\pgfplotsset{compat=1.17}
\pgfplotsset{my style/.append style={axis x line=middle, axis y line=
middle, xlabel={$x$}, ylabel={$y$}, axis equal }}
\theoremstyle{plain}
\newtheorem{prop}{Proposition}[section]
\newtheorem{cor}[prop]{Corollary}
\newtheorem{lem}[prop]{Lemma}
\newtheorem{thm}[prop]{Theorem}
\theoremstyle{definition}
\theoremstyle{plain}
\theoremstyle{definition}
\newcommand{\bC}{\mathbb{C}}
\newcommand{\bR}{\mathbb{R}}
\newcommand{\bZ}{\mathbb{Z}}
\newcommand{\bT}{\mathbb{T}}
\newcommand{\bB}{\mathbb{B}}
\newcommand{\cT}{\mathcal{T}}
\newcommand{\cj}[1]{\overline{#1}}
\newcommand{\iK}{\mathcal{K}}
\newcommand{\pdi}[1]{\langle #1 \rangle}
\newcommand{\spc}{\operatorname{sp}}
\newcommand{\spn}{\operatorname{span}}
\newcommand{\ka}{\kappa}
\newcommand{\cmp}[1]{#1^\mathrm{C}}
\newcommand{\Jt}{J_\theta}
\newcommand{\Jtc}{J_{\cmp{\theta}}}
\newcommand{\cA}{\mathcal{A}}
\title{
Commutative Banach Algebras Generated by Toeplitz Operators on the Bergman Space}
\author{Miguel Angel Rodriguez Rodriguez
}
\begin{document}

\maketitle

\begin{abstract}
We present and study commutative Banach algebras
generated by Toeplitz operators with generalized quasi-radial
pseudo-homogeneous symbols
acting on the Bergman space over the unit ball.
We develop the Gelfand theory of these algebras
and give some structural information about them.
In particular,
we provide a description of the radical
of these algebras.
This paper generalizes and completes
the results from previous works
related to Toeplitz operators with
quasi-radial quasi-homogeneous symbols.
\vspace{1mm}

\end{abstract}
\section{Introduction}

Let $\bB^n$ denote the unit ball of $\bC^n$ and
for $\lambda>-1$ consider the Bergman space $\cA^2_\lambda(\bB^n)$,
defined as the subspace of analytic functions of $L^2(\bB^n,dv_\lambda)$, where
\[
dv_\lambda(z)=c_\lambda(1-|z|^2)^\lambda dv(z)
\]
is a probability measure with
$c_\lambda$ a normalizing constant given by \eqref{normalizing constant}
and $dv$ the standard Lebesgue measure.
Let $B_\lambda$ be the orthogonal projection from $L^2(\bB^n,dv_\lambda)$ onto the Bergman space $\cA^2_\lambda(\bB^n)$.
Given a bounded measurable function $\varphi\in L^\infty(\bB^n)$, we define the Toeplitz operator with symbol $\varphi$ by
\[
T_\varphi f=B_\lambda(\varphi f),\quad f\in\cA^2_\lambda(\bB^n).
\]

The study of algebras generated by Toeplitz operators have been an interesting and important part of mathematics in the recent years.
Since not much can be said about 
these algebras in all its generality, one usually considers Toeplitz operators whose symbols satisfy special properties.

As it turns out,
the underlying geometry of $\bB^n$ is related to the structure of Toeplitz algebras.
Indeed, as it was shown in
\cite{Vasilevski2007},
given a maximal abelian subgroup of biholomorphisms of $\bB^n$,
the set of all bounded measurable symbols invariant under this group
gives rise to commutative $C^*$ algebras on the Bergman space $\cA^2_\lambda(\bB^n)$ for every weight parameter $\lambda>-1$.

Afterwards and quite unexpectedly,
it was shown in \cite{Vasilevski2010,Vasilevski20102,Vasilevski2012,Vasilevski2017,Bauer2012h,Bauer2012n}
that some sets of symbols,
subordinated to the above maximal abelian subgroups, induce Banach algebras of operators that
are commutative for every
weighted Bergman space $\cA^2_\lambda(\bB^n)$ and such that the $C^*$ algebra generated by them is no longer commutative.

These algebras represent an interesting object of study and the main tool to obtain structural information from them is the Gelfand theory.
A first step towards the understanding of these algebras was done in \cite{Bauer2012,Bauer2013,Bauer2015},
where the authors studied Banach algebras
generated by Toeplitz operators with quasi-radial quasi-homogeneous symbols.

In this paper introduce a generalized version of these symbols and study the structure of the corresponding commutative Banach algebras by means of the Gelfand theory. In particular, we describe the radical of these algebras, a problem which had remained open in the previous works.

In Section \ref{sec: Preliminaries} we fix some notations, introduce the spaces and objects to be used and recall some important facts from previous works.
In Section \ref{sec: Gengeralized pseudo}
we consider a generalization of the quasi-homogeneous symbols introduced in \cite{Vasilevski2010} and describe the action of the induced Toeplitz operators.
The unital Banach algebra generated by these operators is studied in Section \ref{sec: The algebra Tc}.

In Section \ref{Pseudo-Gelfand} we present some of the main results of this paper.
We describe the maximal ideal space and the Gelfand transform of the corresponding algebra.

As an application of these results,
in Section \ref{sec: Spectral invariance} we briefly present a criterion to decide whether these algebras are spectral invariant
and in Section \ref{sec: Radical}
we describe the radical of these algebras,
generalizing and extending the results from \cite{Bauer2012,Bauer2013,Bauer2015}.

\section{Preliminaries}
\label{sec: Preliminaries}
Let $\bB^n$ be the unit ball of $\bC^n$.
Let $\cA_\lambda^2(\bB^n)$ be the classical
weighted Bergman space with parameter $\lambda>-1$
defined as the space of analytic functions of $L_2(\bB^n,dv_\lambda)$,
where
\[
dv_\lambda(z)=c_\lambda(1-|z|^2)^\lambda dv(z),
\]
is a probability measure with
$dv$ being the usual Lebesgue measure in $\bB^n$
and $c_\lambda$ a normalizing constant given by
\begin{equation}
\label{normalizing constant}
c_\lambda=\frac{\Gamma(n+\lambda+1)}{\pi^n\Gamma(\lambda+1)}.
\end{equation}
We will denote the inner product of $\cA_\lambda^2(\bB^n)$ by $\pdi{\cdot,\cdot}_{\cA_\lambda^2(\bB^n)}$ or simply $\pdi{\cdot,\cdot}$, if there is no confusion.

It is well known that $\cA_\lambda^2(\bB^n)$ is a
reproducing kernel Hilbert space and that the normalized monomials $(e_\alpha)_{\alpha\in\bZ_+^n}$ form an orthonormal basis, where
\begin{equation}\label{definition ealpha}
e_\alpha(z)=\sqrt{\frac{\Gamma(n+|\alpha|+\lambda+1)}{\alpha!\Gamma(n+\lambda+1)}}z^\alpha.
\end{equation}

We will simply write $\cA^2(\bB^n):=\cA^2_0(\bB^n)$ for the unweighted Bergman space.

Let $B_\lambda\colon L_2(\bB^n,dv_\lambda)\to \cA_\lambda^2(\bB^n)$
be the orthogonal projection called the \emph{Bergman projection}.
For a function $\varphi\in L_\infty(\bB^n)$, we define the
\emph{Toeplitz operator $T_\varphi$ with symbol $\varphi$} by
\[
T_\varphi(f)=B_\lambda(\varphi f),\quad f\in\cA_\lambda^2(\bB^n).
\]

For a given integer $m\in\{1,\ldots,n\}$
we choose a tuple
$k=(k_1,\ldots,k_m)\in\bZ_+^m$ with
$k_1\leq k_2\leq\ldots\leq k_m$ and $k_1+\cdots+k_m=n$.

We write $\bC^n=\bC^{k_1}\times\cdots\times\bC^{k_m}$
and let
\[
z=(z_{(1)},\ldots,z_{(m)})\in\bC^n,
\quad\text{with}\quad
z_{(j)}=(z_{j,1},\ldots,z_{j,k_j})\in\bC^{k_j}.
\]

In general, given any $n$-tuple $u$, we will write
\[
u=(u_1,\ldots,u_n)=(u_{(1)},\ldots,u_{(m)}),
\]
with
\[
u_{(j)}=(u_{j,1},\ldots,u_{j,k_j}),\quad j=1,\ldots,m,
\]
and
\begin{align*}
u_{1,1}&=u_1,\enspace u_{1,2}=u_2,\enspace\ldots,
\enspace u_{1,k_1}=u_{k_1}\\
u_{2,1}&=u_{k_1+1},\enspace u_{2,2}=u_{k_2+2},\enspace\ldots,
\enspace u_{2,k_2}=u_{k_1+k_2},\enspace\ldots,
\enspace u_{m,k_m}=u_{n}.
\end{align*}

We represent each coordinate of $z\in\bB^n$ in the form
\[
z_{j,l}=|z_{j,l}|t_{j,l},
\]
where $t_{j,l}$ belongs to the torus $\bT=S^1$.
For each portion $z_{(j)}$ of $z\in\bC^n$, $j=1,\ldots,m$
we introduce its ``common'' radius
\[
r_j=\sqrt{|z_{j,1}|^2+\cdots+|z_{j,k_j}|^2}
\]
and represent the coordinates of $z_{(j)}$ in the form
\[
z_{j,l}=r_js_{j,l}t_{j,l},\quad\text{where}\quad
l=1,\ldots,k_j
\]
and
\[
s_{(j)}=(s_{j,1},\ldots,s_{j,k_j})
\in S_+^{k_j-1},
\]
where $S_+^{k_j-1}=S^{k_j-1}\cap \bR_+^{k_j}$. That is, $S_+^{k_j-1}$
denotes the set of elements of the unit sphere in $\bR^{k_j}$
with non-negative entries.

A bounded measurable function $a=a(z)$, $z\in\bB^n$ is called
\emph{$k$-quasi-radial} if it depends only on $r_1,\ldots,r_m$.

For each $j\in\{1,\ldots,m\}$ let $c_j\in L_\infty(S_+^{k_j-1}\times\bT^{k_j})$ be a function invariant under the action of $\bT$ on $\bT^{k_j}$ given by $g\cdot t_{(j)}=(gt_{j,1},\ldots,gt_{j,k_j})$,
$g\in\bT,t_{(j)}\in\bT^{k_j}$. That is,
\begin{equation}\label{definition cj}
c_j(s_{(j)},g\cdot t_{(j)})=c_j(s_{(j)},t_{(j)}),\quad\forall (s_{(j)},t_{(j)})\in S_+^{k_j-1}\times\bT^{k_j},g\in\bT.
\end{equation}
We call such functions $c_j$ \emph{generalized pseudo-homogeneous}.
We note that this idea generalizes that of quasi-homogeneous and pseudo-homogeneous functions
presented in \cite{Vasilevski2010} and \cite{Vasilevski2017}, respectively.

Indeed, we obtain the pseudo-homogeneous functions by letting
$c_j(s_{(j)},t_{(j)})=b_j(s_{(j)})t^{p_{(j)}}$,
where $b_j$ is a bounded measurable function defined on $S_+^{k_j-1}$ and $p_{(j)}\in\bZ^{k_j}$ is such that
$|p_{(j)}|=p_{j,1}+\cdots +p_{j,k_j}=0$
obtain the quasi-homogeneous functions if, in addition, we let
\[
b_j(s_{(j)})=s_{j,1}^{|p_{j,1}|}\cdots s_{j,k_j}^{|p_{j,k_j}|}.
\]

\subsection{The algebra $\cT_\lambda(L_{k-qr}^\infty)$}
\label{Algebra T k-qr}

We recall and reproduce some facts and definitions from
\cite{Bauer2013},
\cite{Bauer2015}
and
\cite{Vasilevski2010}
that will be necessary.

The action of a Toeplitz operator with a $k$-quasi-radial symbol is given by the following lemma.
\begin{lem}[{{\cite[Lemma 3.1]{Vasilevski2010}}}]
Given a $k$-quasi-radial function $a=a(r_1,\ldots,r_m)$,
we have
\[
T_az^\alpha=\gamma_{a,k,\lambda}(\alpha)z^\alpha,\quad \alpha\in\bZ_+^n,
\]
where
\begin{align*}
\gamma_{a,k,\lambda}(\alpha)&=\widetilde{\gamma_{a,k,\lambda}}(|\alpha_{(1)}|,\ldots,|\alpha_{(m)}|)\\
&=\frac{2^m\Gamma(n+|\alpha|+\lambda+1)}{\Gamma(\lambda+1)\prod_{j=1}^m(k_j-1+|\alpha_{(j)}|)!}
\int_{\tau(\bB^m)}a(r_1,\ldots,r_m)(1-|r|^2)^\lambda\\
&\times\prod^m_{j=1}r_j^{2|\alpha_{(j)}|+2k_j-1}dr_j,
\end{align*}
where $\tau(\bB^m)$ denotes the base of $\bB^m$ as a Reinhardt domain, that is,
\[
\tau(\bB^m)=\{(r_1,\ldots,r_m)\in\bR_+^m\colon r_1^2+\cdots+r_m^2\leq1\}.
\]
\end{lem}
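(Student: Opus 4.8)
The plan is to compute $T_a z^\alpha$ directly from the definition $T_a z^\alpha = B_\lambda(a z^\alpha)$ by exploiting the orthogonality of the monomial basis $(e_\beta)_{\beta\in\bZ_+^n}$. Since $a$ depends only on the radii $r_1,\dots,r_m$ and hence is invariant under the full torus action $z\mapsto (t_1 z_1,\dots,t_n z_n)$, the function $a z^\alpha$ has the same $\bT^n$-homogeneity as $z^\alpha$ itself. Consequently $\langle a z^\alpha, z^\beta\rangle_\lambda = 0$ whenever $\beta\neq\alpha$, so that $B_\lambda(a z^\alpha)$ is a scalar multiple of $z^\alpha$; call this scalar $\gamma_{a,k,\lambda}(\alpha)$. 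This reduces the lemma to the identity $\gamma_{a,k,\lambda}(\alpha) = \langle a z^\alpha, z^\alpha\rangle_\lambda / \|z^\alpha\|_\lambda^2$, and it remains to evaluate the two integrals.

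First I would write both inner products as integrals over $\bB^n$ against $dv_\lambda$. For the denominator, $\|z^\alpha\|_\lambda^2 = \alpha!\,\Gamma(n+\lambda+1)/\Gamma(n+|\alpha|+\lambda+1)$ is the standard weighted-Bergman normalization already recorded in \eqref{definition ealpha}. For the numerator, I would pass to the ``polar-type'' coordinates adapted to the block structure: in each block $\bC^{k_j}$ write $z_{(j)} = r_j\,\zeta_{(j)}$ with $\zeta_{(j)}\in S^{2k_j-1}$, so that $dv(z_{(j)}) = r_j^{2k_j-1}\,dr_j\,d\sigma(\zeta_{(j)})$. Because $a$ depends only on $(r_1,\dots,r_m)$, the integral factors into a product of sphere integrals $\int_{S^{2k_j-1}} |\zeta_{(j)}|^{2\alpha_{(j)}}\,d\sigma$ — which produce the factor $\prod_j \alpha_{(j)}!\,/\,(k_j-1+|\alpha_{(j)}|)!$ up to a normalization constant — times the radial integral $\int_{\tau(\bB^m)} a(r)(1-|r|^2)^\lambda \prod_j r_j^{2|\alpha_{(j)}|+2k_j-1}\,dr_j$. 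Collecting the constants (the factors of $2^m$ come from the change of variables in the radial integrals $r_j^2\mapsto\,\cdot\,$, or equivalently from the sphere-volume normalization) and dividing by $\|z^\alpha\|_\lambda^2$ gives exactly the claimed formula; in particular the dependence on $\alpha$ enters only through the block sums $|\alpha_{(1)}|,\dots,|\alpha_{(m)}|$, which is the asserted form $\gamma_{a,k,\lambda}(\alpha) = \widetilde{\gamma_{a,k,\lambda}}(|\alpha_{(1)}|,\dots,|\alpha_{(m)}|)$.

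The only genuinely delicate point is the bookkeeping of the normalizing constants through the iterated change of variables — keeping track of $c_\lambda$ from \eqref{normalizing constant}, the Euclidean sphere volumes, and the $2^m$ — so that the ratio collapses to the stated closed form; this is routine but error-prone. Everything else (the vanishing of off-diagonal terms, the factorization over blocks, boundedness of $T_a$) is immediate from the torus invariance of $a$ and Fubini. Since this lemma is quoted verbatim from \cite[Lemma 3.1]{Vasilevski2010}, I would in practice simply cite that reference; the above is the computation one would reproduce if a self-contained argument were wanted.
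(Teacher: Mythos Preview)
The paper does not give a proof of this lemma at all: it is stated with the citation \cite[Lemma~3.1]{Vasilevski2010} and used as a known fact. Your proposal correctly anticipates this (you say you would simply cite the reference in practice), and the self-contained computation you sketch---torus invariance to kill off-diagonal terms, block-polar coordinates $z_{(j)}=r_j\zeta_{(j)}$, then sphere integrals producing $\alpha_{(j)}!/(k_j-1+|\alpha_{(j)}|)!$ times the radial integral over $\tau(\bB^m)$---is exactly the standard argument behind the cited result, so there is nothing to correct.
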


Let $\cT_\lambda(L_{k-qr}^\infty)$ be the $C^*$-algebra
generated by all Toeplitz operators with $k$-quasi-radial symbols.
This is an infinitely generated algebra, whose elements are diagonal operators with respect to the orthonormal basis
$\{e_\alpha\}_{\alpha\in\bZ_+^n}$ such that their eigenvalue sequences
depend only on the tuples $(|\alpha_{(1)}|,\ldots,|\alpha_{(m)}|)$.

By identifying each operator
in
$\cT_\lambda(L_{k-qr}^\infty)$
with its sequence of eigenvalues we may interpret this space
as an algebra
bounded sequences indexed by $\bZ_+^m$.
Moreover, in \cite{Bauer2012,Bauer2013} it was shown that
$\cT_\lambda(L_{k-qr}^\infty)$ can be embedded into a smaller
algebra of slowly oscillating (in a certain sense) sequences.

For each $\kappa=(\kappa_1,\ldots,\kappa_m)\in\bZ_+^m$
consider the finite dimensional subspace $H_\kappa$ of
$\cA_\lambda^2(\bB^n)$ defined by
\begin{equation}
\label{Definition Hkappa}
H_\kappa:=\spn\{e_\alpha\colon\alpha\in\bZ_+^n,\enspace
|\alpha_{(j)}|=\kappa_j,
\quad
j=1,\ldots,m\}.
\end{equation}
For each $j\in\{1,\ldots,m\}$ and $d\in\bZ_+$, we define
the subspace of $\cA_\lambda^2(\bB^n)$:
\[
H_d^{(j)}=\cj{\spn}\{e_\alpha\colon
|\alpha_{(j)}|=d\}\subset\cA_\lambda^2(\bB^n).
\]
The following relations hold, where all sums are orthogonal
\[
H_\ka=\bigcap_{j=1}^mH_{\ka_j}^{(j)},\quad
H_d^{(j)}=\bigoplus_{\ka_j=d}H_\ka,\quad\text{and}\quad
\cA_\lambda^2(\bB^n)=\bigoplus_{|\ka|=0}^\infty H_\ka.
\]
Let $P_\ka$ and $Q_d^{(j)}$ denote the orthogonal projections of
$\cA_\lambda^2(\bB^n)$ onto $H_\ka$ and $H_d^{(j)}$,
respectively. We note that
\[
P_\ka=\prod_{j=1}^mQ_{\ka_j}^{(j)}\quad\text{and}\quad
Q_d^{(j)}=\bigoplus_{\ka_j=d}P_\ka.
\]

\begin{thm}[{{\cite{Bauer2012,Bauer2013,Bauer2015}}}]
Let $\lambda>-1$. For all $\ka\in\bZ_+^m$,
$j\in\{1,\ldots,m\}$, and all $d\in\bZ_+$,
the orthogonal projections $P_\ka$ and $Q_d^{(j)}$
belong to the algebra $\cT_\lambda(L_{k-qr}^\infty)$.
\end{thm}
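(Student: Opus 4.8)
The plan is to show that each $P_\ka$ lies in $\cT_\lambda(L_{k-qr}^\infty)$ by exhibiting it as a limit, in operator norm, of polynomials in Toeplitz operators with $k$-quasi-radial symbols; the corresponding claim for $Q_d^{(j)}$ then follows immediately from the relation $Q_d^{(j)}=\bigoplus_{\ka_j=d}P_\ka$, once we know the latter converges. Since every operator in $\cT_\lambda(L_{k-qr}^\infty)$ is diagonal with respect to $\{e_\alpha\}_{\alpha\in\bZ_+^n}$ with eigenvalue depending only on $(|\alpha_{(1)}|,\ldots,|\alpha_{(m)}|)$, it suffices to work on the sequence side: we must produce, for a fixed $\ka\in\bZ_+^m$, a sequence of functions built algebraically from eigenvalue sequences $\widetilde{\gamma_{a,k,\lambda}}$ that converges uniformly on $\bZ_+^m$ to the indicator $\mathbf 1_{\{\ka\}}$.

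The key steps I would carry out, in order: First, specialize the lemma above to convenient $k$-quasi-radial symbols. Taking $a$ to depend only on the single radius $r_j$ — for instance $a(r)=r_j^2$, equivalently $a=|z_{(j)}|^2$ — gives a Toeplitz operator whose eigenvalue on $e_\alpha$ is an explicit ratio of Gamma factors that is a strictly increasing, bounded function of $d:=|\alpha_{(j)}|$ alone (the integral in the lemma becomes a Beta-type integral and the whole expression simplifies to something like $\dfrac{k_j+d}{n+|\alpha|+\lambda+1}$-type factors; the precise closed form is a routine computation). Call this operator $A_j$ and its eigenvalue $\beta_j(d)$, $d\in\bZ_+$. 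Second, observe $A_1,\ldots,A_m$ are commuting self-adjoint operators in the $C^*$-algebra, and their joint eigenvalue on $H_\ka$ is $(\beta_1(\ka_1),\ldots,\beta_m(\ka_m))$; because each $\beta_j$ is injective, the map $\ka\mapsto(\beta_1(\ka_1),\ldots,\beta_m(\ka_m))$ is injective on $\bZ_+^m$. Third, use a separation/Weierstrass argument: the point $p_\ka:=(\beta_1(\ka_1),\ldots,\beta_m(\ka_m))$ is isolated in the joint spectrum from the remaining joint eigenvalues in the sense needed — here one must be a little careful, since $\bZ_+^m$ is infinite and the $\beta_j(d)$ accumulate. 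The cleanest route is to handle one coordinate at a time: for fixed $j$, since $\beta_j$ is strictly monotone and converges, $\{\beta_j(d):d\in\bZ_+\}$ has a single accumulation point $\beta_j(\infty)$, and one constructs polynomials $q_{j,N}$ with $q_{j,N}(\beta_j(\ka_j))=1$, $q_{j,N}(\beta_j(d))\to 0$ uniformly for $d\ne\ka_j$ (an explicit Lagrange-type interpolation against finitely many values plus a damping factor vanishing at $\beta_j(\infty)$ does the job). Then $q_{j,N}(A_j)\to Q_{\ka_j}^{(j)}$ in norm, and the product $\prod_{j=1}^m q_{j,N}(A_j)\to\prod_j Q_{\ka_j}^{(j)}=P_\ka$ in norm, placing $P_\ka$ in the $C^*$-algebra. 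Finally, for $Q_d^{(j)}$: it already equals $q_{j,N}(A_j)$-limit as above, so it is directly in $\cT_\lambda(L_{k-qr}^\infty)$; alternatively the orthogonal sum $\bigoplus_{\ka_j=d}P_\ka$ converges in norm because the summands act on mutually orthogonal subspaces and the operator is bounded by $1$.

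The main obstacle is the third step: establishing the uniform (not merely pointwise) convergence of the approximating polynomials in the presence of the accumulation point $\beta_j(\infty)$ of the eigenvalues. One cannot simply interpolate $\mathbf 1_{\{\ka_j\}}$ by a single polynomial on an infinite set of nodes; the resolution is to exploit that $\beta_j$ is strictly monotone and $\beta_j(d)-\beta_j(\infty)$ decays at a controlled (Gamma-ratio) rate, so that a fixed Lagrange interpolation on $\{\beta_j(0),\ldots,\beta_j(N)\}$ multiplied by a high power of a linear factor vanishing at $\beta_j(\infty)$ drives the tail $d>N$ uniformly to zero while pinning the finitely many initial values; letting $N\to\infty$ then gives norm convergence. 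This is exactly the mechanism already used in \cite{Bauer2012,Bauer2013,Bauer2015}, and I would either cite it or reproduce the short interpolation estimate. Everything else — the Gamma-function simplifications in Step 1, the commutativity and the identification of joint eigenspaces in Step 2, and the bookkeeping for $Q_d^{(j)}$ in the last step — is routine.
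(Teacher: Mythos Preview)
The paper gives no proof of this theorem at all; it is quoted verbatim from \cite{Bauer2012,Bauer2013,Bauer2015} and used as input for the rest of the article. So there is nothing in the paper to compare against beyond the bare citation, and the relevant question is whether your outline is internally sound. It is not, for two reasons.

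\textbf{Gap 1 (Steps 1--2).} Your own displayed formula already exposes the problem: for $a=|z_{(j)}|^2$ the eigenvalue of $T_a$ on $e_\alpha$ is
\[
\gamma_{|z_{(j)}|^2}(\alpha)=\frac{k_j+|\alpha_{(j)}|}{\,n+|\alpha|+\lambda+1\,},
\]
which depends on the \emph{total} degree $|\alpha|=|\alpha_{(1)}|+\cdots+|\alpha_{(m)}|$, not on $d=|\alpha_{(j)}|$ alone. Hence there is no function $\beta_j\colon\bZ_+\to\bR$ with $A_je_\alpha=\beta_j(|\alpha_{(j)}|)\,e_\alpha$, and the one-variable interpolation scheme you set up in Step~3 --- polynomials $q_{j,N}$ with $q_{j,N}(A_j)\to Q^{(j)}_{\ka_j}$ in norm --- cannot even be formulated. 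The joint eigenvalue map $\ka\mapsto\bigl(\tfrac{k_1+\ka_1}{n+|\ka|+\lambda+1},\ldots,\tfrac{k_m+\ka_m}{n+|\ka|+\lambda+1}\bigr)$ is injective, so a genuinely multivariable polynomial approximation might still recover $P_\ka$; but your ``one coordinate at a time'' reduction, and with it your direct route to $Q^{(j)}_d$, collapses.

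\textbf{Gap 2 (final step).} Your fallback for $Q^{(j)}_d$ --- that $\bigoplus_{\ka_j=d}P_\ka$ ``converges in norm because the summands act on mutually orthogonal subspaces and the operator is bounded by $1$'' --- is simply false. An infinite orthogonal sum of nonzero projections converges only in the strong operator topology: any nonempty finite tail $\sum_{\ka\in F}P_\ka$ has norm exactly $1$, so the partial sums are never norm-Cauchy. Since $\cT_\lambda(L_{k-qr}^\infty)$ is a $C^*$-algebra (norm-closed, not SOT-closed), this does not place $Q^{(j)}_d$ in it. In the references \cite{Bauer2012,Bauer2013,Bauer2015} the projections $Q^{(j)}_d$ and $P_\ka$ are obtained directly, by exhibiting their (indicator) eigenvalue sequences as uniform limits of eigenvalue sequences of Toeplitz operators with carefully chosen $k$-quasi-radial symbols, not by summing infinitely many $P_\ka$'s.
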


Let $M(\cT_\lambda(L_{k-qr}^\infty))$ be the compact set of maximal
ideals of $\cT_\lambda(L_{k-qr}^\infty)$.
We describe a stratification of $M(\cT_\lambda(L_{k-qr}^\infty))$ presented in \cite{Bauer2013,Bauer2015}.

Consider tuples $\theta=(\theta_1,\ldots,\theta_m)\in\{0,1\}^m$
with entries $0$ and $1$.
We write $\mathbf{1}:=(1,\ldots,1)\in\{0,1\}^m=:\Theta$, and denote by
$\cmp{\theta}=\mathbf{1}-\theta$ the tuple \emph{complementary}
to $\theta$.

Then, with $J_\theta=\{j\in\{1,\ldots,m\}\colon \theta_j=1\}$,
we introduce
\[
\bZ_+^\theta=\bigoplus_{j\in J_\theta}\bZ_+(j),
\]
where $\bZ_+(j)$ denotes a copy of $\bZ_+$ indexed by $j$.
Thus, its elements will be tuples of the form
\[
\ka_\theta=
(\ka_{j_1},\ldots,\ka_{j_{|\theta|}}),
\quad j_p\in J_\theta.
\]

For each $\theta\in\Theta$ we define a set $M_\theta$
of multiplicative functionals by
\[
M_\theta=
\left\{
\mu\in M(\cT_\lambda(L_{k-qr}^\infty))\colon
\mu(Q_d^{(j)})=
\begin{cases}
    0\text{ for all }&d\in\bZ_+,\text{ if }\theta_j=0.\\
    1\text{ for some }&d\in\bZ_+,\text{ if }\theta_j=1.
\end{cases}
\right\}.
\]

Thus $M_\theta$ is the set of all functionals in
$M(\cT_\lambda(L_{k-qr}^\infty))$ that are reached by nets
$\{\ka^\alpha\}_\alpha$ in $\bZ_+^m$ such that
$\ka^\alpha_j$ tends to infinity if and only if $\theta_j=0$.

All sets $M_\theta$ are disjoint and
\[
M(\cT_\lambda(L_{k-qr}^\infty))=\bigcup_{\theta\in\Theta}M_\theta.
\]
Moreover, $M_{\mathbf{1}}$ can be identified with $\bZ_+^m$ in a natural way,
and one can easily see that all functionals from
$M_\infty:=M(\cT_\lambda(L_{k-qr}^\infty))\backslash M_{\mathbf{1}}$
map compact operators of the algebra $\cT_\lambda(L_{k-qr}^\infty)$
to zero.

We recall as well that for each $\mu\in M_\theta$,
with $\theta\neq\mathbf{1}$,
there is a unique tuple $\ka_\theta\in\bZ_+^\theta$
(representing the "finite coordinates" of $\mu$)
such that $\mu(Q_{\ka_j}^{(j)})=1$ for all $j\in\Jt$.
This implies the following further decomposition of $M_\theta$
into disjoint sets
\[
M_\theta=\bigcup_{\ka_\theta\in\bZ_+^\theta}
M_\theta(\ka_\theta),
\]
where
$M_\theta(\ka_\theta)=
\{
\mu\in M_\theta\colon\mu(Q_{\ka_j}^{(j)})=1
\text{ for all }j\in\Jt
\}$.

\section{Generalized pseudo-homogeneous symbols}
\label{sec: Gengeralized pseudo}
For every $j\in\{1,\ldots,m\}$, let \[
d\mu_j=(2\pi)^{-k_j}\frac{dt_{j,1}}{it_{j,1}}\times\cdots\times\frac{dt_{j,k_j}}{it_{j,k_j}}
\]
be the invariant measure of $\bT^{k_j}$.
We also denote by $d\mu$ the invariant measure of $\bT^n$, which clearly satisfies
\[
d\mu=d\mu_1\times\cdots\times d\mu_{m}.
\]

\begin{lem}
Let $c_j$ be a generalized pseudo-homogeneous symbol as before. If $p_{(j)}\in\bZ^{k_j}$ with $|p_{(j)}|\neq0$ and $s_{(j)}\in S^{k_j-1}_+$, then
\[
\int_{\bT^{k_j}}c_j(s_{(j)},t_{(j)})t_{(j)}^{p_{(j)}}d\mu_j=0.
\]
\end{lem}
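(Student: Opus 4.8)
The plan is to exploit the $\bT$-invariance \eqref{definition cj} of $c_j$ together with the invariance of the Haar measure $d\mu_j$ under the diagonal rotation $t_{(j)}\mapsto g\cdot t_{(j)}$, $g\in\bT$. Write $I(s_{(j)})$ for the integral in question; note that it is well defined for (a.e.) $s_{(j)}\in S_+^{k_j-1}$, since $c_j\in L_\infty(S_+^{k_j-1}\times\bT^{k_j})$ and $d\mu_j$ is a probability measure, so $c_j(s_{(j)},\cdot)\,t_{(j)}^{p_{(j)}}\in L_1(\bT^{k_j},d\mu_j)$ and all the manipulations below are justified.

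First I would record that for each fixed $g\in\bT$ the change of variables $t_{j,l}\mapsto gt_{j,l}$, $l=1,\ldots,k_j$, leaves $d\mu_j$ unchanged, since $\frac{d(gt_{j,l})}{i\,gt_{j,l}}=\frac{dt_{j,l}}{i\,t_{j,l}}$ for every factor (equivalently, $t_{(j)}\mapsto g\cdot t_{(j)}$ is a translation on the group $\bT^{k_j}$). Applying this substitution to $I(s_{(j)})$ gives
\[
I(s_{(j)})=\int_{\bT^{k_j}}c_j(s_{(j)},g\cdot t_{(j)})\,(g\cdot t_{(j)})^{p_{(j)}}\,d\mu_j.
\]
Then I would use the bookkeeping $(g\cdot t_{(j)})^{p_{(j)}}=\prod_{l=1}^{k_j}(gt_{j,l})^{p_{j,l}}=g^{|p_{(j)}|}\,t_{(j)}^{p_{(j)}}$, together with $c_j(s_{(j)},g\cdot t_{(j)})=c_j(s_{(j)},t_{(j)})$ from \eqref{definition cj}, and pull the scalar $g^{|p_{(j)}|}$ out of the integral to obtain
\[
I(s_{(j)})=g^{|p_{(j)}|}\,I(s_{(j)}),\qquad g\in\bT.
\]

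Finally, since $|p_{(j)}|\neq0$, I may pick any $g\in\bT$ that is not a $|p_{(j)}|$-th root of unity, so that $g^{|p_{(j)}|}\neq 1$ and hence $I(s_{(j)})=0$; alternatively, integrating the identity $I(s_{(j)})=g^{|p_{(j)}|}I(s_{(j)})$ against the normalized Haar measure $\frac{dg}{2\pi i g}$ on $\bT$ and using $\int_{\bT}g^{|p_{(j)}|}\,\frac{dg}{2\pi i g}=0$ for $|p_{(j)}|\neq0$ yields the same conclusion. There is essentially no obstacle here: the only two points needing a line of care are the invariance of $d\mu_j$ under the diagonal rotation and the identity $(g\cdot t_{(j)})^{p_{(j)}}=g^{|p_{(j)}|}t_{(j)}^{p_{(j)}}$, both immediate from the definitions; the lemma is just the classical averaging/orthogonality argument on the torus.
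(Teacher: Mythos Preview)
Your proof is correct and follows essentially the same approach as the paper: a diagonal rotation $t_{(j)}\mapsto g\cdot t_{(j)}$, invariance of $c_j$ and of $d\mu_j$, and the resulting relation $I=g^{|p_{(j)}|}I$ forcing $I=0$ when $|p_{(j)}|\neq0$. You simply spell out a few routine details (well-definedness, the identity $(g\cdot t_{(j)})^{p_{(j)}}=g^{|p_{(j)}|}t_{(j)}^{p_{(j)}}$, and the concluding step) that the paper leaves implicit.
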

\begin{proof}
Let $g\in\bT$ be arbitrary. Consider the change of variables $t_{(j)}'=g\cdot t_{(j)}$. By the invariance of $c_j$ and of the measure we have
\[
\int_{\bT^{k_j}}
c_j(s_{(j)},t'_{(j)})
(t'_{(j)})^{p_{(j)}}d\mu_j(t'_{(j)})
=
g^{|p_{(j)}|}
\int_{\bT^{k_j}}
c_j(s_{(j)},t_{(j)})
(t_{(j)})^{p_{(j)}}
d\mu_j(t_{(j)}).
\]
Thus either the integrals are zero or $|p_{(j)}|=0$.
\end{proof}

Given $p_{(j)}\in\bZ^{k_j}$ we denote by $\widetilde{p_{(j)}}$ the element of $\bZ^n=\bZ^{k_1}\times\cdots\times\bZ^{k_m}$ given by
\[
\widetilde{p_{(j)}}=(0,\ldots,0,p_{(j)},0,\ldots,0).
\]
Here the tuple $p_{(j)}$ is located at the $j$-position.

For a positive integer $p$ we define the set
\[
\Delta_{p}=\{(r_1,\ldots,r_p)\in\bR_+^p\colon r_1+\cdots+r_p\leq1\}.
\]
Imitating the calculations from the proof of Lemma 3.2 from \cite{Vasilevski2017} and using the above lemma we describe the action of a Toeplitz operator with $k$-quasi-radial quasi-homogeneous
symbols:
\begin{prop}\label{action Tac}
Let $a$ be a $k$-quasi-radial function and $c=\prod_{j=1}^mc_j$. Then:
\begin{enumerate}
    \item For any $\alpha,\beta\in\bZ_+^n$,
$\pdi{T_{ac}(z^\alpha),z^\beta}=0$ if $|\alpha_{(j)}|\neq|\beta_{(j)}|$ for some $j\in\{1,\ldots,m\}$.

    \item For every $p\in\bZ^{n}$ with $|p_{(j)}|=0$, $j=1,\ldots,m$, we have
\begin{align*}
\pdi{&T_{ac}(z^\alpha),z^{\alpha+p}}\\
&=\frac{\Gamma(n+|\alpha|+\lambda+1)}{\Gamma(\lambda+1)\prod^m_{j=1}\Gamma(|\alpha_{(j)}|+k_j)}\\
&\times\int_{\Delta_m}a(\sqrt{r_1},\ldots,\sqrt{r_m})
\prod_{j=1}^mr_j^{|\alpha_{(j)}|+k_j-1}(1-(r_1+\ldots+r_m))^\lambda dr_1\cdots dr_m\\
&\times\prod_{j=1}^m
\frac{\Gamma(|\alpha_{(j)}|+k_j)}
{\prod_{\ell=1}^{k_j}\Gamma(\alpha_{j,\ell}+p_{j,\ell}+1)}
\int_{\Delta_{k_j-1}}\hat{c}(s^{1/2}_{(j)},p_{(j)})
\prod_{\ell=1}^{k_j-1}s_{j,\ell}^{\alpha_{j,\ell}+\frac{1}{2}p_{j,\ell}}\\
&\times (1-(s_{j,1}+\ldots+s_{j,k_j-1}))^{\alpha_{j,k_j}+\frac{1}{2}p_{j,k_j}}
ds_{j,1}\cdots ds_{j,k_j-1},
\end{align*}
where $s_{(j)}^{1/2}=(s_{j,1}^{1/2},\ldots,s_{j,k_j}^{1/2})$
and
$\hat{c}_j(s_{(j)},p_{(j)})$ is the $p_{(j)}$ Fourier coefficient of $c_j(s_{(j)},t_{(j)})$
given by
\[
\hat{c}_j(s_{(j)},p_{(j)})=\int_{\bT^{k_j}}c_j(s_{(j)},t_{(j)})t_{(j)}^{-p_{(j)}}d\mu_j(t_{(j)}),
\quad p_{(j)}\in\bZ^{k_j}.
\]
\end{enumerate}
\end{prop}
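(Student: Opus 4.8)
The plan is to compute $\pdi{T_{ac}(z^\alpha),z^\beta}$ directly from the definition of the Toeplitz operator and the Bergman inner product, passing to polar-type coordinates adapted to the block decomposition $\bC^n=\bC^{k_1}\times\cdots\times\bC^{k_m}$, and then separating the radial, spherical, and toroidal parts of the integral. Writing $z_{j,\ell}=r_j s_{j,\ell} t_{j,\ell}$ as in the preliminaries, the integrand $a(r_1,\ldots,r_m)\,c(s,t)\,z^\alpha\,\cj{z^\beta}$ factors into: a product of powers of the $r_j$ together with the weight $(1-|z|^2)^\lambda$ and the Jacobian, which depends only on the $r_j$ (here I would use the standard identity reducing $\int_{\tau(\bB^m)}$ over common radii to an integral over $\Delta_m$ after the substitution $r_j\mapsto\sqrt{r_j}$, exactly as in \cite{Vasilevski2010, Vasilevski2017}); a product over $j$ of spherical integrals over $S_+^{k_j-1}$ in the variables $s_{(j)}$; and a product over $j$ of toroidal integrals over $\bT^{k_j}$ in the variables $t_{(j)}$. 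Crucially, $a$ contributes nothing to the $s$- and $t$-integrals, and each $c_j$ contributes only to the $j$-th spherical and toroidal factor, so the whole expression is a product of one ``$a$-part'' over $\Delta_m$ and $m$ independent ``$c_j$-parts''.

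For part (1), I would isolate the toroidal integral: the $t$-dependence of $z^\alpha\cj{z^\beta}$ is $t_{(j)}^{\alpha_{(j)}-\beta_{(j)}}$ in each block, so the $j$-th toroidal factor is $\int_{\bT^{k_j}}c_j(s_{(j)},t_{(j)})\,t_{(j)}^{\alpha_{(j)}-\beta_{(j)}}\,d\mu_j$. If $|\alpha_{(j)}|\neq|\beta_{(j)}|$ for some $j$, then $|\alpha_{(j)}-\beta_{(j)}|\neq0$, and Lemma~2.3 (the preceding lemma, applied with $p_{(j)}=\beta_{(j)}-\alpha_{(j)}$) forces this factor, hence the whole inner product, to vanish. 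Also the $r_j$-integral vanishes unless the powers of $r_j$ coming from $z^\alpha$ and $\cj{z^\beta}$ match in a way compatible with $|\alpha_{(j)}|=|\beta_{(j)}|$, but the toroidal argument already suffices. For part (2), I set $\beta=\alpha+p$ with $|p_{(j)}|=0$ for every $j$; then the $j$-th toroidal factor becomes $\int_{\bT^{k_j}}c_j(s_{(j)},t_{(j)})\,t_{(j)}^{-p_{(j)}}\,d\mu_j=\hat c_j(s_{(j)},p_{(j)})$, the stated Fourier coefficient, which survives precisely because $|p_{(j)}|=0$. What remains is: (i) the $r$-integral, which after the substitution $r_j\mapsto\sqrt{r_j}$ and accounting for the normalizing constant from the monomials $e_\alpha$ yields exactly the Gamma-prefactor $\frac{\Gamma(n+|\alpha|+\lambda+1)}{\Gamma(\lambda+1)\prod_j\Gamma(|\alpha_{(j)}|+k_j)}$ times the integral over $\Delta_m$ of $a(\sqrt{r_1},\ldots,\sqrt{r_m})\prod_j r_j^{|\alpha_{(j)}|+k_j-1}(1-(r_1+\cdots+r_m))^\lambda$; and (ii) for each $j$, the spherical integral over $S_+^{k_j-1}$ of $\hat c_j(s_{(j)},p_{(j)})$ times $\prod_\ell s_{j,\ell}^{\alpha_{j,\ell}+\alpha_{j,\ell}+\text{(conjugate power)}}$, which upon the substitution $s_{j,\ell}^2\mapsto s_{j,\ell}$ and using $\sum_\ell|z_{j,\ell}|^2=r_j^2$ becomes the integral over $\Delta_{k_j-1}$ appearing in the statement, with the combinatorial factor $\frac{\Gamma(|\alpha_{(j)}|+k_j)}{\prod_\ell\Gamma(\alpha_{j,\ell}+p_{j,\ell}+1)}$ emerging from the Beta-type normalization and the relation $|\alpha_{(j)}|=\sum_\ell\alpha_{j,\ell}$.

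I would organize the write-up by first stating the change of variables and the resulting factorization into ``$a$-part'' and ``$c_j$-parts'', then dispatch part (1) in two lines via Lemma~2.3, then carefully track constants in part (2). The main obstacle is purely bookkeeping: correctly matching the normalization $\sqrt{\Gamma(n+|\alpha|+\lambda+1)/(\alpha!\,\Gamma(n+\lambda+1))}$ of $e_\alpha$ against the Gamma factors produced by the radial and spherical integrations, so that the spurious $\Gamma(|\alpha_{(j)}|+k_j)$ in the denominator of the prefactor is cancelled by the one in the numerator of each $c_j$-factor and the $\alpha!=\prod_{j,\ell}\alpha_{j,\ell}!$ in the monomial normalization recombines with $\prod_\ell\Gamma(\alpha_{j,\ell}+p_{j,\ell}+1)=\prod_\ell(\alpha_{j,\ell}+p_{j,\ell})!$ to give exactly the asymmetric form displayed (note the asymmetry in $\alpha$ versus $\alpha+p$ is expected since the inner product pairs $z^\alpha$ with $z^{\alpha+p}$). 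Since the statement explicitly says to imitate the proof of \cite[Lemma 3.2]{Vasilevski2017}, I would reference that computation for the analytic details and focus the exposition on the new ingredient, namely how the general $c_j$ (rather than a monomial symbol $b_j(s_{(j)})t^{p_{(j)}}$) enters only through its Fourier coefficient $\hat c_j(s_{(j)},p_{(j)})$, which is the content of Lemma~2.3.
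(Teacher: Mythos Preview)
Your proposal is correct and matches the paper's approach exactly: the paper's proof consists of the single sentence ``Imitating the calculations from the proof of Lemma 3.2 from \cite{Vasilevski2017} and using the above lemma,'' which is precisely your plan of redoing the polar-coordinate computation from \cite{Vasilevski2017} while invoking the preceding vanishing lemma for the toroidal integrals. Your write-up is in fact more detailed than the paper's, and your identification of the bookkeeping of Gamma factors as the only real obstacle is accurate.
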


Fixing a symbol $c_j$ and letting the other ones be the constant function $1$ in the above proposition we get:
\begin{cor}\label{action Tcj}
Let $j\in\{1,\ldots,m\}$ and consider the Toeplitz operator $T_{c_j}$, where $c_j$ is given as before.
If $|\alpha_{(j)}|\neq|\beta_{(j)}|$ or $\alpha_{(j')}\neq \beta_{(j')}$,
then $\pdi{T_{c_j}z^\alpha,z^\beta}=0$. Otherwise,
for $\alpha\in\bZ_+^n$ and $p_{(j)}\in\bZ_+^{k_j}$
we have
\begin{align*}
\pdi{&T_{c_j}z^\alpha,z^{\alpha+\widetilde{p}_{(j)}}}\\
&
=\frac{\Gamma(|\alpha_{(j)}|+k_j)}
{\prod_{\ell=1}^{k_j}\Gamma(\alpha_{j,\ell}+p_{j,\ell}+1)}
\int_{\Delta_{k_j-1}}\hat{c}(s^{1/2}_{(j)},p_{(j)})
\prod_{\ell=1}^{k_j-1}s_{j,\ell}^{\alpha_{j,\ell}+\frac{1}{2}p_{j,\ell}}\\
&\times (1-(s_{j,1}+\ldots+s_{j,k_j-1}))^{\alpha_{j,k_j}+\frac{1}{2}p_{j,k_j}}
ds_{j,1}\cdots ds_{j,k_j-1}.
\end{align*}
\end{cor}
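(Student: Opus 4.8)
The plan is to deduce Corollary \ref{action Tcj} directly from Proposition \ref{action Tac} by a careful specialization of the symbols, so the only real work is bookkeeping.

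First I would set, in the notation of Proposition \ref{action Tac}, the $k$-quasi-radial factor $a\equiv 1$ and $c_{j'}\equiv 1$ for every $j'\neq j$, keeping only the chosen factor $c_j$. With $a\equiv 1$ the first integral in Proposition \ref{action Tac}, namely
\[
\int_{\Delta_m}\prod_{i=1}^m r_i^{|\alpha_{(i)}|+k_i-1}(1-(r_1+\cdots+r_m))^\lambda\,dr_1\cdots dr_m,
\]
is a Dirichlet integral and evaluates to $\Gamma(\lambda+1)\prod_{i=1}^m\Gamma(|\alpha_{(i)}|+k_i)$ divided by $\Gamma(n+|\alpha|+\lambda+1)$, since $\sum_i(|\alpha_{(i)}|+k_i)+\lambda+1 = n+|\alpha|+\lambda+1$. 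Multiplying by the prefactor $\Gamma(n+|\alpha|+\lambda+1)\big/\big(\Gamma(\lambda+1)\prod_{i=1}^m\Gamma(|\alpha_{(i)}|+k_i)\big)$ cancels everything, leaving the constant $1$. This is the key simplification: the quasi-radial part contributes nothing once $a\equiv 1$.

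Next I would handle the product $\prod_{i=1}^m(\cdots)$ over the pseudo-homogeneous factors. For $i\neq j$ we are forced to take $p_{(i)}=0$ (otherwise $\widetilde{p}_{(j)}$ would not be the displacement, and also by Lemma 3.4 any nonzero-degree contribution vanishes), and with $c_i\equiv 1$ the Fourier coefficient is $\hat c_i(s_{(i)}^{1/2},0)=1$; the remaining factor for index $i$ is
\[
\frac{\Gamma(|\alpha_{(i)}|+k_i)}{\prod_{\ell}\Gamma(\alpha_{i,\ell}+1)}\int_{\Delta_{k_i-1}}\prod_{\ell=1}^{k_i-1}s_{i,\ell}^{\alpha_{i,\ell}}(1-\textstyle\sum s_{i,\ell})^{\alpha_{i,k_i}}\,ds,
\]
which is again a Dirichlet integral equal to $1$ by the same Gamma-function identity. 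Hence all factors with $i\neq j$ drop out, and the $i=j$ factor is exactly the displayed right-hand side of Corollary \ref{action Tcj}. Finally, the orthogonality claims (vanishing when $|\alpha_{(j)}|\neq|\beta_{(j)}|$, and when $\alpha_{(j')}\neq\beta_{(j')}$ for $j'\neq j$) follow from part (1) of Proposition \ref{action Tac} together with the observation that for $j'\neq j$ the only admissible displacement is $p_{(j')}=0$, forcing $\alpha_{(j')}=\beta_{(j')}$.

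I expect no genuine obstacle here; the one point requiring a little care is making explicit that Proposition \ref{action Tac} only computes pairings $\pdi{T_{ac}z^\alpha,z^{\alpha+p}}$ with each $|p_{(i)}|=0$, so one must separately invoke part (1) and Lemma 3.4 to see that components $z^\beta$ with $\beta_{(j')}\neq\alpha_{(j')}$ or with $|\beta_{(j')}|$ arising from a genuinely $m$-dimensional shift contribute nothing; once that is noted, the corollary is just the evaluation of two Dirichlet-type integrals.
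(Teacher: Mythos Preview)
Your approach is exactly the paper's: the corollary is obtained by specializing Proposition~\ref{action Tac} with $a\equiv 1$ and $c_{j'}\equiv 1$ for $j'\neq j$, and you have simply made explicit the two Dirichlet-integral evaluations (and the vanishing of $\hat c_{j'}(\,\cdot\,,p_{(j')})$ for $p_{(j')}\neq 0$) that the paper leaves to the reader. One small remark: the orthogonality when $\alpha_{(j')}\neq\beta_{(j')}$ but $|\alpha_{(j')}|=|\beta_{(j')}|$ is not covered by the lemma on $|p_{(j)}|\neq 0$ (which you cite), but rather by the fact that the Fourier coefficient of the constant $c_{j'}\equiv 1$ at any nonzero $p_{(j')}$ is zero---you do invoke this implicitly, so the argument is fine.
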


It follows from these expressions that for any $k$-quasi-radial function $a$, the operators $T_a$ and $T_{c_j}$, $j=1,\ldots,m$
pairwise commute and
\[
T_{a\prod_{j=1}^m c_j}=T_a\prod_{j=1}^mT_{c_j}.
\]

On the other hand, using the same notation, it follows from Propositions \ref{action Tac} and \ref{action Tcj}
that the operators $T_{c_j}$, acting on $\cA_\lambda^2(\bB^n)=\bigoplus_{\ka\in\bZ_+^m}^\infty H_\ka$,
can be decomposed as a direct sum
\[
T_{c_j}=\bigoplus_{\ka\in\bZ_+^m}T_{c_j}|_{H_\ka},
\]
whereas the operator $T_a$ acts as a constant multiple of the identity operator on each of these subspaces:
\[
T_a=\bigoplus_{\ka\in\bZ_+^m}\gamma_a(|\ka_1|,\ldots,|\ka_m|)I|_{H_\ka}.
\]

Furthermore, we note that by \eqref{definition ealpha},
\[
\|z^{\alpha+\widetilde{p_{(j)}}}\|/\|z^\alpha\|=
\sqrt{
(\alpha_{(j)}+p_{(j)})!/\alpha_{(j)}!
}.
\]
Thus, normalizing the monomials in Corollary \ref{action Tcj}, we see that the action of $T_{c_j}$
depends only on the $(j)$-portion of the involved elements.
That is, it does not depend on the other $k_j$'s nor on the weight parameter $\lambda$.
This suggests us to analyze these operators as follows.

Consider the tensor product of weightless Bergman spaces $\bigotimes_{j=1}^m\cA^2(\bB^{k_j})$ and let
\[
U\colon\cA^2_\lambda(\bB^n)\to\bigotimes_{j=1}^m\cA^2(\bB^{k_j})
\]
be the unitary operator such that
\[
U(e_\alpha)=
e^{(1)}_{\alpha_{(1)}}\otimes\ldots\otimes e^{(m)}_{\alpha_{(m)}},
\quad\alpha\in\bZ_+^n,
\]
where $e^{(j)}_{\alpha_{(j)}}$ denotes the canonical $\alpha_{(j)}$-basic monomial of $\cA^2(\bB^{k_j})$,
$j=1,\ldots,m$.

Since the symbols $c_j$ depend by definition only on the portion $z_{(j)}$,
we can associate to them in the obvious way
unique measurable functions defined on $\bB^{k_j}$.
To avoid notation we will write $c_j$ for both of these functions.

By the above results and observations, we obtain
\[
T_{c_j}=U^*(I\otimes\cdots\otimes\boldsymbol{T_j}\otimes\cdots\otimes I)U,
\]
where $\boldsymbol{T_j}$ denotes the Toeplitz operator with symbol $c_j$
acting on the Bergman space $\cA^2(\bB^{k_j})$.
In particular, we have
\[
\operatorname{sp}(T_{c_j})=\operatorname{sp}(\boldsymbol{T_j}).
\]

Moreover, if for every $d\in\bZ_+$ and $j\in\{1,\ldots,m\}$ we define $\boldsymbol{H}^{(j)}_{d}$ as the
finite-dimensional subspace of
$\cA^2(\bB^{k_j})$ given by
\[
\boldsymbol{H}^{(j)}_{d}=\operatorname{span}\{e^{(j)}_{\alpha_{(j)}}\colon |\alpha_{(j)}|=d\},
\]
then we also have
\begin{equation}\label{sum decomposition Tj}
\boldsymbol{T_j}=\bigoplus_{d\in\bZ_+}\boldsymbol{T_j}|_{\boldsymbol{H}^{(j)}_{d}},\quad j=1,\ldots,m.   
\end{equation}
Note that
$U(H^{(j)}_d)=\cA^2(\bB^{k_1})\otimes\cdots\boldsymbol{H}^{(j)}_{d}\otimes\cdots\otimes \cA^2(\bB^{k_m})$
and $U(H_{\ka})=\boldsymbol{H}^{(1)}_{\ka_1}\otimes\cdots\otimes \boldsymbol{H}^{(m)}_{\ka_m}$.

Let $\boldsymbol{d}=(\cT_\lambda(L^\infty_{k-qr}),c_1,\ldots,c_m)$ and let $\cT_\lambda(\boldsymbol{d})$ be the unital Banach algebra generated by the operators from $\cT_\lambda(L^\infty_{k,qr})$ and by $T_{c_1}$,\ldots,$T_{c_m}$. By our above observations, $\cT_\lambda(\boldsymbol{d})$ is a commutative Banach algebra
for every weight parameter $\lambda>-1$ and, as it was observed in the previous works
regarding quasi-homogeneous and pseudo-homogeneous symbols,
the corresponding $C^*$-algebras generated by $\cT_\lambda(\boldsymbol{d})$ is in general non-commutative, in general.

\section{The algebra $\cT(c_1,\ldots,c_m)$}
\label{sec: The algebra Tc}

Note that, by \eqref{sum decomposition Tj}, the operator $\boldsymbol{T_j}$ decomposes into a direct sum of operators acting on finite-dimensional spaces.
Therefore we can use the same arguments as in Section 3 from \cite{Rodriguez2021}
and obtain similar conclusions as the results therein. Thus we have:
\begin{prop}\label{Tcj spectrum 1}
The point spectrum of $\boldsymbol{T_j}$ is given by
\[
\operatorname{sp}_{pt}(\boldsymbol{T_j})=\displaystyle\bigcup_{d\in\bZ_+}\operatorname{sp}(\boldsymbol{T_j}|_{\boldsymbol{H}^{(j)}_{d}}).
\]
\end{prop}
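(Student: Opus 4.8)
The plan is to use the direct sum decomposition \eqref{sum decomposition Tj} together with the fact that each summand acts on a finite-dimensional space. Write $\boldsymbol{T_j}=\bigoplus_{d\in\bZ_+}A_d$ where $A_d:=\boldsymbol{T_j}|_{\boldsymbol{H}^{(j)}_{d}}$, and note that each $A_d$ is an operator on the finite-dimensional space $\boldsymbol{H}^{(j)}_{d}$; hence $\operatorname{sp}(A_d)=\operatorname{sp}_{pt}(A_d)$ consists of finitely many eigenvalues, and every vector of $\boldsymbol{H}^{(j)}_{d}$ is a finite linear combination of eigenvectors (generalized eigenvectors are not even needed for the point-spectrum statement, only actual eigenvectors).

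First I would prove the inclusion $\bigcup_{d}\operatorname{sp}(A_d)\subseteq\operatorname{sp}_{pt}(\boldsymbol{T_j})$. If $\nu\in\operatorname{sp}(A_d)$ for some $d$, pick a unit eigenvector $v\in\boldsymbol{H}^{(j)}_{d}$ with $A_dv=\nu v$; viewing $v$ as an element of $\cA^2(\bB^{k_j})$ supported in the $d$-th summand, we get $\boldsymbol{T_j}v=\nu v$, so $\nu$ is an eigenvalue of $\boldsymbol{T_j}$. Conversely, suppose $\nu\in\operatorname{sp}_{pt}(\boldsymbol{T_j})$ with eigenvector $w=\sum_{d}w_d$, $w_d\in\boldsymbol{H}^{(j)}_{d}$, $w\neq0$. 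Since the decomposition is orthogonal and $\boldsymbol{T_j}$ preserves each summand, $(\boldsymbol{T_j}-\nu)w=\sum_d(A_d-\nu)w_d=0$ forces $(A_d-\nu)w_d=0$ for every $d$. Because $w\neq0$ there is some $d_0$ with $w_{d_0}\neq0$, and then $w_{d_0}$ is an eigenvector of $A_{d_0}$ for $\nu$, so $\nu\in\operatorname{sp}(A_{d_0})\subseteq\bigcup_d\operatorname{sp}(A_d)$. Combining the two inclusions gives the claimed equality.

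There is essentially no obstacle here; the only point requiring a word of care is the passage between $\operatorname{sp}_{pt}$ and $\operatorname{sp}$ on the finite-dimensional blocks, where they coincide, and the observation that the block-diagonal structure is genuine (each $A_d$ leaves $\boldsymbol{H}^{(j)}_{d}$ invariant because $\boldsymbol{T_j}$ does, as established in Section~\ref{sec: Gengeralized pseudo}). If one wanted to avoid citing \cite{Rodriguez2021}, the argument above is self-contained; the reference is presumably invoked only to also transfer the finer spectral facts (e.g.\ that $\operatorname{sp}(\boldsymbol{T_j})$ is the closure of this union, that each eigenvalue has finite multiplicity, etc.), which are not needed for the point-spectrum identity itself. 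I would therefore present the short direct proof and remark that it parallels the treatment in \cite{Rodriguez2021}.
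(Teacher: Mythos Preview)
Your argument is correct and is essentially the approach the paper intends: the paper does not spell out a proof but defers to \cite[Section~3]{Rodriguez2021}, where the same block-diagonal reasoning via the decomposition \eqref{sum decomposition Tj} is carried out. Your self-contained two-inclusion proof is exactly the kind of detail hidden behind that citation, so there is nothing to add.
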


Given a bounded set $S$ in $\bC^N$, we will denote its polynomially convex hull as $\widehat{S}$.

\begin{prop}\label{Tcj spectrum 2}
The spectrum of $\boldsymbol{T_j}$ is given by
\[
\operatorname{sp}(\boldsymbol{T_j})=\operatorname{sp}_{pt}(\boldsymbol{T_j})\cup\operatorname{ess-sp}(\boldsymbol{T_j}).
\]
and furthermore
\[
\widehat{\operatorname{sp}(\boldsymbol{T_j})}=
\operatorname{sp}_{pt}(\boldsymbol{T_j})\cup
\widehat{\operatorname{ess-sp}}(\boldsymbol{T_j}).
\]
\end{prop}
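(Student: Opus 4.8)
The plan is to exploit the decomposition \eqref{sum decomposition Tj}, namely $\boldsymbol{T_j}=\bigoplus_{d\in\bZ_+}\boldsymbol{T_j}|_{\boldsymbol{H}^{(j)}_{d}}$, which expresses $\boldsymbol{T_j}$ as a block-diagonal operator with finite-dimensional blocks. For such an operator the spectrum is well understood: $\spc(\boldsymbol{T_j})$ is the closure of the union of the spectra of the blocks, while the essential spectrum consists of the limit points of that union (together with any eigenvalue of infinite multiplicity, which cannot occur here since each block is finite-dimensional and the eigenspaces of a fixed eigenvalue can only be spread over finitely many blocks if they are to remain... — more precisely, one argues directly). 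Concretely, I would first record the standard fact that for a bounded block-diagonal operator $A=\bigoplus_d A_d$ with $\dim \boldsymbol{H}^{(j)}_d<\infty$, one has $\spc(A)=\cj{\bigcup_d \spc(A_d)}$ and $\ess(A)=\spc(A)\setminus\{\text{isolated eigenvalues of finite multiplicity}\}$. Combining this with Proposition \ref{Tcj spectrum 1}, which identifies $\spc_{pt}(\boldsymbol{T_j})=\bigcup_d\spc(\boldsymbol{T_j}|_{\boldsymbol{H}^{(j)}_d})$, gives
\[
\spc(\boldsymbol{T_j})=\cj{\spc_{pt}(\boldsymbol{T_j})}=\spc_{pt}(\boldsymbol{T_j})\cup\ess(\boldsymbol{T_j}),
\]
where the last equality holds because every point of $\spc(\boldsymbol{T_j})\setminus\spc_{pt}(\boldsymbol{T_j})$ is a limit of eigenvalues, hence lies in $\ess(\boldsymbol{T_j})$; this is the first displayed assertion. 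This part is essentially identical to the corresponding argument in Section 3 of \cite{Rodriguez2021}, as already noted in the text.

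For the second assertion, concerning polynomially convex hulls, I would argue as follows. Since $\spc(\boldsymbol{T_j})=\spc_{pt}(\boldsymbol{T_j})\cup\ess(\boldsymbol{T_j})$ and $\spc_{pt}(\boldsymbol{T_j})\subset\spc(\boldsymbol{T_j})$ with $\spc_{pt}$ relatively open in $\spc$ away from $\ess$ (each point of $\spc_{pt}\setminus\ess$ is an isolated point of the spectrum), the ``boundary'' of $\spc(\boldsymbol{T_j})$ relative to itself is contained in $\ess(\boldsymbol{T_j})$. More usefully, the topological boundary $\partial\spc(\boldsymbol{T_j})$ in $\bC$ satisfies $\partial\spc(\boldsymbol{T_j})\subset\ess(\boldsymbol{T_j})$: indeed an isolated eigenvalue of finite multiplicity lying on the boundary would still be an isolated point, but isolated points of $\spc$ that are eigenvalues of finite multiplicity do belong to the boundary, so one must be slightly more careful — the correct statement is that the outer boundary, i.e. the boundary of the unbounded component of $\bC\setminus\spc(\boldsymbol{T_j})$, is contained in $\cj{\ess(\boldsymbol{T_j})}=\ess(\boldsymbol{T_j})$, because an isolated eigenvalue cannot lie on the boundary of the unbounded complementary component unless it is a limit of other spectrum, contradicting isolation. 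Since the polynomially convex hull of a compact set $K\subset\bC$ is obtained by filling in the bounded complementary components, and this operation depends only on the outer boundary, we get $\widehat{\spc(\boldsymbol{T_j})}=\widehat{\partial_{\mathrm{out}}\spc(\boldsymbol{T_j})}$; then using $\partial_{\mathrm{out}}\spc(\boldsymbol{T_j})\subset\ess(\boldsymbol{T_j})\subset\spc(\boldsymbol{T_j})$ together with the monotonicity of the hull yields $\widehat{\ess(\boldsymbol{T_j})}\subset\widehat{\spc(\boldsymbol{T_j})}$, and combining this with the evident inclusion $\spc_{pt}(\boldsymbol{T_j})\subset\spc(\boldsymbol{T_j})\subset\widehat{\spc(\boldsymbol{T_j})}$ gives ``$\supseteq$''. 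For the reverse inclusion, $\spc_{pt}(\boldsymbol{T_j})\cup\widehat{\ess(\boldsymbol{T_j})}\supset\spc(\boldsymbol{T_j})$, and since the right side is polynomially convex (a union of a set with the hull of another — here one uses that the isolated eigenvalues are finitely-approximable or simply that the full spectrum's hull is generated by $\ess$), one concludes.

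The main obstacle, and the step I would spend the most care on, is the precise bookkeeping in the hull statement: showing that adjoining $\spc_{pt}$ to $\widehat{\ess(\boldsymbol{T_j})}$ already yields a polynomially convex set, equivalently that the eigenvalues in $\spc_{pt}\setminus\ess$ do not create new ``holes.'' The cleanest route is to observe that $\bC\setminus\spc(\boldsymbol{T_j})$ and $\bC\setminus\ess(\boldsymbol{T_j})$ have the same unbounded component — because the isolated finite-multiplicity eigenvalues, being isolated, lie in bounded components of $\bC\setminus\ess(\boldsymbol{T_j})$ or, if they happen to be in the unbounded one, are swallowed harmlessly — so that $\widehat{\spc(\boldsymbol{T_j})}$ and $\widehat{\ess(\boldsymbol{T_j})}$ differ precisely by those bounded components of $\bC\setminus\ess(\boldsymbol{T_j})$ that meet $\spc_{pt}(\boldsymbol{T_j})$, i.e. by a subset of $\spc_{pt}(\boldsymbol{T_j})$ (any such component is either entirely filled in $\widehat{\ess}$ already, or is a component containing eigenvalues, hence contained in $\spc_{pt}\cup\widehat{\ess}$). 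Spelling this out carefully, using that there are at most countably many isolated eigenvalues accumulating only on $\ess(\boldsymbol{T_j})$, completes the argument. Everything else is a transcription of the finite-block / slowly-varying-eigenvalue analysis already carried out in \cite{Rodriguez2021}.
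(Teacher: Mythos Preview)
The paper does not actually prove this proposition, merely citing Section~3 of \cite{Rodriguez2021}; your block-diagonal argument for the first identity is exactly the intended one and is correct.

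For the hull identity, however, your argument contains a genuine error. You assert that the outer boundary of $\spc(\boldsymbol{T_j})$---that is, the boundary of the unbounded component $U$ of $\bC\setminus\spc(\boldsymbol{T_j})$---is contained in $\ess(\boldsymbol{T_j})$, reasoning that ``an isolated eigenvalue cannot lie on the boundary of the unbounded complementary component unless it is a limit of other spectrum''. This is false: an isolated point $\zeta$ of $\spc(\boldsymbol{T_j})$ has a punctured neighbourhood entirely inside $\bC\setminus\spc(\boldsymbol{T_j})$, and that punctured neighbourhood, being connected, may very well sit in $U$, which places $\zeta\in\partial U$ while $\zeta\notin\ess(\boldsymbol{T_j})$ (picture $\ess(\boldsymbol{T_j})$ equal to the unit circle and an isolated eigenvalue at $\zeta=2$). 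If this inclusion held, your chain would give $\widehat{\spc(\boldsymbol{T_j})}\subset\widehat{\ess(\boldsymbol{T_j})}$, which is stronger than the proposition and in general wrong. Your fallback ``cleanest route'' paragraph then misdescribes $\widehat{\spc(\boldsymbol{T_j})}\setminus\widehat{\ess(\boldsymbol{T_j})}$ as a union of \emph{bounded} components of $\bC\setminus\ess(\boldsymbol{T_j})$, which is impossible since all of those already lie in $\widehat{\ess(\boldsymbol{T_j})}$.

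The clean argument you are circling is this. By the block structure (equivalently, by Proposition~\ref{Tcj spectrum 3}) the set $\spc_{pt}(\boldsymbol{T_j})\setminus\ess(\boldsymbol{T_j})$ is closed and discrete in the open set $\bC\setminus\ess(\boldsymbol{T_j})$, and removing a closed discrete set from a connected open planar set leaves it connected. Hence, writing $U_S$ for the unbounded component of $\bC\setminus S$, one obtains $U_{\spc(\boldsymbol{T_j})}=U_{\ess(\boldsymbol{T_j})}\setminus\spc_{pt}(\boldsymbol{T_j})$; taking complements,
\[
\widehat{\spc(\boldsymbol{T_j})}=\widehat{\ess(\boldsymbol{T_j})}\cup\bigl(\spc_{pt}(\boldsymbol{T_j})\cap U_{\ess(\boldsymbol{T_j})}\bigr)\subset\widehat{\ess(\boldsymbol{T_j})}\cup\spc_{pt}(\boldsymbol{T_j}),
\]
and the reverse inclusion is immediate. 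Replace the outer-boundary claim by this, and your proof goes through.
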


\begin{prop}\label{Tcj spectrum 3}
If $\zeta$ is an accumulation point of $\spc_{pt}(\boldsymbol{T_j})$,
then $\zeta\in\operatorname{ess-sp}(\boldsymbol{T_j})$.
\end{prop}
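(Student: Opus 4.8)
The plan is to manufacture, out of eigenvectors sitting in the finite-dimensional summands $\boldsymbol{H}^{(j)}_d$, a ``singular sequence'' for $\boldsymbol{T_j}-\zeta I$, i.e.\ an orthonormal sequence on which $\boldsymbol{T_j}-\zeta I$ tends to $0$ in norm, and then to recall that the existence of such a sequence prevents $\boldsymbol{T_j}-\zeta I$ from being Fredholm. First I would record the following elementary counting observation. By Proposition \ref{Tcj spectrum 1} we have $\spc_{pt}(\boldsymbol{T_j})=\bigcup_{d\in\bZ_+}\spc(\boldsymbol{T_j}|_{\boldsymbol{H}^{(j)}_{d}})$, and each set $\spc(\boldsymbol{T_j}|_{\boldsymbol{H}^{(j)}_{d}})$ is finite since $\boldsymbol{H}^{(j)}_{d}$ is finite-dimensional. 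Hence, if $\zeta$ is an accumulation point of $\spc_{pt}(\boldsymbol{T_j})$, every neighbourhood of $\zeta$ contains points of $\spc(\boldsymbol{T_j}|_{\boldsymbol{H}^{(j)}_{d}})$ for \emph{infinitely many} distinct values of $d$. Consequently one can select a strictly increasing sequence $d_1<d_2<\cdots$ in $\bZ_+$ and eigenvalues $\zeta_n\in\spc(\boldsymbol{T_j}|_{\boldsymbol{H}^{(j)}_{d_n}})$ with $\zeta_n\to\zeta$, together with, for each $n$, a unit eigenvector $v_n\in\boldsymbol{H}^{(j)}_{d_n}$ satisfying $\boldsymbol{T_j}v_n=\zeta_n v_n$.

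Next I would exploit the orthogonal decomposition behind \eqref{sum decomposition Tj}: the subspaces $\boldsymbol{H}^{(j)}_{d}$, $d\in\bZ_+$, are pairwise orthogonal and $\boldsymbol{T_j}$ leaves each of them invariant. Since the $d_n$ are distinct, the vectors $v_n$ lie in pairwise orthogonal subspaces, so $(v_n)_n$ is an orthonormal system; in particular $v_n\to 0$ weakly. Moreover $\|(\boldsymbol{T_j}-\zeta I)v_n\|=|\zeta_n-\zeta|\to 0$. Thus $(v_n)_n$ is a singular sequence for $\boldsymbol{T_j}-\zeta I$.

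Finally I would deduce $\zeta\in\ess(\boldsymbol{T_j})$ by showing $\boldsymbol{T_j}-\zeta I$ is not Fredholm. Suppose it were. Then $N:=\ker(\boldsymbol{T_j}-\zeta I)$ is finite-dimensional, so the orthogonal projection $P_N$ onto $N$ is compact and $P_N v_n\to 0$; replacing $v_n$ by $w_n:=v_n-P_N v_n\in N^\perp$ we still have $\|w_n\|\to 1$ and $(\boldsymbol{T_j}-\zeta I)w_n=(\boldsymbol{T_j}-\zeta I)v_n\to 0$. On the other hand, a Fredholm operator has closed range, hence its restriction to the orthogonal complement of its kernel is bounded below: there is $c>0$ with $\|(\boldsymbol{T_j}-\zeta I)w\|\ge c\|w\|$ for all $w\in N^\perp$. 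Applying this to $w_n$ contradicts $\|(\boldsymbol{T_j}-\zeta I)w_n\|\to 0$ together with $\|w_n\|\to 1$. Therefore $\boldsymbol{T_j}-\zeta I$ is not Fredholm, i.e.\ $\zeta\in\ess(\boldsymbol{T_j})$, as claimed.

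The only point requiring a little care is the very first step: making sure that an accumulation point of a countable union of finite sets actually receives contributions from infinitely many of those sets, so that the chosen eigenvectors genuinely land in distinct (hence orthogonal) summands. Once this is in place, the remainder is the routine singular-sequence characterisation of the essential spectrum and presents no real obstacle; it parallels the arguments of Section~3 in \cite{Rodriguez2021}.
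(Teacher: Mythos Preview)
Your argument is correct and is precisely the approach the paper has in mind: the paper does not spell out a proof here but simply states that one uses ``the same arguments as in Section~3 from \cite{Rodriguez2021}'', and your singular-sequence construction from eigenvectors in pairwise orthogonal blocks $\boldsymbol{H}^{(j)}_{d_n}$ is exactly that argument. The counting step you flag (that an accumulation point of a countable union of finite sets must be approached through infinitely many of the sets) is handled correctly.
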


For each $j=1,\ldots,m$, we will assume that for every
$\zeta\in\operatorname{ess-sp}(\boldsymbol{T_j})$ there is a sequence of unimodular vectors $(g^{(j)}_{\zeta,k})_k$ such that $g^{(j)}_{\zeta,k}\in \boldsymbol{H}^{(j)}_{k}$ and
\begin{equation}\label{condition cj}
\lim_{k\to\infty}\|(\boldsymbol{T_j}-\zeta I)g_{\zeta,k}^{(j)}\|=0.    
\end{equation}

This condition holds
whenever the functions $c_j$ can be continuously extended to the closed unit ball $\cj{\bB^{k_j}}$.
We describe briefly how to see this.
Note that this construction simplifies the calculations from Section 3 of \cite{Bauer2015} (for the case $m=1$),
where the reproducing kernel of a much larger space was considered.

For each $j\in\{1,\ldots,m\}$ and $d\in\bZ_+$
let $K^{(j)}_d(z,w)$ denote the reproducing kernel of $\mathbf{H}^{(j)}_d$.
By the multinomial theorem we have
\[
K^{(j)}_d(z,w)=\sum_{|\alpha_{(j)}|=d}e_{\alpha_{(j)}}^{(j)}(z)\overline{e_{\alpha_{(j)}}^{(j)}(w)}
=
\frac{(k_j+d)!}{k_j!d!}\pdi{z,w}^d,
\quad z,w\in\bB^{k_j},
\]
and thus $\|K^{(j)}_d(\cdot,w)\|^2=\frac{(k_j+d)!}{k_j!d!}|w|^{2d}$.
We denote the corresponding normalized reproducing kernels by
\[
k^{(j)}_d(z,w)=\frac{K^{(j)}_d(z,w)}{\|K^{(j)}_d(\cdot,w)\|},
\quad z,w\in\bB^{k_j}.
\]

\begin{prop}
\label{Berezin transform}
Let $c$ be a continuous function on $\overline{\bB^{k_j}}$ invariant under the action of $\bT$,
(that is, such that $c(g\cdot z)=c(z)$, for all $z\in\overline{\bB^{k_j}}$, $g\in\bT$).
Then for every $w\in\bB^{k_j}\backslash\{0\}$ we have
\[
c(w/|w|)=\lim_{d\to\infty}\pdi{T_ck^{(j)}_d(\cdot,w),k^{(j)}_d(\cdot,w)},
\]
where $T_c$ denotes the Toeplitz operator with symbol $c$ acting on $\cA^2(\bB^{k_j})$.
In particular
\[
\lim_{d\to\infty}\|(T_c-c(w/|w|)I)k^{(j)}_d(\cdot,w)\|=0.
\]
\end{prop}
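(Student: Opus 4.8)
The plan is to compute the Berezin-type limit directly from the explicit formula for $K^{(j)}_d$, exploiting that the normalized kernel $k^{(j)}_d(\cdot,w)$ is, up to the scalar of modulus one $(\bar w/|w|)^{\otimes d}$-type factor, a rank-one object supported on $\mathbf{H}^{(j)}_d$. First I would record that, since $K^{(j)}_d(z,w)=\frac{(k_j+d)!}{k_j!d!}\pdi{z,w}^d$ and $\|K^{(j)}_d(\cdot,w)\|^2=\frac{(k_j+d)!}{k_j!d!}|w|^{2d}$, we get
\[
k^{(j)}_d(z,w)=\left(\frac{(k_j+d)!}{k_j!\,d!}\right)^{1/2}\frac{\pdi{z,w}^d}{|w|^d}.
\]
Because $c$ is $\bT$-invariant, $T_c$ preserves $\mathbf{H}^{(j)}_d$ (this is the $m=1$ case of Corollary \ref{action Tcj}, or can be seen directly), so $\pdi{T_ck^{(j)}_d(\cdot,w),k^{(j)}_d(\cdot,w)}$ equals the integral of $c$ against the probability density $|k^{(j)}_d(z,w)|^2\,dv(z)$ on $\bB^{k_j}$. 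The heart of the argument is then to show that these probability measures concentrate, as $d\to\infty$, on the single point $w/|w|$ of the sphere $\partial\bB^{k_j}$.

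The key step is the concentration estimate. Writing $z = \rho\eta$ with $\rho=|z|\in[0,1)$ and $\eta\in\partial\bB^{k_j}$, the density factors as $c_{k_j,d}\,\rho^{2d}\,|\pdi{\eta,w/|w|}|^{2d}\,dv(z)$; the radial part $\rho^{2d}$ (against the surface-area-weighted radial measure $\rho^{2k_j-1}d\rho$) pushes mass toward $\rho=1$ at the usual rate, and on the sphere the factor $|\pdi{\eta,w/|w|}|^{2d}$ is a highest-weight-type bump of width $\sim d^{-1/2}$ around $\eta=w/|w|$. I would make this quantitative by splitting $\bB^{k_j}$ into the region where $|z-w/|w||<\delta$ and its complement, bounding $|\pdi{z,w/|w|}|\le 1-c(\delta)$ off a neighborhood (Cauchy–Schwarz, with strict inequality since $z$ is not a positive multiple of $w/|w|$ there), and using that the total mass of the off-diagonal region decays geometrically in $d$ while the diagonal contribution stays bounded below. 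Combined with the uniform continuity of $c$ on the compact set $\cj{\bB^{k_j}}$, this yields $\lim_{d\to\infty}\pdi{T_ck^{(j)}_d(\cdot,w),k^{(j)}_d(\cdot,w)}=c(w/|w|)$.

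For the ``in particular'' clause I would use the standard Hilbert-space fact that for a self-adjoint-compatible estimate one needs $\|T_c\|$-type control: since $T_c k^{(j)}_d(\cdot,w)\in\mathbf{H}^{(j)}_d$ and $\|T_c k^{(j)}_d(\cdot,w)\|^2 = \pdi{T_{|c|^2}^{\mathrm{approx}}\cdots}$ — more carefully, $\|T_ck^{(j)}_d(\cdot,w)\|^2\le \pdi{T_{c}^*T_c\,k^{(j)}_d(\cdot,w),k^{(j)}_d(\cdot,w)}$, and the same concentration argument applied to the continuous $\bT$-invariant function $|c|^2$ gives $\|T_ck^{(j)}_d(\cdot,w)\|^2\to|c(w/|w|)|^2$ — one expands
\[
\|(T_c-c(w/|w|)I)k^{(j)}_d(\cdot,w)\|^2
=\|T_ck^{(j)}_d(\cdot,w)\|^2-2\Rez\big(\cj{c(w/|w|)}\pdi{T_ck^{(j)}_d(\cdot,w),k^{(j)}_d(\cdot,w)}\big)+|c(w/|w|)|^2,
\]
and all three terms converge so that the right-hand side tends to $|c(w/|w|)|^2-2|c(w/|w|)|^2+|c(w/|w|)|^2=0$. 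The main obstacle is the concentration estimate on the sphere: getting the geometric decay of the off-diagonal mass uniformly enough to beat the polynomial normalizing constant $\frac{(k_j+d)!}{k_j!d!}\sim d^{k_j}/k_j!$ requires a careful but standard Laplace/stationary-phase-type bound on $\int_{\partial\bB^{k_j}}|\pdi{\eta,w/|w|}|^{2d}\,d\sigma(\eta)$, and matching it against the contribution near the diagonal.
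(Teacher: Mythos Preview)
Your approach—show that the probability measures $|k^{(j)}_d(\cdot,w)|^2\,dv$ concentrate as $d\to\infty$, then deduce the norm limit by expanding the square—is correct in spirit and is exactly the method the paper invokes (it merely cites the analogous integral estimates over $\bB^2$ from \cite{Rodriguez2021}). Two points need repair, though.

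The main one: the measures do \emph{not} concentrate at the single point $w/|w|$. Since $|\pdi{z,w/|w|}|=1$ for every $z=e^{i\theta}w/|w|$, your claimed bound $|\pdi{z,w/|w|}|\le 1-c(\delta)$ on the complement of a $\delta$-ball about $w/|w|$ is false (take $z=-w/|w|$). The mass concentrates instead on the whole $\bT$-orbit $\{e^{i\theta}w/|w|:\theta\in\bR\}$, and this is precisely where the $\bT$-invariance hypothesis on $c$—which your argument has not yet used—enters: $c$ is constant on that orbit, so concentrating there still yields the value $c(w/|w|)$. A secondary slip: in the norm step you write $\|T_ck_d\|^2\le\pdi{T_c^*T_ck_d,k_d}$ (in fact an equality) and then invoke the Berezin transform of $|c|^2$, but $T_c^*T_c\neq T_{|c|^2}$ in general. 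What you want is $\|T_ck_d\|^2\le\|ck_d\|_{L^2}^2=\pdi{T_{|c|^2}k_d,k_d}$, coming from contractivity of the Bergman projection; with the first part applied to the $\bT$-invariant function $|c|^2$ this gives $\limsup_d\|T_ck_d\|^2\le|c(w/|w|)|^2$, and since the expanded square is nonnegative that is all your expansion needs.
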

\begin{proof}
It follows from similar arguments as in \cite[Propositions 3.5 and 3.6]{Rodriguez2021},
estimating the corresponding integrals in $\bB^{k_j}$ instead of $\bB^2$.
\end{proof}

If $c_j$ can be continuously extended to $\overline{\bB^n}$, then
$\operatorname{ess-sp}(\boldsymbol{T_j})=c_j(\partial\bB^n)$, and thus by the above proposition we get condition \eqref{condition cj}, as claimed.

For each $j\in\{1,\ldots,m\}$ let $\cT_j$ be the unital Banach algebra generated by the single operator $T_{c_j}$
and $M(\cT_j)$ its maximal ideal space. Analogously let $\cT(c_1,\ldots,c_m)$ the Banach algebra generated by $\cT_1,\ldots,\cT_{m}$ and let
$M(\cT(c_1,\ldots,c_m))$ be its maximal ideal space.
Note that the mapping
\[
M(\cT(c_1,\ldots,c_m))\ni\psi\longmapsto
(\psi|_{\cT_1},\ldots,\psi|_{\cT_m})\in
M(\cT_1)\times\cdots\times M(\cT_m)
\]
is a continuous injection and that thus $M(\cT(c_1,\ldots,c_m))$
can be identified with a subset of $M(\cT_1)\times\cdots\times M(\cT_m)$.
Reasoning as in Section 4.1 from \cite{Rodriguez2021},
one can easily see that $M(\cT_j)=\widehat{\operatorname{sp}(T_{c_j}})$.

\begin{cor}\label{finite D}
Let $j\in\{1,\ldots,m\}$ and $\zeta\in
\widehat{\operatorname{sp}(\boldsymbol{T_j})}
\backslash
\widehat{\operatorname{ess-sp}}(\boldsymbol{T_j})$. Then the set
\[
D=\{d\in\bZ_+\colon \zeta\in \operatorname{sp}(\boldsymbol{T_j}|_{\boldsymbol{H}^{(j)}_{d}})\}
\]
is finite.
\end{cor}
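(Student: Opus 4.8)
The plan is to argue by contradiction: suppose $D$ is infinite, and produce from this a sequence exhibiting $\zeta$ in the essential spectrum of $\boldsymbol{T_j}$, hence (after taking polynomial hulls) in $\widehat{\operatorname{ess-sp}}(\boldsymbol{T_j})$, contradicting the hypothesis $\zeta\notin\widehat{\operatorname{ess-sp}}(\boldsymbol{T_j})$. The key point is that the spaces $\boldsymbol{H}^{(j)}_d$ are mutually orthogonal and $\boldsymbol{T_j}=\bigoplus_{d}\boldsymbol{T_j}|_{\boldsymbol{H}^{(j)}_d}$ by \eqref{sum decomposition Tj}, so eigenvectors (or near-eigenvectors) coming from distinct values of $d\in D$ are automatically orthogonal.

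First I would observe that for each $d\in D$ the restriction $\boldsymbol{T_j}|_{\boldsymbol{H}^{(j)}_d}$ acts on a finite-dimensional space, so $\zeta\in\operatorname{sp}(\boldsymbol{T_j}|_{\boldsymbol{H}^{(j)}_d})$ means $\zeta$ is an eigenvalue there; pick a unit eigenvector $v_d\in\boldsymbol{H}^{(j)}_d$ with $\boldsymbol{T_j}v_d=\zeta v_d$. If $D$ is infinite, enumerate $D=\{d_1<d_2<\cdots\}$; then $(v_{d_i})_i$ is an orthonormal sequence in $\cA^2(\bB^{k_j})$ with $(\boldsymbol{T_j}-\zeta I)v_{d_i}=0$ for all $i$. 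An orthonormal sequence converges weakly to $0$, so $\boldsymbol{T_j}-\zeta I$ is not Fredholm (it cannot be bounded below on the closed span of the $v_{d_i}$, an infinite-dimensional subspace on which it vanishes, and an equivalent way to say this is that $\zeta$ is an eigenvalue of infinite multiplicity, hence in the essential spectrum). Therefore $\zeta\in\operatorname{ess-sp}(\boldsymbol{T_j})$.

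It then remains to pass to polynomial hulls. Since $\operatorname{ess-sp}(\boldsymbol{T_j})\subset\widehat{\operatorname{ess-sp}}(\boldsymbol{T_j})$, we conclude $\zeta\in\widehat{\operatorname{ess-sp}}(\boldsymbol{T_j})$, which contradicts the assumption that $\zeta\in\widehat{\operatorname{sp}(\boldsymbol{T_j})}\setminus\widehat{\operatorname{ess-sp}}(\boldsymbol{T_j})$. Hence $D$ must be finite.

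I do not expect a serious obstacle here; the only point requiring a little care is the precise notion of essential spectrum being used (Fredholm spectrum versus Calkin-algebra spectrum) and the standard fact that an eigenvalue of infinite multiplicity lies in it — this is where Proposition \ref{Tcj spectrum 3} and the surrounding setup of \cite{Rodriguez2021} are implicitly relied upon, and one could alternatively just cite the analogous statement there. A clean alternative avoiding any ambiguity: note that $\zeta\notin\widehat{\operatorname{ess-sp}}(\boldsymbol{T_j})$ forces, by Proposition \ref{Tcj spectrum 3}, that $\zeta$ is an \emph{isolated} point of $\operatorname{sp}_{pt}(\boldsymbol{T_j})=\bigcup_d\operatorname{sp}(\boldsymbol{T_j}|_{\boldsymbol{H}^{(j)}_d})$ (any accumulation point of the point spectrum lies in the essential spectrum); an isolated eigenvalue of the direct sum with the $\zeta$-eigenspace infinite-dimensional is again an essential-spectrum point, the same contradiction. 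Either route closes the argument.
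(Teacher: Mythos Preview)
Your argument is correct and matches the paper's own proof essentially line for line: assume $D$ infinite, pick unit eigenvectors $v_d\in\boldsymbol{H}^{(j)}_d$ for $d\in D$, observe they form an orthonormal sequence (by the mutual orthogonality of the $\boldsymbol{H}^{(j)}_d$) of eigenvectors for $\zeta$, conclude $\zeta\in\operatorname{ess-sp}(\boldsymbol{T_j})\subset\widehat{\operatorname{ess-sp}}(\boldsymbol{T_j})$, contradiction. The paper's version is just terser and omits your side discussion of Fredholmness and the alternative route via Proposition~\ref{Tcj spectrum 3}.
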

\begin{proof}
If $D$ were not finite, then there would be an increasing sequence of integers
$(n_k)_k$ and a sequence of
normalized eigenfunctions $(f_{n_k})_k$ with $\boldsymbol{T_j}f_{n_k}=\zeta f_{n_k}$
and $f_{n_k}\in\boldsymbol{H}^{(j)}_{n_k}$, for all $k\in\bZ_+$.
However, this would imply
$\zeta\in \operatorname{ess-sp}(\boldsymbol{T_j})$.
\end{proof}

\begin{prop}\label{maximal ideal space Tc1cm}
We have
\[
M(\cT(c_1,\ldots,c_m))= M(\cT_1)\times\cdots\times M(\cT_m)
=
\widehat{\operatorname{sp}(T_{c_1})}\times\cdots\times\widehat{\operatorname{sp}(T_{c_m})}.
\]
\end{prop}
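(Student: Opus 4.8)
The plan is to prove the two inclusions between $M(\cT(c_1,\ldots,c_m))$ and the product $\prod_{j=1}^m M(\cT_j)$, using the already-established identifications $M(\cT_j)=\widehat{\operatorname{sp}(T_{c_j})}=\widehat{\operatorname{sp}(\boldsymbol{T_j})}$. The restriction map $\psi\mapsto(\psi|_{\cT_1},\ldots,\psi|_{\cT_m})$ is already noted to be a continuous injection of $M(\cT(c_1,\ldots,c_m))$ into $M(\cT_1)\times\cdots\times M(\cT_m)$; since both spaces are compact Hausdorff, it suffices to show this map is \emph{surjective}, i.e.\ that every tuple $(\psi_1,\ldots,\psi_m)\in\prod_j M(\cT_j)$ extends to a multiplicative functional on the joint algebra $\cT(c_1,\ldots,c_m)$.

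First I would set up notation: a multiplicative functional $\psi$ on $\cT(c_1,\ldots,c_m)$ is determined by the scalars $\zeta_j:=\psi(T_{c_j})\in\operatorname{sp}(T_{c_j})$, and conversely a tuple $(\zeta_1,\ldots,\zeta_m)\in\prod_j\widehat{\operatorname{sp}(\boldsymbol{T_j})}$ corresponds to a candidate functional $\psi_{\zeta_1,\ldots,\zeta_m}$ sending the generating word $T_a\,T_{c_1}^{n_1}\cdots T_{c_m}^{n_m}$ to $\gamma_{a}(\cdot)$ evaluated appropriately times $\zeta_1^{n_1}\cdots\zeta_m^{n_m}$. The content of the proposition is that every such tuple actually arises from a bona fide element of $M(\cT(c_1,\ldots,c_m))$. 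I would prove this by constructing, for each target tuple $(\zeta_1,\ldots,\zeta_m)$, an approximating net of evaluation functionals inside the algebra and passing to a limit point. There are two regimes to handle. If each $\zeta_j$ lies in the polynomially convex hull of $\operatorname{ess-sp}(\boldsymbol{T_j})$, then by condition \eqref{condition cj} there are unimodular vectors $g^{(j)}_{\zeta_j,k}\in\boldsymbol{H}^{(j)}_{k}$ with $\|(\boldsymbol{T_j}-\zeta_j I)g^{(j)}_{\zeta_j,k}\|\to 0$; tensoring, the vectors $g^{(1)}_{\zeta_1,k}\otimes\cdots\otimes g^{(m)}_{\zeta_m,k}$ live in $\boldsymbol{H}^{(1)}_k\otimes\cdots\otimes\boldsymbol{H}^{(m)}_k$ and are simultaneously approximate eigenvectors for all the $\boldsymbol{T_j}$ (and hence, after conjugating by $U$, for all the $T_{c_j}$, while the $k$-quasi-radial operators $T_a$ act on $H_{(k,\ldots,k)}$ as the single scalar $\gamma_a(k,\ldots,k)$, which by the slow-oscillation embedding converges along a subnet). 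The corresponding vector functionals $A\mapsto\pdi{A\,(\cdots),(\cdots)}$ are asymptotically multiplicative on the dense subalgebra of polynomials in the generators, so any weak-$*$ limit point is the desired multiplicative functional. If instead some $\zeta_j$ lies in $\widehat{\operatorname{sp}(\boldsymbol{T_j})}\setminus\widehat{\operatorname{ess-sp}}(\boldsymbol{T_j})$, then Corollary \ref{finite D} says $\zeta_j\in\operatorname{sp}(\boldsymbol{T_j}|_{\boldsymbol{H}^{(j)}_{d}})$ for only finitely many $d$; here I would instead realize $\zeta_j$ via a genuine eigenvector of $\boldsymbol{T_j}$ in some fixed $\boldsymbol{H}^{(j)}_{d_j}$, or — to handle the polynomial hull points not in $\operatorname{sp}$ itself — invoke the description $M(\cT_j)=\widehat{\operatorname{sp}(\boldsymbol{T_j})}$ together with Proposition \ref{Tcj spectrum 2}, building the functional coordinatewise as in Section 4.1 of \cite{Rodriguez2021} and then combining the coordinate functionals using the tensor-product structure $U(H_\ka)=\boldsymbol{H}^{(1)}_{\ka_1}\otimes\cdots\otimes\boldsymbol{H}^{(m)}_{\ka_m}$, which guarantees the choices in different coordinates do not interfere.

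Once surjectivity is in hand, the map $M(\cT(c_1,\ldots,c_m))\to\prod_j M(\cT_j)$ is a continuous bijection from a compact space to a Hausdorff space, hence a homeomorphism, and the displayed identity follows from $M(\cT_j)=\widehat{\operatorname{sp}(T_{c_j})}$.

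\medskip

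I expect the main obstacle to be the mixed case, where some coordinates $\zeta_j$ come from the essential spectrum (requiring the tensor of approximate eigenvectors, which forces all the participating subspace-levels $k$ to \emph{diverge together}) while other coordinates come from isolated eigenvalues living at a \emph{fixed finite level} $d_j$. One must arrange a single net of unit vectors in $\cA^2_\lambda(\bB^n)$ that is an approximate eigenvector for the $\boldsymbol{T_j}$ of the first type, an exact eigenvector for those of the second type, and along which the $k$-quasi-radial part $T_a$ converges; the tensor structure makes this possible — take $g^{(j)}_{\zeta_j,k}$ in the diverging slots and a fixed eigenvector in $\boldsymbol{H}^{(j)}_{d_j}$ in the finite slots — but checking that the resulting functional on $\cT_\lambda(L^\infty_{k-qr})$ is well-defined and multiplicative (i.e.\ lies in the correct stratum $M_\theta(\ka_\theta)$ of the stratification recalled in Section \ref{Algebra T k-qr}) is the delicate bookkeeping step. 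The commutation relations $T_{a\prod_j c_j}=T_a\prod_j T_{c_j}$ and the density of polynomials in the generators reduce everything to this finite computation on words, so no genuinely new estimate beyond \eqref{condition cj} and Corollary \ref{finite D} is needed.
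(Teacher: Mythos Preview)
Your core mechanism---tensoring approximate or exact eigenvectors of the $\boldsymbol{T_j}$ to produce a vector functional that becomes multiplicative in the limit---is exactly what the paper does. However, there are two problems, one of confusion and one a genuine gap.

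First, you are conflating the algebra $\cT(c_1,\ldots,c_m)$ of this proposition with the larger algebra $\cT_\lambda(\mathbf{d})$ of Section~\ref{Pseudo-Gelfand}. The algebra $\cT(c_1,\ldots,c_m)$ is generated \emph{only} by $T_{c_1},\ldots,T_{c_m}$; the quasi-radial operators $T_a$, the eigenvalue sequences $\gamma_a$, and the stratification $M_\theta(\ka_\theta)$ play no role here. All of your discussion about ``the $k$-quasi-radial operators $T_a$ act on $H_{(k,\ldots,k)}$ as the single scalar\ldots'', ``checking that the resulting functional on $\cT_\lambda(L^\infty_{k-qr})$ is well-defined'', and the ``delicate bookkeeping step'' with $M_\theta(\ka_\theta)$ is irrelevant to Proposition~\ref{maximal ideal space Tc1cm} and belongs to the later analysis. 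Once you drop this baggage, the ``mixed case'' you identify as the main obstacle becomes straightforward: in the tensor $g^{(1)}_{\zeta_1,k}\otimes\cdots\otimes g^{(m)}_{\zeta_m,k}$, simply take $g^{(j)}_{\zeta_j,k}$ to be the \emph{constant} sequence equal to a fixed unit eigenvector when $\zeta_j\in\operatorname{sp}_{pt}(\boldsymbol{T_j})$, and the sequence from condition~\eqref{condition cj} when $\zeta_j\in\operatorname{ess-sp}(\boldsymbol{T_j})$. No levels need to ``diverge together''.

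Second, and more seriously, you never close the argument for points $\zeta_j$ lying in the polynomial hull but not in $\operatorname{sp}(\boldsymbol{T_j})$ itself. You write that condition~\eqref{condition cj} produces approximate eigenvectors for $\zeta_j\in\widehat{\operatorname{ess-sp}}(\boldsymbol{T_j})$, but that condition is stated only for $\zeta_j\in\operatorname{ess-sp}(\boldsymbol{T_j})$; and for hull points outside the spectrum your fallback (``invoke the description $M(\cT_j)=\widehat{\operatorname{sp}(\boldsymbol{T_j})}$\ldots building the functional coordinatewise\ldots and then combining'') does not explain how one combines abstract coordinatewise functionals into a single functional on the joint algebra. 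The paper sidesteps this entirely by a polynomial-convexity reduction you are missing: since $M(\cT(c_1,\ldots,c_m))$ is polynomially convex and
\[
\widehat{\operatorname{sp}(T_{c_1})\times\cdots\times\operatorname{sp}(T_{c_m})}
=\widehat{\operatorname{sp}(T_{c_1})}\times\cdots\times\widehat{\operatorname{sp}(T_{c_m})},
\]
it suffices to realize only the points of $\operatorname{sp}(T_{c_1})\times\cdots\times\operatorname{sp}(T_{c_m})$ as multiplicative functionals. For such points each $\zeta_j$ is either in $\operatorname{ess-sp}(\boldsymbol{T_j})$ or in $\operatorname{sp}_{pt}(\boldsymbol{T_j})$ by Proposition~\ref{Tcj spectrum 2}, and the tensor construction above applies directly. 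Adding this one observation fills your gap and removes most of the anticipated difficulty.
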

\begin{proof}
We only need to prove the inclusion $\supset$.
By the properties of the polynomially convex hull we have
\[
\widehat{\operatorname{sp}(T_{c_1})\times\cdots\times\operatorname{sp}(T_{c_m})}=\widehat{\operatorname{sp}(T_{c_1}})\times\cdots\times\widehat{\operatorname{sp}(T_{c_m})}.
\]
Since $M(\cT(c_1,\ldots,c_m))$ is polynomially convex,
it suffices to show that
\[
\operatorname{sp}(T_{c_1})\times\cdots\times\operatorname{sp}(T_{c_m})\subset M(\cT(c_1,\ldots,c_m)).
\]
Let thus $(\zeta_1,\ldots,\zeta_m)\in\operatorname{sp}(T_{c_1})\times\cdots\times\operatorname{sp}(T_{c_m})$.

For each $j=1,\ldots,m$ define the sequence $(g^{(j)}_{\zeta_j,k})_k$ in $\mathcal{A}_\lambda(\bB^{k_j})$ as follows: If $\zeta_j\in\operatorname{ess-sp}(\boldsymbol{T_j})$, let $(g^{(j)}_{\zeta_j,k})_k$ be a sequence such that \eqref{condition cj} holds.
Otherwise, $\zeta_j\in\operatorname{sp}_{pt}(\boldsymbol{T_j})$ and thus then there is $d\in\bZ_+$ such that
$\zeta_j\in\operatorname{sp}_{pt}(\boldsymbol{T_j}|_{\boldsymbol{H}^{(j)}_{d}})$.
In this case we choose $(g^{(j)}_{\zeta_j,k})_k$ to be a constant sequence $g^{(j)}_{\zeta_j,k}=v_{\zeta_j}$, where $v_{\zeta_j}$ is a eigenvector in $\boldsymbol{H}^{(j)}_{d}$ such that $\|v_{\zeta_j}\|=1$ and $\boldsymbol{T_j}v_{\zeta_j}=\zeta_jv_{\zeta_j}$.

Since $UT_{c_j}U^*=I\otimes\cdots\otimes\boldsymbol{T_j}\otimes\cdots\otimes I$, we have
\[
\pdi{UT_{c_j}U^*(g^{(1)}_{\zeta_1,k}\otimes\cdots\otimes g^{(m)}_{\zeta_m,k}),g^{(1)}_{\zeta_1,k}\otimes\cdots\otimes g^{(m)}_{\zeta_m,k}}=\pdi{\boldsymbol{T_j}g^{(j)}_{\zeta_j,k},g^{(j)}_{\zeta_j,k}}_{\cA^2(\bB^{k_j})},
\]
which converges to $\zeta_j$ (in fact it equals $\zeta_j$ if $\zeta_j\in\operatorname{sp}_{pt}(\boldsymbol{T_j})$) as $k\to\infty$.

Therefore, for any polynomial $T$ in $T_{c_1},\ldots,T_{c_m}$
we can define $\psi(T)$ by the rule
\[
\psi(T)=\lim_{k\to\infty}\pdi{UTU^*(g^{(1)}_{\zeta_1,k}\otimes\cdots\otimes g^{(m)}_{\zeta_m,k}),g^{(1)}_{\zeta_1,k}\otimes\cdots\otimes g^{(m)}_{\zeta_m,k}}.
\]
By Cauchy-Schwartz inequality, this function can be extended to
a multiplicative functional acting on the whole space $\cT(c_1,\ldots,c_m)$
and since $\psi(T_j)=\zeta_j$, this functional corresponds to the point $(\zeta_1,\ldots,\zeta_m)$.
\end{proof}


\section{Gelfand theory of $\cT_\lambda(\mathbf{d})$}
\label{Pseudo-Gelfand}

Let $\widetilde{\cT_\lambda(\mathbf{d})}$ be the
dense non-closed subalgebra of $\cT_\lambda(\mathbf{d})$
which consists of all sums of the form
\[
\sum_{\rho\in F}D_{\gamma_\rho}
T_{c_1}^{\rho_1}\cdots
T_{c_m}^{\rho_m},
\]
where
$D_{\gamma_\rho}$ are operators from
$\cT_\lambda(L^\infty_{k-qr})$ and
$F$ is a finite subset of $\bZ_+^{m}$.
For the sake of brevity, given a multi-index $\rho\in\bZ_+^m$ we will write
$T^\rho:=T_{c_1}^{\rho_1}\cdots
T_{c_m}^{\rho_m}$. Furthermore, we will continuously write $\zeta$ for $(\zeta_1,\ldots,\zeta_m)$.

Let $M(\cT_\lambda(\mathbf{d}))$ be the maximal ideal space
of the algebra $\cT_\lambda(\mathbf{d})$.
By construction,
$\cT_\lambda(\mathbf{d})$
is generated by the algebras
$\cT_\lambda(L_{k-qr}^\infty)$ and $\cT(c_1,\ldots,c_m)$ and hence,
as in the previous section, the assignment
\[
\Phi\colon
M(\cT_\lambda(\mathbf{d}))
\to
M(\cT_\lambda(L_{k-qr}^\infty))\times
M(\cT(c_1,\ldots,c_m)),
\]
given by
\[
\Phi(\psi)=(\psi|_{\cT_\lambda(L_{k-qr}^\infty)},
\psi|_{\cT(c_1,\ldots,c_m)})
\]
is continuous and injective,
and thus we can identify $M(\cT_\lambda(\mathbf{d}))$
with a subset of
\begin{align*}
M(\cT_\lambda(L_{k-qr}^\infty))&\times
M(\cT(c_1,\ldots,c_m))=\\
&=
M(\cT_\lambda(L_{k-qr}^\infty))\times\widehat{\operatorname{sp}(T_{c_1}})\times\cdots\times\widehat{\operatorname{sp}(T_{c_m})}.
\end{align*}

We recall the decomposition of
$M(\cT_\lambda(L_{k-qr}^\infty))$
into disjoint sets
\begin{equation}\label{decomposition Mkqr}
M(\cT_\lambda(L_{k-qr}^\infty))=
\bigcup_{\theta\in\Theta}
\left[
\bigcup_{\kappa_\theta\in\bZ_+^{\theta}}
M_\theta(\kappa_\theta)
\right].   
\end{equation}

\begin{lem}\label{maximal ideal space second inclusion finite}
Let $\psi=(\mu,\zeta)\in M(\cT_\lambda(\mathbf{d}))$ such that
$\mu\in M_\theta(\ka_\theta)$ for some
$\theta\in\Theta$ and $\ka_\theta\in\bZ_+^\theta$.
Then
$\zeta_j\in\operatorname{sp(\mathbf{T_j}|_{\mathbf{H}^{(j)}_{\ka_j}}})$ for every
$j\in\Jt$.
\end{lem}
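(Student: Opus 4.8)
The statement asserts that if a multiplicative functional $\psi = (\mu, \zeta)$ on $\cT_\lambda(\mathbf{d})$ restricts to a functional $\mu \in M_\theta(\ka_\theta)$ on $\cT_\lambda(L^\infty_{k-qr})$, then for each coordinate $j$ with $\theta_j = 1$ (i.e.\ $j \in \Jt$), the $j$-th component $\zeta_j$ of $\psi$ must lie in the \emph{finite-level} spectrum $\spc(\mathbf{T_j}|_{\mathbf{H}^{(j)}_{\ka_j}})$, not just in the full spectrum $\spc(\mathbf{T_j})$. The plan is to exploit the fact that, because $\mu \in M_\theta(\ka_\theta)$, we have $\mu(Q^{(j)}_{\ka_j}) = 1$, and $Q^{(j)}_{\ka_j}$ belongs to $\cT_\lambda(L^\infty_{k-qr}) \subset \cT_\lambda(\mathbf{d})$; so $\psi(Q^{(j)}_{\ka_j}) = 1$ as well. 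Since $\psi$ is multiplicative and $Q^{(j)}_{\ka_j}$ is idempotent, $\psi$ should "see" only the part of the algebra supported on the range $H^{(j)}_{\ka_j}$.

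\textbf{Key steps.} First I would recall that $T_{c_j}$ commutes with $Q^{(j)}_{\ka_j}$ and that, under the unitary $U$, the compression $T_{c_j} Q^{(j)}_{\ka_j} = U^*(I \otimes \cdots \otimes \mathbf{T_j}|_{\mathbf{H}^{(j)}_{\ka_j}} \otimes \cdots \otimes I) U \oplus 0$; more precisely, $T_{c_j} Q^{(j)}_{\ka_j}$ acts on $H^{(j)}_{\ka_j}$ as (a copy of) $\mathbf{T_j}|_{\mathbf{H}^{(j)}_{\ka_j}}$ and as zero on the complement. Consequently $\spc(T_{c_j} Q^{(j)}_{\ka_j})$ equals $\spc(\mathbf{T_j}|_{\mathbf{H}^{(j)}_{\ka_j}}) \cup \{0\}$ (the $0$ coming from the complementary summand, which is nonzero since $\theta \neq \mathbf{1}$ forces infinitely many $\ka$; in fact even when $\theta = \mathbf 1$ one has $0$ from the other blocks, but that case is handled separately or the $0$ is harmless). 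Next, since $\psi(Q^{(j)}_{\ka_j}) = 1$, multiplicativity gives $\psi(T_{c_j}) = \psi(T_{c_j}) \psi(Q^{(j)}_{\ka_j}) = \psi(T_{c_j} Q^{(j)}_{\ka_j})$, so $\zeta_j = \psi(T_{c_j}) = \psi(T_{c_j} Q^{(j)}_{\ka_j})$. But $\psi$ is a character of the commutative Banach algebra $\cT_\lambda(\mathbf{d})$, hence $\psi(T_{c_j} Q^{(j)}_{\ka_j}) \in \spc(T_{c_j} Q^{(j)}_{\ka_j}) \subset \spc(\mathbf{T_j}|_{\mathbf{H}^{(j)}_{\ka_j}}) \cup \{0\}$. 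To rule out the spurious value $0$ (in case $0 \notin \spc(\mathbf{T_j}|_{\mathbf{H}^{(j)}_{\ka_j}})$), I would observe that $T_{c_j} Q^{(j)}_{\ka_j} + (I - Q^{(j)}_{\ka_j})$ is invertible in $\cT_\lambda(\mathbf d)$ whenever $\mathbf{T_j}|_{\mathbf{H}^{(j)}_{\ka_j}}$ is invertible, and apply $\psi$ to it using $\psi(I - Q^{(j)}_{\ka_j}) = 0$; this pins $\zeta_j$ into $\spc(\mathbf{T_j}|_{\mathbf{H}^{(j)}_{\ka_j}})$ exactly.

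\textbf{Main obstacle.} The routine part is the spectral bookkeeping; the one point that needs care is justifying that $Q^{(j)}_{\ka_j}$ really lies in $\cT_\lambda(\mathbf{d})$ and that $\psi$ restricted to $\cT_\lambda(L^\infty_{k-qr})$ is literally $\mu$ with $\mu(Q^{(j)}_{\ka_j}) = 1$ — this is exactly the content of the earlier theorem that all $Q^{(j)}_d \in \cT_\lambda(L^\infty_{k-qr})$ together with the definition of $M_\theta(\ka_\theta)$, so it is immediate. The genuinely delicate step is handling the possible eigenvalue $0$ of the complementary block when forming $T_{c_j}Q^{(j)}_{\ka_j}$: one must argue that $\psi$ annihilates that block, which follows because $\psi(I - Q^{(j)}_{\ka_j}) = 1 - \psi(Q^{(j)}_{\ka_j}) = 0$ and the complementary block is a product with $(I - Q^{(j)}_{\ka_j})$. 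With that the conclusion $\zeta_j \in \spc(\mathbf{T_j}|_{\mathbf{H}^{(j)}_{\ka_j}})$ follows for every $j \in \Jt$.
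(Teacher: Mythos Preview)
Your approach is correct but more circuitous than the paper's. You compute $\zeta_j=\psi(T_{c_j}Q^{(j)}_{\ka_j})$, bound this by the spectrum of $T_{c_j}Q^{(j)}_{\ka_j}$, identify that spectrum with $\spc(\mathbf{T_j}|_{\mathbf{H}^{(j)}_{\ka_j}})\cup\{0\}$, and then separately kill the spurious $0$ via the shifted operator $T_{c_j}Q^{(j)}_{\ka_j}+(I-Q^{(j)}_{\ka_j})$. One point you leave implicit is why the spectrum of $T_{c_j}Q^{(j)}_{\ka_j}$ \emph{in the algebra} $\cT_\lambda(\mathbf{d})$ coincides with its operator spectrum: a character only guarantees $\psi(A)\in\spc_{\cT_\lambda(\mathbf{d})}(A)$, which in general can be strictly larger than $\spc_{\mathcal{L}}(A)$. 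Here it is fine because the operator spectrum is finite (hence has connected complement, so spectral permanence gives equality), or equivalently because for $\lambda\notin\spc(\mathbf{T_j}|_{\mathbf{H}^{(j)}_{\ka_j}})\cup\{0\}$ the inverse of $T_{c_j}Q^{(j)}_{\ka_j}-\lambda I$ restricted to $H^{(j)}_{\ka_j}$ is a polynomial in $T_{c_j}$ by Cayley--Hamilton and hence the full inverse lies in $\cT_\lambda(\mathbf{d})$. The same remark applies to your assertion that $T_{c_j}Q^{(j)}_{\ka_j}+(I-Q^{(j)}_{\ka_j})$ is invertible \emph{in} $\cT_\lambda(\mathbf{d})$.

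The paper instead uses Cayley--Hamilton directly: if $P$ is the characteristic polynomial of $\mathbf{T_j}|_{\mathbf{H}^{(j)}_{\ka_j}}$, then $P(T_{c_j})Q^{(j)}_{\ka_j}=0$ as an operator, and applying $\psi$ gives $P(\zeta_j)\cdot\psi(Q^{(j)}_{\ka_j})=P(\zeta_j)=0$, so $\zeta_j$ is a root of $P$ and hence an eigenvalue. This two-line argument avoids the spectral bookkeeping, the algebra-versus-operator spectrum issue, and the separate treatment of $0$. Your route would adapt more readily if the target set were not algebraic, but here the goal is a finite spectrum and the characteristic-polynomial trick is both shorter and entirely self-contained.
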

\begin{proof}
According to Section \ref{Algebra T k-qr},
we have $\psi(Q_d^{(j)})=\mu(Q_d^{(j)})=0$ for every
index $j$ and $d\in\bZ_+$, unless
$\theta_j=1$ and $d=\ka_j$,
in which case $\psi(Q_d^{(j)})=\mu(Q_d^{(j)})=1$.

Let $j\in\Jt$ and let $P$ be the characteristic polynomial of $\mathbf{T_j}|_{\mathbf{H}^{(j)}_{\ka_j}}$.
Then $P(T_{c_j})Q_{\ka_j}^{(j)}=0$, and hence
we have
\[
P(\psi(T_{c_j}))=\psi(P(T_{c_j}))=\psi(P(T_{c_j})Q_{\ka_j}^{(j)})=0,
\]
from which it follows that
\[
\psi(T_{c_j})\in\operatorname{sp(\mathbf{T_j}|_{\mathbf{H}^{(j)}_{\ka_j}}}).\qedhere
\]
\end{proof}

\begin{lem}\label{maximal ideal space second inclusion infty}
Let $\psi=(\mu,\zeta)\in M(\cT_\lambda(\mathbf{d}))$ such that
$\mu\in M_\theta(\ka_\theta)$ for some
$\theta\in\Theta$ and $\ka_\theta\in\bZ_+^\theta$.
Then
$\zeta_j\in\widehat{\operatorname{ess-sp}(T_{c_j})}$ for every
$j\in\Jtc$.
\end{lem}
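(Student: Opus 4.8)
The plan is to exploit the fact that the functionals in $M_\infty$ kill compact operators, so that for $j\in\Jtc$ the coordinate $\zeta_j=\psi(T_{c_j})$ must be detected ``at infinity'' in the $j$-th variable. Concretely, recall from Section \ref{Algebra T k-qr} that since $\mu\in M_\theta(\ka_\theta)$ with $j\in\Jtc$ we have $\mu(Q_d^{(j)})=0$ for every $d\in\bZ_+$; in particular $\psi$ annihilates every finite sum of the projections $Q_d^{(j)}$ and hence every operator of the form $\mathbf{T_j}$ restricted to a finite direct sum $\bigoplus_{d\le N}\mathbf{H}^{(j)}_d$, i.e.\ every ``finite truncation'' of $\boldsymbol{T_j}$ in the $j$-th slot. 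The first step is therefore to argue that $\psi(T_{c_j})$ depends only on the ``tail behaviour'' of $\boldsymbol{T_j}$: if two operators in $\cT_j$ agree on all but finitely many of the blocks $\mathbf{H}^{(j)}_d$, they are assigned the same value by $\psi$, because their difference is a finite-rank operator lying in the kernel of $\psi$ (this uses $j\in\Jtc$ and that the relevant $Q^{(j)}_d$ lie in $\cT_\lambda(L^\infty_{k-qr})\subset\cT_\lambda(\mathbf d)$ by the Theorem quoted from \cite{Bauer2012,Bauer2013,Bauer2015}).

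The second step is to identify which values survive this ``tail'' restriction. For each $N$, write $\boldsymbol{T_j}=\boldsymbol{T_j}^{(\le N)}\oplus\boldsymbol{T_j}^{(>N)}$ according to $\bigoplus_{d\le N}\mathbf H^{(j)}_d$ and its complement; the first summand is finite rank, so $\psi(T_{c_j})$ equals the value $\psi$ assigns to (the operator corresponding to) $\boldsymbol{T_j}^{(>N)}$. Consequently $\psi(T_{c_j})$ must lie in the spectrum of $\boldsymbol{T_j}^{(>N)}$ for \emph{every} $N$, and in fact in its polynomially convex hull since $\psi$ is a character of the commutative Banach algebra $\cT_j$ (so $\psi(T_{c_j})\in\widehat{\spc(\boldsymbol{T_j}^{(>N)})}=M(\cT_j)$ restricted to that tail). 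The intersection over all $N$ of the spectra $\spc(\boldsymbol{T_j}^{(>N)})$ is exactly the essential spectrum $\operatorname{ess-sp}(\boldsymbol{T_j})$ together with the accumulation points of the point spectrum — and by Proposition \ref{Tcj spectrum 3} those accumulation points already lie in $\operatorname{ess-sp}(\boldsymbol{T_j})$. Passing to polynomially convex hulls (and using Proposition \ref{Tcj spectrum 2}) then gives $\zeta_j=\psi(T_{c_j})\in\widehat{\operatorname{ess-sp}(T_{c_j})}$.

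A cleaner way to package the second step, which I would actually write out, is: suppose toward a contradiction that $\zeta_j\notin\widehat{\operatorname{ess-sp}(\boldsymbol{T_j})}$. Since $\zeta_j\in M(\cT_j)=\widehat{\spc(\boldsymbol{T_j})}=\spc_{pt}(\boldsymbol{T_j})\cup\widehat{\operatorname{ess-sp}}(\boldsymbol{T_j})$ by Proposition \ref{Tcj spectrum 2}, we must have $\zeta_j\in\spc_{pt}(\boldsymbol{T_j})$; by Corollary \ref{finite D} the set $D=\{d\in\bZ_+\colon\zeta_j\in\spc(\boldsymbol{T_j}|_{\mathbf H^{(j)}_d})\}$ is finite, say $D\subseteq\{0,\dots,N\}$. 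Then $\prod_{d\in D}P_d(T_{c_j})$, where $P_d$ is the characteristic polynomial of $\boldsymbol{T_j}|_{\mathbf H^{(j)}_d}$, is a polynomial $R$ in $T_{c_j}$ with $R(\zeta_j)=0$ but such that $R(\boldsymbol{T_j})$ is invertible on $\bigoplus_{d>N}\mathbf H^{(j)}_d$ and vanishes on $\bigoplus_{d\le N}\mathbf H^{(j)}_d$; multiplying by a suitable resolvent-type element and using that $\psi$ annihilates $Q^{(j)}_0+\cdots+Q^{(j)}_N$ (because $j\in\Jtc$) shows $\psi$ assigns the value $1$ to an operator that also vanishes, a contradiction. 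I expect the main obstacle to be the bookkeeping in this last contradiction argument — namely producing, from the finiteness of $D$, an explicit element of $\widetilde{\cT_\lambda(\mathbf d)}$ on which $\psi$ takes incompatible values — which requires care because $\boldsymbol{T_j}$ need not be invertible on the tail unless one first subtracts off the finitely many ``resonant'' blocks; handling the algebra of functional calculus in a non-$C^*$, merely polynomially convex setting is where one has to be precise rather than invoke continuous functional calculus.
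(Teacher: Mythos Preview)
Your setup coincides with the paper's: argue by contradiction, use Proposition~\ref{Tcj spectrum 2} to place $\zeta_j$ in $\spc_{pt}(\mathbf{T_j})$, invoke Corollary~\ref{finite D} to make $D=\{d:\zeta_j\in\spc(\mathbf{T_j}|_{\mathbf H^{(j)}_d})\}$ finite, and exploit $\psi(Q_d^{(j)})=0$ for all $d$ when $j\in\Jtc$. The gap is in how you close the contradiction. In your first sketch, the operator $(I-\widetilde Q_N^{(j)})T_{c_j}$ acquires $0$ in its spectrum (it vanishes on the range of $\widetilde Q_N^{(j)}$), so you only obtain $\zeta_j\in\widehat{\spc(\mathbf{T_j}^{(>N)})\cup\{0\}}$; and even after fixing that, the passage from $\bigcap_N\widehat{\spc(\mathbf{T_j}^{(>N)})}$ to $\widehat{\ess(\mathbf{T_j})}$ needs an argument. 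In your second sketch the claim that $R(\mathbf{T_j})=\prod_{d\in D}P_d(\mathbf{T_j})$ is invertible on the tail is generally false---the roots of the $P_d$ other than $\zeta_j$ may well be eigenvalues on blocks outside $D$---and, more basically, $\psi(R(T_{c_j}))=R(\zeta_j)=0$ is a tautology rather than a contradiction: to contradict it you would need an inverse of $R(T_{c_j})$ modulo a $\psi$-null ideal inside the Banach algebra, and (as you note) no continuous functional calculus is available to produce one.

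The paper bypasses all of this with a one-line perturbation. Set $P=\sum_{d\in D}Q_d^{(j)}$, so $\psi(P)=0$ and hence $\psi(T_{c_j}+wP)=\zeta_j$ for every $w\in\bC$. Choose $w$ so that $\zeta_j\notin\bigcup_{d\in D}\spc(\mathbf{T_j}|_{\mathbf H_d^{(j)}})+w$. Applying Proposition~\ref{Tcj spectrum 2} to $T_{c_j}+wP$ (whose essential spectrum is unchanged, since the perturbation is finite rank in the $j$-th tensor factor) gives
\[
\widehat{\spc(T_{c_j}+wP)}
=\widehat{\ess}(\mathbf{T_j})
\cup\bigcup_{d\notin D}\spc(\mathbf{T_j}|_{\mathbf H_d^{(j)}})
\cup\Bigl[\bigcup_{d\in D}\spc(\mathbf{T_j}|_{\mathbf H_d^{(j)}})+w\Bigr],
\]
and $\zeta_j$ lies in none of these three pieces (respectively by hypothesis, by the definition of $D$, and by the choice of $w$). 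But $\zeta_j=\psi(T_{c_j}+wP)$ must lie in $\widehat{\spc(T_{c_j}+wP)}$, a contradiction. This is precisely your ``alter the finite blocks so $\zeta_j$ vanishes from the spectrum'' idea, executed as an additive shift rather than a polynomial-and-resolvent construction---which is exactly what makes it go through without any functional calculus.
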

\begin{proof}
Suppose there is some $j\in\Jtc$ such that
$\zeta_j\in M(\cT_j)\backslash\widehat{\operatorname{ess-sp}(T_{c_j})}$.
By Proposition \ref{Tcj spectrum 2} and Corollary \ref{finite D},
this means that $\zeta_j\in\operatorname{sp}_{pt}(\mathbf{T_j})$
and the set
\[
D=\{d\in\bZ_+\colon \zeta_j\in\operatorname{sp}(\mathbf{T_j}|_{\mathbf{H}_d^{(j)}})\}
\]
is finite. Consider the orthogonal projection $P=\bigoplus_{d\in D}Q^{(j)}_d$
and note that $\psi(P)=0$.

Let $w\in\bC$ be such that $\zeta_j\notin\bigcup_{d\in D}\operatorname{sp}(\mathbf{T_j}|_{\mathbf{H}_d^{(j)}})+w$.
One easily sees that
\begin{align*}
\widehat{\operatorname{sp}(T_{c_j}+wP)}&=
\widehat{\operatorname{ess-sp}}(T_{c_j}+wP)
\cup\operatorname{sp}_{pt}(T_{c_j}+wP)\\&=
\widehat{\operatorname{ess-sp}}(T_{c_j})
\cup
\bigcup_{d\in\bZ_+\backslash D}
\operatorname{sp}(\mathbf{T_j}|_{\mathbf{H}_d^{(j)}})
\cup
\left[
\bigcup_{d\in D}
\operatorname{sp}(\mathbf{T_j}|_{\mathbf{H}_d^{(j)}})+w
\right],
\end{align*}
and so $\zeta_j\notin\widehat{\operatorname{sp}(T_{c_j}+wP)}$.
But then we have the contradiction
\[
\zeta_j=\psi(T_{c_j})=\psi(T_{c_j}+wP)\in\widehat{\operatorname{sp}(T_{c_j}+wP)}.\qedhere
\]
\end{proof}

\begin{lem}\label{multiplicative functionals representation}
Let $\theta\in\Theta$ and $\ka_\theta\in\bZ_+^\theta$.
Consider the sets $D_j$ given by
\[
D_j=
\begin{cases}
\spc(\mathbf{T_j}|_{\mathbf{H}_{\ka_j}^{(j)}}),\quad &
j\in\Jt,\\
\operatorname{ess-sp}(\mathbf{T_j}),\quad &
j\in\Jtc.
\end{cases}
\]
Then we have $M_\theta(\ka_\theta)\times D_1\times\cdots\times D_m\subset M(\cT_\lambda(\mathbf{d}))$.

Moreover, if $\psi\in M_\theta(\ka_\theta)\times D_1\times\cdots\times D_m$,
then there is a net $(g_\alpha)_\alpha$ such that
\[
\lim_\alpha\|(T-\psi(T)I)g_\alpha\|=0.
\]
\end{lem}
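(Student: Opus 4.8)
The plan is to prove the two assertions in Lemma \ref{multiplicative functionals representation} together by exhibiting, for each point $\psi=(\mu,\zeta)\in M_\theta(\ka_\theta)\times D_1\times\cdots\times D_m$, an explicit approximating net $(g_\alpha)_\alpha$ of unit vectors in $\cA^2_\lambda(\bB^n)$ for which $\|(T-\psi(T)I)g_\alpha\|\to0$ for every $T\in\cT_\lambda(\mathbf{d})$; once such a net exists, the rule $\psi(T)=\lim_\alpha\pdi{Tg_\alpha,g_\alpha}$ defines a multiplicative functional (multiplicativity follows from the standard Cauchy--Schwarz argument already used in the proof of Proposition \ref{maximal ideal space Tc1cm}), which simultaneously yields $\psi\in M(\cT_\lambda(\mathbf{d}))$ and the ``moreover'' clause. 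So the whole proof reduces to constructing the net, and then checking it kills both $\cT_\lambda(L^\infty_{k-qr})$-generators and the $T_{c_j}$'s with the right eigenvalues.

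I would build the net in the tensor-product picture via the unitary $U\colon\cA^2_\lambda(\bB^n)\to\bigotimes_{j=1}^m\cA^2(\bB^{k_j})$, taking $g_\alpha=U^*(g^{(1)}_\alpha\otimes\cdots\otimes g^{(m)}_\alpha)$ with one factor built per index $j$. For $j\in J_\theta$: since $\zeta_j\in D_j=\spc(\mathbf{T_j}|_{\mathbf{H}^{(j)}_{\ka_j}})$ is an eigenvalue of a finite-dimensional operator, pick a fixed unit eigenvector $v_j\in\mathbf{H}^{(j)}_{\ka_j}$ with $\mathbf{T_j}v_j=\zeta_jv_j$ and let $g^{(j)}_\alpha=v_j$ be constant. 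For $j\in J_{\cmp{\theta}}$: since $\zeta_j\in D_j=\operatorname{ess-sp}(\mathbf{T_j})$, use condition \eqref{condition cj} to choose unit vectors $g^{(j)}_{\zeta_j,d}\in\mathbf{H}^{(j)}_d$ with $\|(\mathbf{T_j}-\zeta_jI)g^{(j)}_{\zeta_j,d}\|\to0$ as $d\to\infty$. The net index $\alpha$ then ranges over tuples $(d_j)_{j\in J_{\cmp{\theta}}}$ directed by $d_j\to\infty$ for all such $j$ (equivalently, one can index by the net $\{\ka^\alpha\}$ in $\bZ_+^m$ reaching $\mu\in M_\theta(\ka_\theta)$, whose $j$-th coordinate tends to $\infty$ iff $\theta_j=0$, and set $d_j=\ka^\alpha_j$). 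Because $\|v_j\|=1$ and each $\|g^{(j)}_{\zeta_j,d_j}\|=1$, the tensor $g_\alpha$ is a unit vector.

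Next I would verify the two kinds of limits. For the generators $T_{c_j}$: using $UT_{c_j}U^*=I\otimes\cdots\otimes\mathbf{T_j}\otimes\cdots\otimes I$, one gets $\|(T_{c_j}-\zeta_jI)g_\alpha\|=\|(\mathbf{T_j}-\zeta_jI)g^{(j)}_\alpha\|$, which is $0$ for $j\in J_\theta$ and $\to0$ for $j\in J_{\cmp{\theta}}$; hence $\psi(T_{c_j})=\zeta_j$. For the $k$-quasi-radial part: any $D\in\cT_\lambda(L^\infty_{k-qr})$ is diagonal with eigenvalue depending only on $(|\alpha_{(1)}|,\ldots,|\alpha_{(m)}|)$, and $g_\alpha$ lies in $H_{\ka^\alpha}$ with $\ka^\alpha_j=\ka_j$ for $j\in J_\theta$ and $\ka^\alpha_j=d_j\to\infty$ for $j\in J_{\cmp{\theta}}$; so $Dg_\alpha=\widetilde{\gamma_D}(\ka^\alpha)g_\alpha$ and $\widetilde{\gamma_D}(\ka^\alpha)\to\mu(D)$ by the very definition of $\mu\in M_\theta(\ka_\theta)$ (this is precisely the statement that the functionals in $M_\theta(\ka_\theta)$ are those reached by such nets $\{\ka^\alpha\}$; in particular $\mu(Q^{(j)}_{d})=\lim\pdi{Q^{(j)}_d g_\alpha,g_\alpha}$ comes out $1$ exactly when $\theta_j=1,\ d=\ka_j$). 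Combining, $\|(D-\mu(D)I)g_\alpha\|\to0$. A density/continuity step then extends these limits from the dense subalgebra $\widetilde{\cT_\lambda(\mathbf{d})}$ of finite sums $\sum_\rho D_{\gamma_\rho}T^\rho$ to all of $\cT_\lambda(\mathbf{d})$: on $\widetilde{\cT_\lambda(\mathbf{d})}$ one checks $\|(T-\psi(T)I)g_\alpha\|\to0$ where $\psi(T)=\sum_\rho\mu(D_{\gamma_\rho})\zeta^\rho$ by multiplying out, using that each $T_{c_j}$ commutes with everything and is a.e.\ bounded, and then $\|g_\alpha\|=1$ plus a uniform norm bound passes the estimate to norm-limits of such $T$.

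The main obstacle I anticipate is the bookkeeping of the directed-index structure so that a \emph{single} net works simultaneously for the infinitely many generators of $\cT_\lambda(L^\infty_{k-qr})$ and for the $T_{c_j}$'s: one must make sure the net $\{\ka^\alpha\}$ chosen to realize $\mu$ on $M_\theta(\ka_\theta)$ can be refined (passing to a subnet) so that along it the reproducing-kernel-type vectors $g^{(j)}_{\zeta_j,\ka^\alpha_j}$ also furnish the $\operatorname{ess-sp}$ approximation, i.e.\ that $\ka^\alpha_j\to\infty$ genuinely for $j\in J_{\cmp{\theta}}$ and stays equal to $\ka_j$ for $j\in J_\theta$ — this is exactly the content of the stratification recalled in Section \ref{Algebra T k-qr}, so it is available, but it needs to be invoked carefully. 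A secondary (routine) point is the Cauchy--Schwarz verification that $T\mapsto\lim_\alpha\pdi{Tg_\alpha,g_\alpha}$ is genuinely multiplicative and bounded, which is identical to the argument in Proposition \ref{maximal ideal space Tc1cm} and which I would simply cite.
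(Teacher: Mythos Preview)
Your proposal is correct and follows essentially the same approach as the paper: both construct the approximating net as $g_\alpha=U^*(g^{(1)}_\alpha\otimes\cdots\otimes g^{(m)}_\alpha)$ with a fixed eigenvector in $\mathbf{H}^{(j)}_{\ka_j}$ for $j\in J_\theta$ and the sequence from condition \eqref{condition cj} for $j\in J_{\cmp{\theta}}$, indexed by a net $(\ka_\alpha)_\alpha$ realizing $\mu$, then verify the limits on the generators $T_{c_j}$ and $D_\gamma$ separately before extending from $\widetilde{\cT_\lambda(\mathbf{d})}$ to $\cT_\lambda(\mathbf{d})$ by density and the Cauchy--Schwarz argument. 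The bookkeeping concern you flag is exactly the point the paper handles by simply assuming (as one may after passing to a subnet) that $(\ka_\alpha)_j=\ka_j$ for all $\alpha$ when $j\in J_\theta$.
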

\begin{proof}
Let $\psi=(\mu,\zeta_1,\ldots,\zeta_m)\in M_\theta(\ka_\theta)\times D_1\times\cdots\times D_m$.
Consider a net $(\ka_\alpha)_\alpha$ in $\bZ_+^m$ such that $\mu=\lim_\alpha\ka_\alpha$.
Suppose that
$\Jt=\{j_1,\ldots,j_{|\theta|}\}$ and
$\ka_\theta=(\ka_{j_1},\ldots,\ka_{j_{|\theta|}})$.

Since $\zeta_j\in\operatorname{sp}(\mathbf{T}_j)$ for each $j$, we can define the sequences $(f^{(j)}_{\zeta_j,k})_k$ as in the proof of Proposition \ref{maximal ideal space Tc1cm}.
If $j\in\Jt$, we can assume that
$(\ka_\alpha)_j=\ka_j$ for all $\alpha$
 and that $f^{(j)}_{\zeta_j,k}\in\mathbf{H}_{(\ka_\theta)_j}^{(j)}$ for each $k$.

Thus we have
\[
\lim_{k\to\infty}\|(\mathbf{T_j}-\zeta_j)f^{(j)}_{\zeta_j,k}\|=0,
\]
for each $j=1,\ldots,m$.

Using the same directed set of indices $\alpha$ we define a net $(g_\alpha)_\alpha$ by
\[
g_\alpha=U^*(f^{(1)}_{\zeta_1,\ka^1_\alpha}\otimes\cdots\otimes f^{(m)}_{\zeta_m,\ka^m_\alpha})\in H_{\ka_\alpha}.
\]

Then we have
\[
\lim_\alpha\|(T_{c_j}-\zeta_j)g_\alpha\|=0,\quad j=1,\ldots,m,
\]
and, since $D_\gamma(g_\alpha)=\gamma(\ka_\alpha)g_\alpha$,
\[
\lim_\alpha\|(D_\gamma-\mu)g_\alpha\|=
\lim_\alpha|\gamma(\ka_\alpha)-\mu|=0.
\]

Thus, by these properties, for an operator $A\in\widetilde{\cT_\lambda(\mathbf{d})}$ of the form
$A=\sum_{\rho\in F}D_{\gamma_\rho}T^\rho$
we get
\[
\lim_\alpha\|(A-\psi(A))g_\alpha\|=
\lim_\alpha\|(A-\sum_{\rho\in F_A}\gamma_\rho(\ka_\alpha)\zeta^\rho)g_\alpha\|=0.
\]
We can extend this formula
to the whole algebra $\cT_\lambda(\mathbf{d})$
and, in particular, we can define the functional
\[
\psi(T)=
\lim_\alpha\pdi{Tg_\alpha,g_\alpha},\quad T\in\cT_\lambda(\mathbf{d}),
\]
obtaining thus a multiplicative functional, corresponding to the point $(\mu,\zeta)$.
\end{proof}

\begin{thm}
\label{Maximal ideal space Tdx}
The compact set of maximal ideals
$M(\cT_\lambda(\mathbf{d}))$ of the commutative
Banach algebra
$\cT_\lambda(\mathbf{d})$ coincides with the set
\[
M:=
\bigcup_{\theta\in\Theta}
\left[
\bigcup_{\kappa_\theta\in\bZ_+^{\theta}}
M_\theta(\kappa_\theta)
\times
M_{\theta,\ka_\theta,1}\times\cdots\times M_{\theta,\ka_\theta,m}
\right],
\]
where
\[
M_{\theta,\ka_\theta,j}=
\begin{cases}
\spc(\mathbf{T_j}|_{\mathbf{H}_{\ka_j}^{(j)}}),\quad &
j\in\Jt,\\
\widehat{\operatorname{ess-sp}}(\mathbf{T_j}),\quad &
j\in\Jtc.
\end{cases}
\]

The Gelfand transform is generated by the following mapping
of elements of $\widetilde{\cT_\lambda(\mathbf{d})}$:
\begin{align*}
A:=\sum_{\rho\in F_A}D_{\gamma_\rho}
T^\rho\longmapsto
\sum_{\rho\in F_A}
D_\gamma(\mu)\zeta^\rho.
\end{align*}
\end{thm}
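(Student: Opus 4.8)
The plan is to prove the two set-inclusions $M(\cT_\lambda(\mathbf{d}))\subset M$ and $M\subset M(\cT_\lambda(\mathbf{d}))$ separately, and then verify the stated formula for the Gelfand transform by density.

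\textbf{The inclusion $M(\cT_\lambda(\mathbf{d}))\subset M$.} Let $\psi\in M(\cT_\lambda(\mathbf{d}))$. Via the injection $\Phi$ we identify $\psi$ with a pair $(\mu,\zeta)$, where $\mu\in M(\cT_\lambda(L^\infty_{k-qr}))$ and $\zeta=(\zeta_1,\ldots,\zeta_m)\in\widehat{\spc(T_{c_1})}\times\cdots\times\widehat{\spc(T_{c_m})}$. By the decomposition \eqref{decomposition Mkqr} there is a unique $\theta\in\Theta$ and a unique $\ka_\theta\in\bZ_+^\theta$ with $\mu\in M_\theta(\ka_\theta)$. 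Now apply Lemma \ref{maximal ideal space second inclusion finite}: for $j\in\Jt$ we get $\zeta_j\in\spc(\mathbf{T_j}|_{\mathbf{H}^{(j)}_{\ka_j}})=M_{\theta,\ka_\theta,j}$. For $j\in\Jtc$, apply Lemma \ref{maximal ideal space second inclusion infty} to get $\zeta_j\in\widehat{\operatorname{ess-sp}(T_{c_j})}=M_{\theta,\ka_\theta,j}$. Hence $\psi=(\mu,\zeta)$ lies in the corresponding product piece of $M$, which gives the inclusion.

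\textbf{The inclusion $M\subset M(\cT_\lambda(\mathbf{d}))$.} Fix $\theta$, $\ka_\theta$, and a point $\psi=(\mu,\zeta_1,\ldots,\zeta_m)$ in $M_\theta(\ka_\theta)\times M_{\theta,\ka_\theta,1}\times\cdots\times M_{\theta,\ka_\theta,m}$. The subtlety is that for $j\in\Jtc$ the set $M_{\theta,\ka_\theta,j}$ is the \emph{polynomially convex hull} of the essential spectrum, which may be strictly larger than $\operatorname{ess-sp}(\mathbf{T_j})$ itself, so Lemma \ref{multiplicative functionals representation} does not directly apply to all points of $M$. I would handle this in two stages. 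First, for points where each $\zeta_j$ with $j\in\Jtc$ actually lies in $\operatorname{ess-sp}(\mathbf{T_j})$ (not merely its hull), Lemma \ref{multiplicative functionals representation} produces an approximating net $(g_\alpha)_\alpha$ and hence a genuine multiplicative functional mapping to $(\mu,\zeta)$; this shows $M_\theta(\ka_\theta)\times D_1\times\cdots\times D_m\subset M(\cT_\lambda(\mathbf{d}))$ with $D_j$ as in that lemma. Second, I would pass to the polynomially convex hull: $M(\cT_\lambda(\mathbf{d}))$, being the maximal ideal space of a commutative Banach algebra embedded in a product of $M(\cT_j)$'s and $M(\cT_\lambda(L^\infty_{k-qr}))$, is polynomially convex, and the fibering over $M_\theta(\ka_\theta)$ is clopen-compatible in the sense that the functionals in $M_\theta(\ka_\theta)$ are precisely those for which $\psi(Q^{(j)}_d)$ takes the prescribed $0/1$ pattern. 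Restricting the joint Gelfand spectrum to this fiber, one sees that the $\zeta$-coordinates range over $\widehat{D_1\times\cdots\times D_m}=\widehat{D_1}\times\cdots\times\widehat{D_m}$, which for $j\in\Jt$ equals $\spc(\mathbf{T_j}|_{\mathbf{H}^{(j)}_{\ka_j}})$ (a finite set, already polynomially convex) and for $j\in\Jtc$ equals $\widehat{\operatorname{ess-sp}}(\mathbf{T_j})$. This is exactly $M_{\theta,\ka_\theta,j}$, so the whole piece lies in $M(\cT_\lambda(\mathbf{d}))$.

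\textbf{The Gelfand transform.} For $A=\sum_{\rho\in F_A}D_{\gamma_\rho}T^\rho\in\widetilde{\cT_\lambda(\mathbf{d})}$ and $\psi=(\mu,\zeta)\in M(\cT_\lambda(\mathbf{d}))$, multiplicativity and linearity of $\psi$ give $\psi(A)=\sum_{\rho\in F_A}\psi(D_{\gamma_\rho})\psi(T_{c_1})^{\rho_1}\cdots\psi(T_{c_m})^{\rho_m}=\sum_{\rho\in F_A}D_\gamma(\mu)\,\zeta^\rho$, where $D_\gamma(\mu)$ denotes the value of the Gelfand transform of $D_{\gamma_\rho}\in\cT_\lambda(L^\infty_{k-qr})$ at $\mu$ (i.e., $\psi|_{\cT_\lambda(L^\infty_{k-qr})}(D_{\gamma_\rho})$). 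Since $\widetilde{\cT_\lambda(\mathbf{d})}$ is dense in $\cT_\lambda(\mathbf{d})$ and the Gelfand transform is continuous, this formula determines $\widehat{A}$ on all of $\cT_\lambda(\mathbf{d})$.

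The main obstacle is the second inclusion and specifically the passage to the polynomially convex hull in the $j\in\Jtc$ coordinates: one must argue carefully that adjoining the ``missing'' hull points of $\operatorname{ess-sp}(\mathbf{T_j})$ genuinely produces multiplicative functionals of $\cT_\lambda(\mathbf{d})$ and not merely of the smaller algebra $\cT(c_1,\ldots,c_m)$, i.e. that the $\mu$-coordinate can still be prescribed freely within $M_\theta(\ka_\theta)$ independently of the hull-completion in the $\zeta$-coordinates. This is where the clopen structure of the fibers $M_\theta(\ka_\theta)$ (governed by the idempotents $Q^{(j)}_d\in\cT_\lambda(L^\infty_{k-qr})$, which already belong to the algebra) and the product structure of polynomially convex hulls must be combined.
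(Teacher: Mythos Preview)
Your treatment of the inclusion $M(\cT_\lambda(\mathbf{d}))\subset M$, of the Gelfand transform formula, and of the first stage of the reverse inclusion (Lemma \ref{multiplicative functionals representation} giving $M_\theta(\ka_\theta)\times D_1\times\cdots\times D_m\subset M(\cT_\lambda(\mathbf{d}))$) are correct and coincide with the paper. The gap is the hull-passage step, which you flag as the main obstacle but do not actually close. Your appeal to ``polynomial convexity of $M(\cT_\lambda(\mathbf{d}))$'' has no direct meaning here, since $\cT_\lambda(\mathbf{d})$ is not finitely generated, and the ``clopen structure'' heuristic is misleading: $M_\theta(\ka_\theta)$ is in general only closed, not clopen (for $j\in\Jtc$ it is cut out by the countable family of conditions $\mu(Q^{(j)}_d)=0$, $d\in\bZ_+$). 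As written, nothing establishes that the fiber over a fixed $\mu$ is polynomially convex in $\bC^m$, nor that an arbitrary $\mu\in M_\theta(\ka_\theta)$ can be paired with an arbitrary $\zeta\in\widehat{D}$.

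The paper closes this gap by a direct estimate that, in effect, \emph{freezes} $\mu$ and reduces to an honest polynomial-hull argument in $\bC^m$. Fix $\mu\in M_\theta(\ka_\theta)$ and $\zeta\in\widehat{D}$ with $D=D_1\times\cdots\times D_m$. For $A=\sum_\rho D_{\gamma_\rho}T^\rho\in\widetilde{\cT_\lambda(\mathbf{d})}$ regard $p(z):=\sum_\rho\gamma_\rho(\mu)z^\rho$ as an ordinary polynomial in $z\in\bC^m$; then by the definition of the polynomially convex hull together with Lemma \ref{multiplicative functionals representation},
\[
\Bigl|\sum_\rho\gamma_\rho(\mu)\zeta^\rho\Bigr|=|p(\zeta)|\leq\sup_{z\in D}|p(z)|\leq\sup_{\varphi\in M_\theta(\ka_\theta)\times D}|\varphi(A)|\leq\|A\|.
\]
Hence $A\mapsto\sum_\rho\gamma_\rho(\mu)\zeta^\rho$ is a well-defined bounded multiplicative functional on the dense subalgebra $\widetilde{\cT_\lambda(\mathbf{d})}$, and extends to a character of $\cT_\lambda(\mathbf{d})$ corresponding to $(\mu,\zeta)$. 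This single chain of inequalities is exactly the missing ingredient in your sketch.
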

\begin{proof}
We prove the inclusion $M\subset M(\cT_\lambda(\mathbf{d}))$.
The other inclusion follows from Lemmas \ref{maximal ideal space second inclusion finite} and \ref{maximal ideal space second inclusion infty}.

Fix some $\theta\in\Theta$ and $\ka_\theta\in\bZ_+^\theta$
and consider the sets $D_1,\ldots,D_m$ as in Lemma \ref{multiplicative functionals representation}.
By this lemma,
$M_\theta(\ka_\theta)\times D_1\times\cdots \times D_m\subset M(\cT_\lambda(\mathbf{d}))$.

Write $D=D_1\times\cdots\times D_m$. Then we clearly have $\widehat{D_j}=M_{\theta,\ka_\theta,j}$
and $\widehat{D}=\widehat{D_1}\times\cdots\times\widehat{D_m}$,
and thus we need to prove that $M_\theta(\ka_\theta)\times\widehat{D}\subset M(\mathcal{T}_\lambda(\mathbf{d}))$.

Let $(\mu,\zeta)$ be an element of the left-hand side set in this inclusion.
By the definition of the polynomial convex hull and Lemma \ref{multiplicative functionals representation},
for any operator $A\in\widetilde{\cT_\lambda(\mathbf{d})}$ of the form
$A=\sum_{\rho\in F_A}D_{\gamma_\rho}T^\rho$ we have
\[
|\sum_{\rho\in F_A}\gamma_\rho(\mu)\zeta^\rho|
\leq
\sup_{z\in D}|\sum_{\rho\in F_A}\gamma_\rho(\mu)z^\rho|
\leq
\sup_{\varphi\in M_\theta(\ka_\theta)\times D}|\varphi(A)|
\leq \|A\|.
\]

Therefore, there is a well-defined bounded multiplicative functional $\psi$ on $\widetilde{\cT_\lambda(\mathbf{d})}$ given by
\[
\psi(A)=\sum_{\rho\in F_A}\gamma_\rho(\mu)\zeta^\rho,\quad
A=\sum_{\rho\in F_A}D_{\gamma_\rho}T^\rho\in\widetilde{\cT_\lambda(\mathbf{d})}.
\]
This functional extends then to a multiplicative functional on $\cT_\lambda(\mathbf{d})$, which corresponds to the point $(\mu,\zeta_1,\ldots,\zeta_m)$.
\end{proof}


\section{Spectral invariance}
\label{sec: Spectral invariance}

Let $\mathcal{B}$ be a Banach algebra of operators acting on $\cA^2_\lambda(\bB^n)$.
We say that $\mathcal{B}$ is \emph{spectral invariant} if each operator from $\mathcal{B}$ invertible in $\mathcal{L}(\cA^2_\lambda(\bB^n))$ is invertible in $\mathcal{B}$.
In this case, the spectrum of an operator $T$ in $\mathcal{B}$, denoted by $\sigma_{\mathcal{B}}(T)$,
equals its spectrum in the $C^*$-algebra $\mathcal{L}(\cA^2_\lambda(\bB^n))$.

\begin{prop}
The algebra $\cT_\lambda(\boldsymbol{d})$ is inverse closed if and only if
$\operatorname{sp}(T_{c_j})$ is polynomially convex for every $j=1,\ldots,m$.

Thus, in this case,
\[
\sigma_{\cT_\lambda(\boldsymbol{d})}(T)
=
\operatorname{sp}(T),
\]
for every $T\in\cT_\lambda(\boldsymbol{d})$.
\end{prop}
\begin{proof}
Suppose that $\operatorname{sp}(T_{c_j})$ is polynomially convex for every $j=1,\ldots,m$.
Let $T\in\cT_\lambda(\boldsymbol{d})$ be invertible in the $C^*$-algebra $\mathcal{L}(\cA^2_\lambda(\bB^n))$ and let $T^{-1}$ be its inverse.

By Lemma \ref{multiplicative functionals representation} and Theorem \ref{Maximal ideal space Tdx},
for every $\psi\in M(\cT_\lambda(\boldsymbol{d}))$ there is a net of normalized vectors $(g_\alpha)_\alpha$
such that
\[
\lim_\alpha\|(T-\psi(T)I)g_\alpha\|=0,\quad T\in\cT_\lambda(\boldsymbol{d}).
\]

Thus we have
\[
1=\pdi{T^{-1}Tg_\alpha,g_\alpha}=
\psi(T)\pdi{T^{-1}g_\alpha,g_\alpha}+\pdi{T^{-1}(T-\psi(T)I)g_\alpha,g_\alpha},
\]
where the second summand converges to zero.
In particular, $\psi(T)\neq0$ and, thus, $T$ is invertible in $\cT_\lambda(\boldsymbol{d})$.

Conversely, suppose that $\cT_\lambda(\boldsymbol{d})$ is spectral invariant.
Then
\[
\sigma_{\cT_\lambda(\boldsymbol{d})}(T_{c_j})=\operatorname{sp}(T_{c_j}),
\]
but, by Theorem \ref{Maximal ideal space Tdx},
$\sigma_{\cT_\lambda(\boldsymbol{d})}(T_{c_j})=\widehat{\operatorname{sp}(T_{c_j})}$.
\end{proof}

\section{The radical of $\mathcal{T}_\lambda(\mathbf{d})$}
\label{sec: Radical}

In this section we will assume that $c_j$,
regarded as a function defined on $\bB^{k_j}$,
can be continuously extended to the closed unit ball of $\overline{\mathbb{B}^{k_j}}$.
By Proposition \ref{Berezin transform} and the remarks after it, this stronger condition implies \eqref{condition cj}.
Furthermore, as is well known, in this case the operators $\mathbf{T_j}$ acting on $\mathcal{A}^2(\mathbb{B}^{k_j})$ commute with their respective adjoints $\mathbf{T_j}^*$ modulo a compact operator.

Given $j\in\{1,\ldots,m\}$ and $\ka\in\bZ_+^m$ we will write
\[
\widetilde{Q}_{\ka_j}^{(j)}=\bigoplus_{0\leq d\leq \ka_j}Q^{(j)}_d.
\]

Thus $\widetilde{Q}_{\ka_j}^{(j)}$ maps the Bergman space $\mathcal{A}_\lambda^2(\mathbb{B}^{n})$
onto $\bigoplus_{\rho\in\bZ_+^m,0\leq \rho_j\leq \ka_j}H_\rho$.
Moreover, we note that
\[
UQ_d^{(j)}U^*=I\otimes\cdots\otimes\mathbf{Q}^{(j)}_d\otimes\cdots\otimes I,
\]
where $\mathbf{Q}^{(j)}_d$ is the orthogonal projection from $\mathcal{A}^2(\mathbb{B}^{k_j})$ onto $\mathbf{H}^{(j)}_d$.
Similary we have
\[
U\widetilde{Q}^{(j)}_{\ka_j}U^*=
I\otimes\cdots\otimes\widetilde{\mathbf{Q}}^{(j)}_{\ka_j}\otimes\cdots\otimes I,
\]
where
$\widetilde{\mathbf{Q}}^{(j)}_{\ka_j}$ is the orthogonal
projection from $\mathcal{A}^2(\mathbb{B}^{k_j})$ onto $\bigoplus_{0\leq d\leq \ka_j}\mathbf{H}^{(j)}_d$.

In particular, the projections $\widetilde{\mathbf{Q}}^{(j)}_{\ka_j}$ strongly converge to the identity operator on $\mathcal{A}^2(\mathbb{B}^{k_j})$,
as $\ka_j\to\infty$,
and if $T$ is an operator in $\cA^2_\lambda(\bB^n)$
such that, for some compact operator $K\in\iK(\mathcal{A}^2(\mathbb{B}^{k_j}))$,
we have
$UTU^*=I\otimes\cdots\otimes K\otimes\cdots\otimes I$,
being $K$ in the $j$-position,
then $\widetilde{Q}^{(j)}_{\ka_j}T$ converges in norm to $T$,
as $\ka_j\to\infty$.

Given $\{j_1,\ldots,j_L\}\subset\{1,\ldots,m\}$ and the projections
$\widetilde{Q}_{d_{j_1}}^{(j_1)},\ldots,\widetilde{Q}_{d_{j_L}}^{(j_L)}$,
we define
\begin{equation}
\label{projections QS def}
P_1=\widetilde{Q}_{d_{1}}^{(j_1)}
\end{equation}
and inductively
\begin{equation}
\label{projections QS def2}
P_{\ell+1}=\widetilde{Q}_{d_{j_{\ell+1}}}^{(j_{\ell+1})}-\widetilde{Q}_{d_{j_{\ell+1}}}^{(j_{\ell+1})}
(P_1+\cdots+P_{\ell}),
\quad\ell=1,\ldots,L-1.
\end{equation}

\begin{lem}\label{projections QS}
The projections $P_\ell$, $\ell=1,\ldots,L$,
are mutually orthogonal
and the space generated by the union of its ranges equals the space
generated by the union of the ranges of
$\widetilde{Q}_{d_{j_1}}^{(j_1)},\ldots,\widetilde{Q}_{d_{j_L}}^{(j_L)}$.
\end{lem}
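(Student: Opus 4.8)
The plan is to recognise this as the Gram--Schmidt orthogonalisation procedure applied to a \emph{commuting} family of orthogonal projections. Write $Q_\ell:=\widetilde{Q}_{d_{j_\ell}}^{(j_\ell)}$ for $\ell=1,\dots,L$. The first step is to note that $Q_1,\dots,Q_L$ mutually commute: since the indices $j_1,\dots,j_L$ are distinct, the tensor decomposition recorded before the lemma shows that $UQ_\ell U^*$ acts as $\widetilde{\mathbf{Q}}^{(j_\ell)}_{d_{j_\ell}}$ in the $j_\ell$-th tensor slot and as the identity in all the others, and projections supported in distinct slots commute. I will also use the elementary fact that for two commuting orthogonal projections $P,Q$ the operator $P\vee Q:=P+Q-PQ=I-(I-P)(I-Q)$ is again an orthogonal projection, with $\operatorname{ran}(P\vee Q)=\operatorname{ran}P+\operatorname{ran}Q$ (so this sum is automatically closed), as one sees from the identity $(P\vee Q)x=Px+Q(I-P)x$.

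Next I would prove, by induction on $\ell$, the following three statements simultaneously: (i) $P_\ell$ is an orthogonal projection; (ii) $P_1,\dots,P_\ell$ are mutually orthogonal; (iii) $R_\ell:=P_1+\cdots+P_\ell$ equals the join $Q_1\vee\cdots\vee Q_\ell$, i.e.\ the orthogonal projection onto $\operatorname{ran}Q_1+\cdots+\operatorname{ran}Q_\ell$. The case $\ell=1$ is trivial since $R_1=P_1=Q_1$. For the step $\ell\to\ell+1$: by (iii) the operator $R_\ell=I-(I-Q_1)\cdots(I-Q_\ell)$ is a polynomial in the commuting projections $Q_1,\dots,Q_\ell$, hence commutes with $Q_{\ell+1}$, so $P_{\ell+1}=Q_{\ell+1}(I-R_\ell)$ is a product of two commuting orthogonal projections and is therefore an orthogonal projection, giving (i). Using $R_\ell^2=R_\ell$ one gets $P_{\ell+1}R_\ell=R_\ell P_{\ell+1}=0$, and since $R_\ell P_i=P_i$ for $i\le\ell$ by the inductive form of (ii), we obtain $P_{\ell+1}P_i=P_{\ell+1}R_\ell P_i=0$, which is (ii). Finally $R_{\ell+1}=R_\ell+P_{\ell+1}=R_\ell+Q_{\ell+1}-R_\ell Q_{\ell+1}=R_\ell\vee Q_{\ell+1}$, and by the elementary fact above this is the orthogonal projection onto $\operatorname{ran}R_\ell+\operatorname{ran}Q_{\ell+1}=\operatorname{ran}Q_1+\cdots+\operatorname{ran}Q_{\ell+1}$, which is (iii).

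Taking $\ell=L$ then gives both conclusions of the lemma at once: the $P_\ell$ are mutually orthogonal, and consequently the (closed) span of the union of their ranges is $\operatorname{ran}(P_1+\cdots+P_L)=\operatorname{ran}R_L$, which by (iii) equals $\operatorname{ran}Q_1+\cdots+\operatorname{ran}Q_L$, the space generated by the union of the ranges of $\widetilde{Q}_{d_{j_1}}^{(j_1)},\dots,\widetilde{Q}_{d_{j_L}}^{(j_L)}$. I do not anticipate a serious obstacle; the argument is routine Gram--Schmidt for projections. The only points deserving care are the commutativity of the family $\{Q_\ell\}$ --- which is precisely where the tensor-product structure of the $\widetilde{Q}_{d_j}^{(j)}$ enters --- and the (automatic) closedness of finite sums of ranges of commuting projections, which makes the phrase ``space generated by the union of the ranges'' unambiguous and lets one describe it cleanly as $\operatorname{ran}(Q_1\vee\cdots\vee Q_L)$.
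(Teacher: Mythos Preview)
Your proposal is correct and follows essentially the same approach as the paper: both reduce the claim to the elementary fact that for two commuting orthogonal projections $Q_1,Q_2$, the operators $Q_1$ and $Q_2-Q_2Q_1$ are mutually orthogonal projections whose ranges together span $\operatorname{ran}Q_1+\operatorname{ran}Q_2$. The paper's proof simply states this two-projection fact and leaves the induction implicit, whereas you spell out the induction and make explicit the commutativity of the family (via the tensor structure) that the paper uses tacitly.
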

\begin{proof}
It follows from the fact that
if $Q_1$ and $Q_2$ are commuting orthogonal projections acting on a Hilbert space,
then $Q_1$ and $Q_2-Q_2Q_1$ are mutually orthogonal projections and
the space generated by the union of its ranges equal the space generated
by the union of the ranges of $Q_1$ and $Q_2$.
\end{proof}

It will be useful to consider algebras defined similarly as the algebra
$\cT_\lambda(\mathbf{d})$, but "omitting" some of the generators $T_{c_j}$.
We formalize this as follows.

Let $\theta\in\Theta$.
We denote by $\mathcal{T}_\lambda^{\theta}$ the Banach algebra generated by $\mathcal{T}_\lambda(L^\infty_{k-qr})$ and the operators $T_{c_j}$ if $\theta_j=1$.
We also denote by $\widetilde{\mathcal{T}}_\lambda^{\theta}$
the non-closed dense subalgebra generated by those elements,
consisting of finite sums of finite products of the generators.

We will denote by $\epsilon_j$ the $m$-tuple with its $j$-th entry equal $1$,
being $0$ the other ones. So, given $\theta\in\Theta$ and $j\in\Jt$,
the tuple $\theta-\epsilon_j$ equals $\theta$ except that its $j$-th entry is zero.

Let $\mathcal{J}^\theta$ be the closed $*$-ideal in $\mathcal{L}(\mathcal{A}_\lambda^2(\mathbb{B}^{n}))$ generated by all operators of the form
$U^*(I\otimes \cdots\otimes K_j\otimes\cdots\otimes I)U$, where $j\in\Jt$
and $K_j$ is a compact operator acting on $\mathcal{A}^2(\mathbb{B}^{k_j})$.
Since we are assuming the symbols $c_j$ to be continuous at the boundary of $\bB^{k_j}$,
we have
$[T_{c_j},T_{c_j}^*]\in\mathcal{J}^\theta$ for every $j\in\Jt$.

Consider now the $C^*$-algebra
$(\mathcal{T}_\lambda^{\theta})^*$ generated by $\mathcal{T}_\lambda^{\theta}$. By construction, the $C^*$-algebra
\[
\widehat{(\mathcal{T}_\lambda^{\theta})^*}:=(\mathcal{T}_\lambda^{\theta})^*/\mathcal{J}^\theta
\]
is commutative. Reasoning as in the remarks preceding Lemma 6.11 from \cite{Bauer2015}, consider the canonical projection
\[
\pi^\theta\colon
\mathcal{T}_\lambda^{\theta}
\longrightarrow
\mathcal{T}_\lambda^{\theta}/\mathcal{J}^\theta\cap\mathcal{T}_\lambda^{\theta}
\]
and note that
\[
\mathcal{T}_\lambda^{\theta}/\mathcal{J}^\theta\cap\mathcal{T}_\lambda^{\theta}
\cong
(\mathcal{T}_\lambda^{\theta}+\mathcal{J}^\theta)/\mathcal{J}^\theta
\subset
((\mathcal{T}_\lambda^{\theta})^*+\mathcal{J}^\theta)/\mathcal{J}^\theta
\cong
\widehat{(\mathcal{T}_\lambda^{\theta})^*}.
\]

As in the proof of that lemma,
the multiplicative functionals from $\widehat{(\mathcal{T}_\lambda^{\theta})^*}$
can be applied to the elements of $\mathcal{T}_\lambda^{\theta}$ via $\pi^\theta$,
from which we conclude:
\begin{lem}\label{Rad subset J}
It holds
\[
\operatorname{Rad}(\mathcal{T}_\lambda^{\theta})
\subset
\mathcal{J}^\theta\cap\mathcal{T}_\lambda^{\theta}.
\]
\end{lem}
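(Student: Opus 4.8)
The plan is to exploit the fact that $\widehat{(\mathcal{T}_\lambda^{\theta})^*}$ is a \emph{commutative} $C^*$-algebra, hence semisimple, and transfer this semisimplicity back to $\mathcal{T}_\lambda^{\theta}$ modulo $\mathcal{J}^\theta\cap\mathcal{T}_\lambda^{\theta}$. Concretely, I would argue as follows. Let $x\in\operatorname{Rad}(\mathcal{T}_\lambda^{\theta})$; I want to show $\pi^\theta(x)=0$, i.e.\ $x\in\mathcal{J}^\theta\cap\mathcal{T}_\lambda^{\theta}$. Consider any multiplicative functional $\chi$ on the commutative $C^*$-algebra $\widehat{(\mathcal{T}_\lambda^{\theta})^*}$. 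Via the chain of inclusions and isomorphisms displayed just before the lemma, the composition $\chi\circ\pi^\theta$ (suitably interpreted through the embedding $\mathcal{T}_\lambda^{\theta}/(\mathcal{J}^\theta\cap\mathcal{T}_\lambda^{\theta})\hookrightarrow\widehat{(\mathcal{T}_\lambda^{\theta})^*}$) defines a nonzero bounded multiplicative linear functional on $\mathcal{T}_\lambda^{\theta}$, i.e.\ an element of $M(\mathcal{T}_\lambda^{\theta})$. Since $x$ lies in the radical, every element of $M(\mathcal{T}_\lambda^{\theta})$ annihilates $x$; in particular $\chi(\pi^\theta(x))=0$.

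Having shown that every character $\chi$ of $\widehat{(\mathcal{T}_\lambda^{\theta})^*}$ kills the image of $\pi^\theta(x)$ under the embedding into $\widehat{(\mathcal{T}_\lambda^{\theta})^*}$, I invoke semisimplicity: a commutative $C^*$-algebra has trivial radical, so the Gelfand transform is injective, and an element annihilated by all characters must be zero. Therefore the image of $\pi^\theta(x)$ in $\widehat{(\mathcal{T}_\lambda^{\theta})^*}$ is zero, and since the embedding $\mathcal{T}_\lambda^{\theta}/(\mathcal{J}^\theta\cap\mathcal{T}_\lambda^{\theta})\hookrightarrow\widehat{(\mathcal{T}_\lambda^{\theta})^*}$ is injective, we get $\pi^\theta(x)=0$, which is precisely $x\in\mathcal{J}^\theta\cap\mathcal{T}_\lambda^{\theta}$. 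This gives the desired inclusion $\operatorname{Rad}(\mathcal{T}_\lambda^{\theta})\subset\mathcal{J}^\theta\cap\mathcal{T}_\lambda^{\theta}$.

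The main technical point — the one the statement attributes to "reasoning as in the proof of Lemma 6.11 from \cite{Bauer2015}" — is verifying that a character of $\widehat{(\mathcal{T}_\lambda^{\theta})^*}$ really does restrict/pull back to a \emph{well-defined and bounded} multiplicative functional on $\mathcal{T}_\lambda^{\theta}$ itself (not merely on the quotient), so that it qualifies as a point of $M(\mathcal{T}_\lambda^{\theta})$ and can therefore be used against an element of the radical. This is where one uses that $\mathcal{T}_\lambda^{\theta}/(\mathcal{J}^\theta\cap\mathcal{T}_\lambda^{\theta})\cong(\mathcal{T}_\lambda^{\theta}+\mathcal{J}^\theta)/\mathcal{J}^\theta$ sits inside $\widehat{(\mathcal{T}_\lambda^{\theta})^*}$ as a (not necessarily closed) subalgebra: restricting a character of the big $C^*$-algebra to this subalgebra and precomposing with $\pi^\theta$ yields the required functional, with boundedness inherited from $\|\pi^\theta(a)\|\le\|a\|$ and the norm-contractivity of characters. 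Once this bookkeeping is in place, the argument is essentially formal.

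\begin{comment}
One subtlety worth flagging: I should make sure the set of characters of $\widehat{(\mathcal{T}_\lambda^{\theta})^*}$ is rich enough to separate points of the subalgebra $(\mathcal{T}_\lambda^{\theta}+\mathcal{J}^\theta)/\mathcal{J}^\theta$ — this is automatic since characters of a commutative $C^*$-algebra separate all its points, a fortiori those of any subalgebra.
\end{comment}
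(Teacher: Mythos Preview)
Your proposal is correct and follows essentially the same approach as the paper: the paper's argument (contained in the paragraph immediately preceding the lemma, together with the reference to Lemma~6.11 of \cite{Bauer2015}) is precisely to pull back characters of the commutative $C^*$-algebra $\widehat{(\mathcal{T}_\lambda^{\theta})^*}$ through $\pi^\theta$ to obtain multiplicative functionals on $\mathcal{T}_\lambda^{\theta}$, and then use semisimplicity of the commutative $C^*$-algebra to conclude. You have spelled out exactly this reasoning, including the bookkeeping about why the pulled-back characters are well-defined and nonzero on $\mathcal{T}_\lambda^{\theta}$.
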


\medskip

Given $j\in\{1,\ldots,m\}$ and $d\in\bZ_+$, let $n_{j,d}=|\operatorname{sp}(\mathbf{T_j}|_{\mathbf{H}^{(j)}_{d}})|$ and
suppose that $\operatorname{sp}(\mathbf{T_j}|_{\mathbf{H}^{(j)}_{d}})=\{\zeta^{j,d}_1,\ldots,\zeta^{j,d}_{n_{j,d}}\}$
(we are considering the eigenvalues without repetition).

We define the polynomials
\[
h^{j,d}_{1}(X)=X-\zeta^{j,d}_1,
\]
\[
h^{j,d}_{2}(X)=(X-\zeta^{j,d}_1)(X-\zeta^{j,d}_2),
\]
\[
\vdots
\]
\[
h^{j,d}_{n_{j,d}}(X)=(X-\zeta^{j,d}_1)\cdots(X-\zeta^{j,d}_{n_{j,d}}).
\]

We can use these polynomials to decide whether the algebra $\cT_\lambda(\mathbf{d})$ is semi-simple.

\begin{prop}
The algebra $\cT_\lambda(\mathbf{d})$ is semi-simple if and only if
every matrix $\mathbf{T_j}|_{\mathbf{H}^{(j)}_d}$, $j\in\{1,\ldots,m\}$, $d\in\bZ_+$,
is diagonalizable in the sense
that its Jordan canonical form is diagonal.
\end{prop}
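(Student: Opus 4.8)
The plan is to characterize semi-simplicity of $\cT_\lambda(\mathbf{d})$ (i.e.\ $\operatorname{Rad}(\cT_\lambda(\mathbf{d}))=\{0\}$) by restricting operators to the finite-dimensional reducing subspaces $H_\ka$, $\ka\in\bZ_+^m$. The starting observation, which I would record first, is that every generator of $\cT_\lambda(\mathbf{d})$ --- whether an operator from $\cT_\lambda(L^\infty_{k-qr})$ or one of the $T_{c_j}$ --- is a direct sum over $\ka$ with respect to the decomposition $\cA^2_\lambda(\bB^n)=\bigoplus_\ka H_\ka$, so it commutes with each orthogonal projection $P_\ka$; passing to norm-limits, every $A\in\cT_\lambda(\mathbf{d})$ has the same property, hence $H_\ka$ reduces $A$ and $A=\bigoplus_\ka A|_{H_\ka}$. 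Moreover, transporting through $U$, the restriction of any element of the dense subalgebra $\widetilde{\cT_\lambda(\mathbf{d})}$ to $H_\ka$ is a scalar polynomial in the commuting operators $I\otimes\cdots\otimes\mathbf{T_j}|_{\mathbf{H}^{(j)}_{\ka_j}}\otimes\cdots\otimes I$, $j=1,\ldots,m$; the unital algebra $\mathcal{A}_\ka$ these generate inside $\mathcal{L}(H_\ka)$ is finite-dimensional and therefore closed, so $A|_{H_\ka}\in\mathcal{A}_\ka$ for every $A\in\cT_\lambda(\mathbf{d})$.

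For the ``if'' direction, assume every $\mathbf{T_j}|_{\mathbf{H}^{(j)}_d}$ is diagonalizable and let $A\in\operatorname{Rad}(\cT_\lambda(\mathbf{d}))$. Since the radical of a commutative Banach algebra consists of quasinilpotent elements, $\operatorname{sp}(A)\subseteq\{0\}$, and because $H_\ka$ reduces $A$ we get $\operatorname{sp}(A|_{H_\ka})\subseteq\operatorname{sp}(A)\subseteq\{0\}$, so each $A|_{H_\ka}$ is nilpotent. On the other hand the generators $I\otimes\cdots\otimes\mathbf{T_j}|_{\mathbf{H}^{(j)}_{\ka_j}}\otimes\cdots\otimes I$ of $\mathcal{A}_\ka$ are diagonalizable and mutually commuting, hence simultaneously diagonalizable, so every element of $\mathcal{A}_\ka$ --- in particular $A|_{H_\ka}$ --- is diagonalizable. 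A nilpotent diagonalizable operator is $0$, so $A|_{H_\ka}=0$ for all $\ka$ and $A=0$; thus $\cT_\lambda(\mathbf{d})$ is semi-simple.

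For the ``only if'' direction I would argue contrapositively. Suppose $\mathbf{T_j}|_{\mathbf{H}^{(j)}_d}$ is not diagonalizable for some $j$ and $d$; then the squarefree polynomial $h^{j,d}_{n_{j,d}}$, whose roots are exactly the distinct eigenvalues of $\mathbf{T_j}|_{\mathbf{H}^{(j)}_d}$, would annihilate this matrix only if it were diagonalizable, so $h^{j,d}_{n_{j,d}}(\mathbf{T_j}|_{\mathbf{H}^{(j)}_d})\neq0$. Set $B:=Q^{(j)}_d\,h^{j,d}_{n_{j,d}}(T_{c_j})\in\cT_\lambda(\mathbf{d})$, using $Q^{(j)}_d\in\cT_\lambda(L^\infty_{k-qr})$. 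On $H_\ka$ the operator $B$ vanishes unless $\ka_j=d$, in which case it is unitarily equivalent to $I\otimes\cdots\otimes h^{j,d}_{n_{j,d}}(\mathbf{T_j}|_{\mathbf{H}^{(j)}_d})\otimes\cdots\otimes I\neq0$, so $B\neq0$. To see $B\in\operatorname{Rad}(\cT_\lambda(\mathbf{d}))$ I would check $\psi(B)=0$ for every $\psi=(\mu,\zeta)\in M(\cT_\lambda(\mathbf{d}))$ as described in Theorem \ref{Maximal ideal space Tdx}: writing $\mu\in M_\theta(\ka_\theta)$, if $\theta_j=0$, or $\theta_j=1$ but the $j$-th entry of $\ka_\theta$ differs from $d$, then $\psi(Q^{(j)}_d)=\mu(Q^{(j)}_d)=0$ and hence $\psi(B)=0$; and if $\theta_j=1$ with that entry equal to $d$, then Lemma \ref{maximal ideal space second inclusion finite} gives $\zeta_j=\psi(T_{c_j})\in\operatorname{sp}(\mathbf{T_j}|_{\mathbf{H}^{(j)}_d})$, so $\zeta_j$ is a root of $h^{j,d}_{n_{j,d}}$ and $\psi(B)=\psi(Q^{(j)}_d)\,h^{j,d}_{n_{j,d}}(\zeta_j)=0$. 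Hence $B$ is a nonzero element of $\operatorname{Rad}(\cT_\lambda(\mathbf{d}))$, so $\cT_\lambda(\mathbf{d})$ is not semi-simple.

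The step I expect to be the main obstacle is the structural lemma underlying the ``if'' direction: justifying that $H_\ka$ reduces an arbitrary $A\in\cT_\lambda(\mathbf{d})$ and that $A|_{H_\ka}$ remains in the finite-dimensional algebra $\mathcal{A}_\ka$. Both facts rest only on norm-continuity together with the closedness of $\mathcal{A}_\ka$ (a finite-dimensional subspace of $\mathcal{L}(H_\ka)$), so they are not deep, but they must be set up carefully, since without them the diagonalizability of the matrices $\mathbf{T_j}|_{\mathbf{H}^{(j)}_d}$ would not transfer to general elements of the algebra.
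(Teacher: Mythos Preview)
Your proof is correct. The ``only if'' direction is essentially identical to the paper's: both exhibit $Q^{(j)}_d\,h^{j,d}_{n_{j,d}}(T_{c_j})$ as a nonzero element of the radical via Theorem~\ref{Maximal ideal space Tdx}.

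For the ``if'' direction the two arguments diverge. The paper argues directly with multiplicative functionals: given the diagonalizability hypothesis, the tensor products of eigenvectors of the $\mathbf{T_j}|_{\mathbf{H}^{(j)}_{\ka_j}}$ form a basis of $U(H_\ka)$, and each such tensor $v$ yields a functional $\psi\in M(\cT_\lambda(\mathbf{d}))$ with $S(U^*v)=\psi(S)U^*v$ for all $S$; hence any $T$ in the radical annihilates every basis vector of $H_\ka$, so $T|_{H_\ka}=0$. Your route instead uses the quasinilpotent characterization of the radical: $A\in\operatorname{Rad}$ forces $A|_{H_\ka}$ to be nilpotent, while membership of $A|_{H_\ka}$ in the (closed, finite-dimensional) algebra $\mathcal{A}_\ka$ generated by commuting diagonalizable operators forces it to be diagonalizable, hence zero. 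Both arguments are valid and short; the paper's ties the result more visibly to the Gelfand theory just developed, whereas yours is a bit more self-contained and avoids explicitly invoking the description of $M(\cT_\lambda(\mathbf{d}))$ in this direction. The structural point you flag as a potential obstacle---that $A|_{H_\ka}\in\mathcal{A}_\ka$ for arbitrary $A\in\cT_\lambda(\mathbf{d})$---is handled exactly as you say, by finite-dimensionality (hence closedness) of $\mathcal{A}_\ka$ together with norm-continuity of the restriction map.
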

\begin{proof}
Suppose every matrix $\mathbf{T_j}|_{\mathbf{H}^{(j)}_d}$ is as stated.
If there is $0\neq T\in \operatorname{Rad}(\mathcal{T}_\lambda(\mathbf{d}))$,
then there is $\ka\in\bZ_+^m$ such that
$T|_{H_\ka}\neq 0$.

Our hypothesis implies that for every $j\in\{1,\ldots,m\}$ there is a linear (not necessarily orthonormal) basis for $\mathbf{H}^{(j)}_{\ka_j}$ formed by normalized eigenvectors of $\mathbf{T_j}$.
Thus the corresponding tensor products form a linear basis for $U(H_\ka)$.
One can easily see that for every such tensor product
$v$ and an operator $S\in\mathcal{T}_\lambda(\mathbf{d})$ we have
$S(U^*(v))=\psi(S)U^*(v)$,
where $\psi$ is the multiplicative functional defined by
\[
S\longmapsto\pdi{S( U^*(v)),U^*(v)},
\]
corresponding to some point
$(\ka,\zeta)\in M(\cT_\lambda(\mathbf{d}))$,
where $\zeta\in \operatorname{sp}_{pt}
(\mathbf{T_1}|_{\mathbf{H}^{(1)}_{\ka_1}})\times\cdots\times \operatorname{sp}_{pt}(\mathbf{T_m}|_{\mathbf{H}^{(m)}_{\ka_m}})$.
Since $T|_{H_\ka}\neq 0$, there is such a tensor $v$ with $T(U^*(v))\neq0$, but then we have the contradiction $T( U^*(v))=\psi(T)U^*(v)=0$.

Conversely, if there are some $j\in\{1,\ldots,m\}$ and $d\in\bZ_+$ such that
$\mathbf{T_j}|_{\mathbf{H}^{(j)}_d}$ fails to be diagonalizable,
then by Theorem \ref{Maximal ideal space Tdx}
\[
\operatorname{Rad}(\mathcal{T}_\lambda(\mathbf{d}))\ni Q^{(j)}_dh^{j,d}_{n_{j,d}}(\mathbf{T_j}|_{\mathbf{H}^{(j)}_d})\neq 0.
\]
\end{proof}

As in previous works, one step towards the description of the radical is the description of the subalgebra $\widetilde{\cT_\lambda(\mathbf{d})}\cap \operatorname{Rad}(\mathcal{T}_\lambda(\mathbf{d}))$.

Taking an element from this algebra and supposing that it is mapped to zero by all multiplicative functionals from Theorem \ref{Maximal ideal space Tdx},
one sees that it lies in the non-closed ideal generated by operators of the form
\begin{equation}
\label{typical element radical Bergman}
D_\gamma
\bigoplus_{d\in F_L}
Q^{(j)}_d
h^{j,d}_{n_{j,d}}(T_{c_j}),
\end{equation}
where 
$D_\gamma\in \cT_\lambda(L^\infty_{k-qr})$ is such that $\gamma(\mu)=0$ for any $\mu\in M_\theta$
with $\theta_j=0$ and $F_L=\{d\in\bZ_+\colon
|\operatorname{sp}
(\mathbf{T_j}
|_{\mathbf{H}^{(j)}_d})
|
\leq L\}$,
$L\in\bZ_+$.

Since $F_L$ could be infinite,
it is not clear in general whether such an operator always belong to $\cT_\lambda(\mathbf{d})$,
however it does for all known cases.
We present some of these:
\begin{prop}
\label{radical dense intersection Bergman}
Assume that one of the following conditions holds:
\begin{enumerate}
    \item $m=1$,
    \item All matrices $\mathbf{T_j}|_{\mathbf{H}^{(j)}_{d}}$ are nilpotent,
    \item For each $j\in\{1,\ldots,m\}$, the number of distinct eigenvalues $n_{j,d}$ of the matrix $\mathbf{T_j}|_{\mathbf{H}^{(j)}_{d}}$ tends to infinity as $d\to\infty$. 
\end{enumerate}
Then
$\widetilde{\cT_\lambda(\mathbf{d})}\cap \operatorname{Rad}(\mathcal{T}_\lambda(\mathbf{d}))$
is the non-closed algebra generated by all operators of the form
\eqref{typical element radical Bergman}.
\end{prop}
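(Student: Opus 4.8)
The plan is to prove both inclusions between $\widetilde{\cT_\lambda(\mathbf{d})}\cap\operatorname{Rad}(\cT_\lambda(\mathbf{d}))$ and the non-closed algebra $\mathcal{R}$ generated by the operators of the form \eqref{typical element radical Bergman}. The inclusion $\mathcal{R}\subset\widetilde{\cT_\lambda(\mathbf{d})}\cap\operatorname{Rad}(\cT_\lambda(\mathbf{d}))$ should follow from the reasoning already sketched in the commented-out block after the definition of the $h^{j,d}_\ell$: each generator $D_\gamma\bigoplus_{d\in F_L}Q^{(j)}_d h^{j,d}_{n_{j,d}}(T_{c_j})$ is annihilated by every multiplicative functional described in Theorem \ref{Maximal ideal space Tdx} (splitting into the cases $\theta_j=0$, where $\gamma(\mu)=0$, and $\theta_j=1$ with $\ka_j=d$, where $h^{j,d}_{n_{j,d}}(\zeta_j)=0$ by Lemma \ref{maximal ideal space second inclusion finite}), so it lies in the radical; the only thing to check is that it lies in $\cT_\lambda(\mathbf{d})$ at all, and here the three hypotheses (1)--(3) each guarantee membership: for $m=1$ the operator $\bigoplus_{d\in F_L}Q^{(1)}_d h^{1,d}_{n_{1,d}}(T_{c_1})$ is a norm-convergent series of spectral pieces, in case (2) the $h^{j,d}$ are simply powers $X^{n_{j,d}}$ and one controls norms via nilpotency of the finite blocks, and in case (3) the condition $F_L$ being the set where $n_{j,d}\le L$ forces $F_L$ to be \emph{finite}, so the sum is genuinely finite and trivially in the algebra.

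The substantive direction is $\widetilde{\cT_\lambda(\mathbf{d})}\cap\operatorname{Rad}(\cT_\lambda(\mathbf{d}))\subset\mathcal{R}$. I would take an arbitrary $A=\sum_{\rho\in F_A}D_{\gamma_\rho}T^\rho\in\widetilde{\cT_\lambda(\mathbf{d})}$ that is mapped to zero by every functional in $M(\cT_\lambda(\mathbf{d}))$, and analyze the vanishing conditions stratum by stratum according to the decomposition \eqref{decomposition Mkqr}. On the stratum $M_{\mathbf 1}\cong\bZ_+^m$ the functional at $(\ka,\zeta)$ sends $A$ to $\sum_\rho\gamma_\rho(\ka)\zeta^\rho$, which must vanish for all $\ka\in\bZ_+^m$ and all $\zeta\in\operatorname{sp}_{pt}(\mathbf{T_1}|_{\mathbf{H}^{(1)}_{\ka_1}})\times\cdots\times\operatorname{sp}_{pt}(\mathbf{T_m}|_{\mathbf{H}^{(m)}_{\ka_m}})$; fixing $\ka$, this says the polynomial $\sum_\rho\gamma_\rho(\ka)X^\rho$ vanishes on a full product of eigenvalue sets, so by the structure of the vanishing ideal of such a product (an iterated one-variable argument, using the $h^{j,\ka_j}$) one writes $A|_{H_\ka}$ as an $\cT_\lambda(L^\infty_{k-qr})$-combination of the operators $Q^{(j)}_{\ka_j}h^{j,\ka_j}_{n_{j,\ka_j}}(T_{c_j})$. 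Then one must reassemble these local factorizations into a single element of $\mathcal{R}$ — and this is precisely where the projections $P_\ell$ from Lemma \ref{projections QS} enter, to turn a sum $\sum_j \widetilde Q^{(j)}_{\ka_j}S_j$ of overlapping pieces into a mutually orthogonal decomposition (as the deleted remark explains). One also has to verify that the coefficient operators $D_\gamma$ produced satisfy $\gamma(\mu)=0$ for $\mu\in M_\theta$ with $\theta_j=0$, which comes from examining the remaining strata $M_\theta(\ka_\theta)$, $\theta\ne\mathbf 1$: Lemma \ref{maximal ideal space second inclusion infty} pins $\zeta_j$ to $\widehat{\operatorname{ess-sp}}(\mathbf{T_j})$ for $j\in\Jtc$, and the vanishing of $A$ on the corresponding product sets forces the stated condition on the $\gamma$'s.

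The main obstacle I anticipate is the bookkeeping in the reassembly step together with the convergence issue flagged in the text: over each fixed $\ka$ the factorization is finite and elementary, but as $\ka$ ranges over $\bZ_+^m$ one is implicitly summing infinitely many local contributions of the form $D_\gamma\bigoplus_d Q^{(j)}_d h^{j,d}_{n_{j,d}}(T_{c_j})$, and showing the assembled object genuinely lies in $\cT_\lambda(\mathbf{d})$ (not merely in $\mathcal{L}(\cA^2_\lambda(\bB^n))$) is exactly what fails in general and is rescued only by hypotheses (1)--(3). So the proof should isolate, as a lemma, the statement ``under (1), (2), or (3), each operator \eqref{typical element radical Bergman} belongs to $\cT_\lambda(\mathbf{d})$,'' prove that first by the three case-specific norm estimates indicated above, and then run the functional-vanishing analysis cleanly on top of it. A secondary technical point is keeping track of which $D_\gamma$ appears with which generator so that the membership in $\mathcal{R}$ (the ideal generated by \eqref{typical element radical Bergman}, with the side condition on $\gamma$) is literally satisfied rather than just up to radical; Lemma \ref{Rad subset J} and the commutative-quotient $C^*$-picture it rests on are the tools that make this identification rigid.
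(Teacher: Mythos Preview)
Your high-level plan (prove both inclusions, use the Gelfand map from Theorem \ref{Maximal ideal space Tdx} to force a polynomial factorisation on each block $H_\ka$) is correct and is what the paper does. However, you are importing the wrong machinery. Neither Lemma \ref{projections QS} (the projections $P_\ell$) nor Lemma \ref{Rad subset J} plays any role in the paper's proof of this proposition; both are tools reserved for the later density theorem (Theorem \ref{radical density}). The deleted remark you refer to explains the purpose of the $P_\ell$ in \emph{that} proof, not here.

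The paper's argument is considerably more elementary and treats only case (1), asserting the others are similar. With $m=1$ write $A=\sum_{\ell=0}^L D_{\gamma_\ell}T^\ell$. For each fixed $d\in\bZ_+$ the scalar polynomial $\sum_\ell\gamma_\ell(d)X^\ell$ vanishes on $\operatorname{sp}(T|_{H_d})$, hence equals $p_d(X)h^{1,d}_{n_{1,d}}(X)$ for some polynomial $p_d$. The crucial observation you are missing is a simple \emph{degree bound}: since the left-hand side has degree $\le L$, we have $p_d=0$ whenever $n_{1,d}>L$. This immediately gives
\[
A=\bigoplus_{d\in\bZ_+}Q^{(1)}_dA=p(T)\bigoplus_{d\in F_L}Q^{(1)}_d h^{1,d}_{n_{1,d}}(T),
\]
with $p(T)=\bigoplus_{d\in F_L}p_d(T)$ a finite-degree object; there is no ``reassembly obstacle'' and no need for orthogonalising projections. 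The only remaining step is to check that the diagonal coefficients appearing in $p(T)$ genuinely lie in $\cT_\lambda(L^\infty_{k-qr})$, and this is forced by applying the functionals $(\mu,\zeta)$ with $\mu\in M_{(0)}$ and $\zeta\in\widehat{\operatorname{ess-sp}}(T)$ (they turn out to be compact diagonal operators). Cases (2) and (3) are handled by the same degree-bound mechanism: in (2) the $h^{j,d}_{n_{j,d}}$ are just $X^{n_{j,d}}$, in (3) the condition $n_{j,d}\to\infty$ makes each $F_L$ finite, so again only finitely many blocks survive.

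So your proof would go through, but it is substantially heavier than needed; the paper's route avoids the $P_\ell$ and the $C^*$-quotient entirely, at the cost of explicitly working out only the one-generator case.
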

\begin{proof}
We prove the first case.
The two other cases follow from similar calculations.

If $m=1$, then we have only one generator $T:=\mathbf{T_1}=T_{c_1}$ and $\mathbf{H}^{(1)}_d=H^{(1)}_d=H_d$. Let $A\in\widetilde{\cT_\lambda(\mathbf{d})}\cap \operatorname{Rad}(\mathcal{T}_\lambda(\mathbf{d}))$ and suppose that
$A=\sum_{\ell=0}^L
D_{\gamma_\ell}T^\ell$.
For $d\in\bZ_+$ we note that
$A|_{H_d}$ is of the form
$A=\sum_{\ell=0}^L \gamma_\ell(d) T^\ell$.

Let $(d,\zeta)\in M(\cT_\lambda(\mathbf{d}))$ with $d\in\bZ_+$. By Theorem \ref{Maximal ideal space Tdx} we have $\zeta\in\operatorname{sp}(\mathbf{T_1}|_{\mathbf{H}_d})$ and thus $0=(d,\zeta)(A)=\sum_{\ell=0}^L\gamma_\ell(d)\zeta^\ell$ imply that
$\sum_{\ell=0}^L
\gamma_\ell(d) X^\ell=
p_{d}(X)h^{1,d}_{n_{1,d}}(X)$,
for some polynomial $p_d$.
Note that $p_d(X)=0$ whenever $n_{1,d}>L$
and so we can write
\[
A=\bigoplus_{d\in\bZ_+}Q^{(1)}_dA
=\bigoplus_{d\in\bZ_+}Q^{(1)}_d
p_{d}(T)h^{1,d}_{n_{1,d}}(T)
=
p(T)\bigoplus_{d\in F_L}Q^{(1)}_d
h^{1,d}_{n_{1,d}}(T)|_{H_d},
\]
where
$F_L=\{d\in\bZ_+\colon
|\operatorname{sp}
(T
|_{{H}_d})
|
\leq L\}$
and $p(T)=\bigoplus_{d\in F_L}p_d(T)$ can be written as a is a finite sum of finite products of $T$ and diagonal operators not necessarily belonging to
$\cT_\lambda(L^\infty_{k-qr})$
but still constant on all subspaces $H_d$.

Finally, applying the condition
$(\mu,\zeta)(A)=0$ for all functionals with $\mu\in M_{(0)}=M(\cT_\lambda(L^\infty_{k-qr}))\backslash\bZ_+$ and $\zeta\in\widehat{\operatorname{ess-sp}}(T)$
(according to Theorem \ref{Maximal ideal space Tdx}),
one easily sees that indeed
$p(T)\in\widetilde{\cT_\lambda(\mathbf{d})}$,
being a sum of operators of the form $D_\gamma T^\ell$,
where each $D_\gamma$ is compact.
\end{proof}

We show now that, as in all previously known cases, $\widetilde{\cT_\lambda(\mathbf{d})}\cap \operatorname{Rad}(\mathcal{T}_\lambda(\mathbf{d}))$ is dense in $\operatorname{Rad}(\mathcal{T}_\lambda(\mathbf{d}))$.
We remark that this holds in general and not only for the cases where
$\widetilde{\cT_\lambda(\mathbf{d})}\cap\operatorname{Rad}(\mathcal{T}_\lambda(\mathbf{d}))$ is explicitly known.



\begin{lem}\label{polynomial decomposition}
Let $A\in\widetilde{\mathcal{T}}_\lambda^{\theta}$.
Then there are operators 
$S^{j,d}_{n_{j,d}}(A)\in\widetilde{\mathcal{T}}_\lambda^{\theta}$,
and $S^{j,d}_{\ell}(A)\in\widetilde{\mathcal{T}}_\lambda^{\theta-\epsilon_j}$
(that is, $T_{c_j}$ does not appear in this polynomial expression),
$\ell=0,1,\ldots,n_{j,d}-1$,
such that $S^{j,d}_{n_{j,d}}(A)h^{j,d}_{n_{j,d}}(T_{c_j})\in\widetilde{\mathcal{T}}_\lambda^{\theta}\cap \operatorname{Rad}(\mathcal{T}_\lambda^{\theta})$
and
\begin{align}
\label{polynomial decomposition formula}
Q^{(j)}_dA&=
S^{j,d}_{n_{j,d}}(A)h^{j,d}_{n_{j,d}}(T_{c_j})\nonumber\\
&+S^{j,d}_{n_{j,d}-1}(A)h^{j,d}_{n_{j,d}-1}(T_{c_j})\nonumber\\
&\cdots\\
&+S^{j,d}_{1}(A)h^{j,d}_{1}(T_{c_j})\nonumber\\
&+S^{j,d}_{0}(A).\nonumber
\end{align}
\end{lem}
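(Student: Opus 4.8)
The plan is to reduce the statement to a repeated application of polynomial division (the division algorithm for one-variable polynomials), together with the observation from Section \ref{sec: The algebra Tc} that $Q_d^{(j)}$ acts as a projector onto a subspace on which $T_{c_j}$ behaves like the finite matrix $\mathbf{T_j}|_{\mathbf{H}_d^{(j)}}$. First I would write $A\in\widetilde{\mathcal{T}}_\lambda^{\theta}$ as a finite sum $\sum_{\rho}D_{\gamma_\rho}T^{\rho}$ and, collecting powers of $T_{c_j}$, rewrite it as a polynomial in $T_{c_j}$ with coefficients in $\widetilde{\mathcal{T}}_\lambda^{\theta-\epsilon_j}$, say $A=\sum_{s=0}^{N}B_s T_{c_j}^{s}$ with $B_s\in\widetilde{\mathcal{T}}_\lambda^{\theta-\epsilon_j}$ (this uses that the generators pairwise commute, so all the $B_s$ genuinely omit $T_{c_j}$). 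Applying $Q_d^{(j)}$ and using that $Q_d^{(j)}$ commutes with everything in $\cT_\lambda(\mathbf{d})$, I get $Q_d^{(j)}A=\sum_{s}B_s Q_d^{(j)}T_{c_j}^{s}$.

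Next I would perform the division algorithm of the formal polynomial $\sum_s (\text{coeff})X^{s}$ by the factors $h^{j,d}_{1},\dots,h^{j,d}_{n_{j,d}}$, organized in the "Newton-form" fashion suggested by \eqref{polynomial decomposition formula}: first divide by $h^{j,d}_{n_{j,d}}$ to get a quotient and a remainder of degree $<n_{j,d}$; then expand that remainder in the basis $1,h^{j,d}_{1},h^{j,d}_{2},\dots,h^{j,d}_{n_{j,d}-1}$, which is a triangular basis of the space of polynomials of degree $<n_{j,d}$ so the expansion coefficients are uniquely determined scalars. Transferring this back to operators produces the operators $S^{j,d}_{\ell}(A)$: the top one $S^{j,d}_{n_{j,d}}(A)$ is the operator obtained by substituting $T_{c_j}$ into the quotient polynomial and is therefore in $\widetilde{\mathcal{T}}_\lambda^{\theta}$, while each lower $S^{j,d}_{\ell}(A)$, $\ell<n_{j,d}$, comes from a coefficient lying in $\widetilde{\mathcal{T}}_\lambda^{\theta-\epsilon_j}$ (the scalars from the triangular expansion only multiply the already-present coefficient operators $B_s$), so $T_{c_j}$ does not appear in it. Formula \eqref{polynomial decomposition formula} is then just the reassembled identity.

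It remains to check that $S^{j,d}_{n_{j,d}}(A)h^{j,d}_{n_{j,d}}(T_{c_j})\in\widetilde{\mathcal{T}}_\lambda^{\theta}\cap\operatorname{Rad}(\mathcal{T}_\lambda^{\theta})$. Membership in $\widetilde{\mathcal{T}}_\lambda^{\theta}$ is clear from the construction. For the radical part I would evaluate an arbitrary multiplicative functional $\psi$ of $\mathcal{T}_\lambda^{\theta}$ on $Q_d^{(j)}\cdot S^{j,d}_{n_{j,d}}(A)h^{j,d}_{n_{j,d}}(T_{c_j})$ (note the extra $Q_d^{(j)}$ appears because $h^{j,d}_{n_{j,d}}(\mathbf{T_j}|_{\mathbf{H}_d^{(j)}})$ kills $\mathbf{H}_d^{(j)}$, i.e. $Q^{(j)}_d h^{j,d}_{n_{j,d}}(T_{c_j})=0$ as an operator): if $\psi(Q_d^{(j)})=1$ then, by the analogue of Lemma \ref{maximal ideal space second inclusion finite} for the algebra $\mathcal{T}_\lambda^{\theta}$, $\psi(T_{c_j})$ is an eigenvalue of $\mathbf{T_j}|_{\mathbf{H}_d^{(j)}}$, hence $h^{j,d}_{n_{j,d}}(\psi(T_{c_j}))=0$; if $\psi(Q_d^{(j)})=0$ the whole product is annihilated as well. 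Since $Q^{(j)}_d h^{j,d}_{n_{j,d}}(T_{c_j})=0$, the operator $S^{j,d}_{n_{j,d}}(A)h^{j,d}_{n_{j,d}}(T_{c_j})$ equals $\big(I-Q^{(j)}_d\big)S^{j,d}_{n_{j,d}}(A)h^{j,d}_{n_{j,d}}(T_{c_j})$ plus something manifestly zero, so in fact it is annihilated by \emph{every} multiplicative functional of $\mathcal{T}_\lambda^{\theta}$, which is precisely membership in the radical. The main obstacle I anticipate is bookkeeping: making sure the coefficient operators genuinely stay in the smaller algebras $\widetilde{\mathcal{T}}_\lambda^{\theta-\epsilon_j}$ throughout the triangular re-expansion, and correctly invoking the version of the finite-part lemma (Lemma \ref{maximal ideal space second inclusion finite}) for $\mathcal{T}_\lambda^{\theta}$ rather than for the full algebra — both are routine but need care because the whole point of the lemma is to keep the $T_{c_j}$-degree under control for the later radical argument.
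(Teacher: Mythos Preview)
Your overall strategy --- write $A$ as a polynomial in $T_{c_j}$ with coefficients in $\widetilde{\mathcal{T}}_\lambda^{\theta-\epsilon_j}$, divide by $h^{j,d}_{n_{j,d}}$, and expand the remainder in the triangular Newton basis $1,h^{j,d}_1,\ldots,h^{j,d}_{n_{j,d}-1}$ --- is exactly what the paper does. The difference is only packaging: the paper works with the polynomial $h((Y_\rho),X_1,\ldots,X_m)$ in all variables at once and applies repeated division in $X_j$, but the content is the same.

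There is, however, a concrete error in your radical argument. You assert that $Q^{(j)}_d h^{j,d}_{n_{j,d}}(T_{c_j})=0$ ``because $h^{j,d}_{n_{j,d}}(\mathbf{T_j}|_{\mathbf{H}_d^{(j)}})$ kills $\mathbf{H}_d^{(j)}$''. That is false in general: $h^{j,d}_{n_{j,d}}$ is the product over the \emph{distinct} eigenvalues $\zeta^{j,d}_1,\ldots,\zeta^{j,d}_{n_{j,d}}$, not the characteristic polynomial. If $\mathbf{T_j}|_{\mathbf{H}_d^{(j)}}$ has a nontrivial Jordan block then $h^{j,d}_{n_{j,d}}(\mathbf{T_j}|_{\mathbf{H}_d^{(j)}})\neq 0$; indeed, Proposition~\ref{radical dense intersection Bergman} and the semisimplicity criterion right before it show that operators of the form $Q^{(j)}_d h^{j,d}_{n_{j,d}}(T_{c_j})$ are precisely the typical nonzero elements of the radical. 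Consequently the splitting $S^{j,d}_{n_{j,d}}(A)h^{j,d}_{n_{j,d}}(T_{c_j})=(I-Q^{(j)}_d)(\cdots)+0$ fails, and nothing forces $(I-Q^{(j)}_d)S^{j,d}_{n_{j,d}}(A)h^{j,d}_{n_{j,d}}(T_{c_j})$ to be killed by an arbitrary $\psi$ (take $\psi$ with $\psi(Q^{(j)}_{d'})=1$ for some $d'\neq d$: then $\psi(T_{c_j})\in\operatorname{sp}(\mathbf{T_j}|_{\mathbf{H}_{d'}^{(j)}})$ need not be a root of $h^{j,d}_{n_{j,d}}$).

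The fix is painless and is what the paper does: absorb $Q^{(j)}_d$ into the definition of \emph{every} $S^{j,d}_\ell(A)$, i.e.\ set $S^{j,d}_\ell(A)=Q^{(j)}_d\cdot(\text{your coefficient operator})$. This is compatible with \eqref{polynomial decomposition formula} since the left-hand side already carries $Q^{(j)}_d$, and it keeps $S^{j,d}_\ell(A)\in\widetilde{\mathcal{T}}_\lambda^{\theta-\epsilon_j}$ for $\ell<n_{j,d}$ because $Q^{(j)}_d\in\cT_\lambda(L^\infty_{k-qr})$. With that convention your two-case analysis is already the complete argument: if $\psi(Q^{(j)}_d)=0$ the product vanishes, and if $\psi(Q^{(j)}_d)=1$ then (by the analogue of Lemma~\ref{maximal ideal space second inclusion finite}, or directly Theorem~\ref{Maximal ideal space Tdx}) $\psi(T_{c_j})\in\operatorname{sp}(\mathbf{T_j}|_{\mathbf{H}_d^{(j)}})$, whence $h^{j,d}_{n_{j,d}}(\psi(T_{c_j}))=0$. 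No appeal to $Q^{(j)}_d h^{j,d}_{n_{j,d}}(T_{c_j})=0$ is needed, and none is available.
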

\begin{proof}
Suppose that $A=\sum_{\rho\in F}D_{\gamma_\rho}T^\rho$, with $F$ a finite subset of $\bZ_+^m$.
Then $A=h((D_{\gamma_\rho})_{\rho\in F},T_{c_1},\cdots,T_{c_m})$,
where $h$ is the polynomial in
$|F|+m$ variables
\[
h((Y_\rho)_{\rho\in F},X_1,\ldots,X_m)=
\sum_{\rho\in F}Y_{\rho}X^\rho,
\]
where $X^\rho=X_1^{\rho_1}\cdots X_m^{\rho_m}$.
(We note that, indeed, $h$ does not depend on the variables $X_j$ for
$j\notin\Jt$).
By applying several times the division algorithm for polynomials in the variable $X_j$, we can
construct unique functions $g^{j,d}_{0},g^{j,d}_{1},\ldots,g^{j,d}_{n_{j,d}}$
such that
\begin{align*}
h((Y_\rho)_{\rho\in F},X_1,\ldots,X_m)&=
g^{j,d}_{n_{j,d}}((Y_\rho)_{\rho\in F},X_1,\ldots,X_m)
h^{j,d}_{n_{j,d}}(X_j)\\
&+
g^{j,d}_{n_{j,d}-1}((Y_\rho)_{\rho\in F},X_1,\ldots,\widehat{X_j},\ldots,X_m)
h^{j,d}_{n_{j,d}-1}(X_j)\\
&+
\cdots\\
&+
g^{j,d}_{1}((Y_\rho)_{\rho\in F},X_1,\ldots,\widehat{X_j},\ldots,X_m)
h^{j,d}_{1}(X_j)\\
&+g^{j,d}_{0}((Y_\rho)_{\rho\in F},X_1,\ldots,\widehat{X_j},\ldots,X_m).
\end{align*}
(Here $\widehat{X_j}$ indicates that $X_j$ does not appear on the corresponding expression).
By successively evaluating this expression in the points of $\operatorname{sp}(\mathbf{T_j}|_{\mathbf{H}^{(j)}_{d}})=\{\zeta^{j,d}_1,\ldots,\zeta^{j,d}_{n_{j,d}}\}$ we find that the functions
$g^{j,d}_{\ell}$ are polynomials for $0\leq\ell\leq n_{j,d}-1$.
In particular, $g^{j,d}_{n_{j,d}}((Y_\rho)_{\rho\in F},X_1,\ldots,X_m)
h^{j,d}_{n_{j,d}}(X_j)$ is also a polynomial. Furthermore,
one easily sees that $g^{j,d}_{n_{j,d}}((Y_\rho)_{\rho\in F},X_1,\ldots,X_m)$
is bounded on each compact set of $\bC^{|F|+m}$ and, being a rational function,
it is then also a polynomial.

We obtain the formula from the lemma by evaluating
this polynomials in the corresponding operators
letting
$S^{j,d}_{\ell}(A)=Q^{(j)}_dg^{j,d}_\ell((D_{\gamma_\rho})_{\rho\in F},T_{c_1},\cdots,T_{c_m})$,
for $\ell=0.\ldots,n_{j,d}$.

Finally, by Theorem \ref{Maximal ideal space Tdx},
one can see that
$S^{j,d}_{n_{j,d}}(A)h^{j,d}_{n_{j,d}}(T_{c_j})\in\widetilde{\mathcal{T}}_\lambda^{\theta}\cap \operatorname{Rad}(\mathcal{T}_\lambda^{\theta})$.
\end{proof}

\begin{lem}
\label{norm tensor product}
Consider a tensor product of Hilbert spaces $H_1\otimes \cdots\otimes H_M$
and let $j\in\{1,\ldots,M\}$.
Let $w$ and $w'$ be fixed vectors in $H_j$
with $\|w\|=\|w'\|$, let
$s$ be a sum of tensors of the form
\[
v^1\otimes\cdots \otimes v^{j-1}\otimes w\otimes v^{j+1}\otimes\cdots\otimes v^M
\]
and let
$s'$ be equal $s$ but changing in every summand $w$ by $w'$.
Then $\|s\|=\|s'\|$.
\end{lem}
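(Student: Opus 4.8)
The plan is to realize $s'$ as the image of $s$ under a unitary operator on $H_1\otimes\cdots\otimes H_M$, so that the conclusion follows from unitary invariance of the Hilbert space norm. The point that makes this work is the standing hypothesis that the \emph{same} vector $w$ sits in the $j$-th tensor leg of every summand of $s$ (and likewise $w'$ for $s'$), so that a single operator acting only on the $j$-th factor suffices.

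First I would dispose of the trivial case: if $w=0$, then $\|w'\|=\|w\|=0$, so $w'=0$ and $s=s'=0$. Assume now $w\neq 0$. Since $\|w\|=\|w'\|$, the normalized vectors $w/\|w\|$ and $w'/\|w'\|$ are both unit vectors of $H_j$, and hence there is a unitary $W\colon H_j\to H_j$ with $W(w/\|w\|)=w'/\|w'\|$; consequently $Ww=w'$. (Concretely, one identifies the one-dimensional subspace $\mathbb{C}w$ with $\mathbb{C}w'$ by the isometry $w/\|w\|\mapsto w'/\|w'\|$ and takes any unitary of $H_j$ restricting to it, e.g. the identity on a suitable complement.) Then set $V=\mathrm{id}_{H_1}\otimes\cdots\otimes\mathrm{id}_{H_{j-1}}\otimes W\otimes\mathrm{id}_{H_{j+1}}\otimes\cdots\otimes\mathrm{id}_{H_M}$, which is a unitary operator on $H_1\otimes\cdots\otimes H_M$ as a tensor product of unitaries.

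Writing $s$ as the finite sum $\sum_\iota v_\iota^1\otimes\cdots\otimes v_\iota^{j-1}\otimes w\otimes v_\iota^{j+1}\otimes\cdots\otimes v_\iota^M$, linearity of $V$ and the definition of a tensor product of operators give $Vs=\sum_\iota v_\iota^1\otimes\cdots\otimes v_\iota^{j-1}\otimes Ww\otimes v_\iota^{j+1}\otimes\cdots\otimes v_\iota^M=s'$, since $Ww=w'$ and $w$ (resp. $w'$) occupies position $j$ in every summand. As $V$ is unitary, $\|s'\|=\|Vs\|=\|s\|$. There is essentially no obstacle in this argument; an equivalent route, if one prefers, is to use the canonical unitary $H_1\otimes\cdots\otimes H_M\cong\bigl(\bigotimes_{i\neq j}H_i\bigr)\otimes H_j$, under which $s$ and $s'$ correspond to $t\otimes w$ and $t\otimes w'$ for one and the same $t\in\bigotimes_{i\neq j}H_i$, whence $\|s\|=\|t\|\,\|w\|=\|t\|\,\|w'\|=\|s'\|$.
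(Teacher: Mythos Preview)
Your argument is correct, but it differs from the paper's. The paper simply expands the norm squared via the inner product on the tensor product:
\[
\|s\|^2=\sum_{i,i'\in I}\pdi{v^1_i,v^1_{i'}}\cdots\pdi{w,w}\cdots\pdi{v^M_i,v^M_{i'}},
\]
and observes that the only factor involving the $j$-th leg is $\pdi{w,w}=\|w\|^2=\|w'\|^2=\pdi{w',w'}$, so the expression is unchanged when $w$ is replaced by $w'$. Your route instead builds a unitary $V=I\otimes\cdots\otimes W\otimes\cdots\otimes I$ with $Ww=w'$ and invokes unitary invariance of the norm; your second variant, rewriting $s$ as $t\otimes w$ under the canonical shuffle, is essentially the same idea in its cleanest form. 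The paper's computation is marginally more elementary in that it avoids constructing or extending any operator, while your argument is more conceptual and explains \emph{why} the equality holds (the two configurations are unitarily equivalent), and it would generalize immediately to, say, replacing $w$ by $Ww$ for any isometry $W$.
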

\begin{proof}
Suppose that $s=\sum_{i\in I} v^1_i\otimes\cdots \otimes v^{j-1}_i\otimes w\otimes v^{j+1}_i\otimes\cdots\otimes v^M_i$
and
$s'=\sum_{i\in I}v^1_i\otimes\cdots \otimes v^{j-1}_i\otimes w'\otimes v^{j+1}_i\otimes\cdots\otimes v^M_i$. Then
\begin{align*}
    \|s\|^2
    &=
    \sum_{i\in I,i'\in I}
    \pdi{v^1_i,v^1_{i'}}
    \cdots
    \pdi{w,w}
    \cdots
    \pdi{v^M_i,v^M_{i'}}
    \\
    &=
    \sum_{i\in I,i'\in I}
    \pdi{v^1_i,v^1_{i'}}
    \cdots
    \pdi{w',w'}
    \cdots
    \pdi{v^M_i,v^M_{i'}}=\|s'\|^2.\qedhere
\end{align*}
\end{proof}

We recall that the image of the projection $Q^{(j)}_d$ is
the closed span of all basic elements $e_\alpha$ such that $|\alpha_{(j)}|=d$
and that, by construction, we have $e_\alpha=U^*(e^{(1)}_{\alpha_{(1)}}\otimes\cdots\otimes e^{(m)}_{\alpha_{(m)}})$,
for every $\alpha\in\bZ_+^n$.

For a function $g\in\mathbf{H}^{(j)}_d$ with $\|g\|=1$ we define
the space $H^{(j)}(g)$ as the closed span of all functions of the form
\[
U^*(e^{(1)}_{\alpha_{(1)}}\otimes\cdots\otimes e^{(j-1)}_{\alpha_{(j-1)}}
\otimes g \otimes e^{(j+1)}_{\alpha_{(j+1)}}
\otimes\cdots\otimes e^{(m)}_{\alpha_{(m)}}).
\]
Thus, for example,
$H^{(j)}_d=\bigoplus_{|\alpha_{(j)}|=d}H^{(j)}(e^{(j)}_{\alpha_{(j)}})$.

For a fixed tuple $\alpha_{(j)}\in\bZ_+^{k_j}$ we define
the operator
\[
R^{(j)}_{\alpha_{(j)},g}\colon
H^{(j)}(e^{(j)}_{\alpha_{(j)}})
\longrightarrow
H^{(j)}(g)
\]
given by the extension of the assignment
\[
R^{(j)}_{\alpha_{(j)},g}(e_\beta)=
U^*(e^{(1)}_{\beta_{(1)}}\otimes\cdots\otimes e^{(j-1)}_{\beta_{(j-1)}}
\otimes g \otimes e^{(j+1)}_{\beta_{(j+1)}}
\otimes\cdots\otimes e^{(m)}_{\beta_{(m)}}),
\]
for every $\beta\in\bZ_+^n$ such that $\beta_{(j)}=\alpha_{(j)}$.
We note that, by Lemma \ref{norm tensor product}, we have $\|R^{(j)}_{\alpha_{(j)},g}(f)\|=\|f\|$
for every $f\in H^{(j)}(e^{(j)}_{\alpha_{(j)}})$.

Moreover, every $f\in H^{(j)}_d$ can be written in the form of an orthogonal sum
$f=\sum_{\substack{|\alpha_{(j)}|=d}}f_{\alpha_{(j)}}$,
where $f_{\alpha_{(j)}}\in H^{(j)}(e^{(j)}_{\alpha_{(j)}})$.
Hence we have
\[
\|f\|^2=\sum_{\substack{|\alpha_{(j)}|=d}}\|f_{\alpha_{(j)}}\|^2=
\sum_{\substack{|\alpha_{(j)}|=d}}\|R^{(j)}_{\alpha_{(j)},g}(f_{\alpha_{(j)}})\|^2.
\]

\begin{lem}
Using the notations from Lemma \ref{polynomial decomposition},
the assignments $A\longmapsto S^{j,d}_\ell(A)$ from $\widetilde{\mathcal{T}}_\lambda^{\theta}$ into $Q^{(j)}_d\widetilde{\mathcal{T}}_\lambda^{\theta-\epsilon_j}$
are linear functions
for $\ell=0,\ldots,n_{j,d}-1$
and there are positive constants $C^{j,d}_\ell$ such that
\[
\|S^{j,d}_\ell(A)\|\leq C^{j,d}_\ell\|A\|,
\quad
A\in
\widetilde{\mathcal{T}}_\lambda^{\theta},
\quad
\ell=0,\ldots,n_{j,d}-1.
\]

Thus the functions $S^{j,d}_\ell$ can be extended to the closed algebra
$Q^{(j)}_d\mathcal{T}_\lambda^{\theta}$.
In particular, the assignment $A\longmapsto S^{j,d}_{n_{j,d}}(A)h^{j,d}_{n_{j,d}}(T_{c_j})$
is also a linear continuous function, which can also be extended to this algebra.
\end{lem}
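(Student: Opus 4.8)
The plan is to prove the norm estimate $\|S^{j,d}_\ell(A)\| \le C^{j,d}_\ell\|A\|$ by exhibiting a single vector (or a family of vectors) on which $A$ acts in a controlled way, from which the coefficients $S^{j,d}_\ell(A)$ can be read off. The point is that, by Lemma \ref{polynomial decomposition}, the operator $Q^{(j)}_d A$ restricted to the subspaces $H^{(j)}(g)$ decomposes as a polynomial in $\mathbf{T_j}$ with coefficient operators $S^{j,d}_\ell(A)$, and the polynomials $h^{j,d}_\ell(T_{c_j})$ act by scalars when we replace $g$ by an eigenvector of $\mathbf{T_j}|_{\mathbf{H}^{(j)}_d}$. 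So I would fix, for each eigenvalue $\zeta^{j,d}_i$ of $\mathbf{T_j}|_{\mathbf{H}^{(j)}_d}$, a unit eigenvector $v_i \in \mathbf{H}^{(j)}_d$ with $\mathbf{T_j}v_i = \zeta^{j,d}_i v_i$.

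First I would restrict \eqref{polynomial decomposition formula} to the subspace $H^{(j)}(v_i)$. On this subspace, using $UT_{c_j}U^* = I\otimes\cdots\otimes\mathbf{T_j}\otimes\cdots\otimes I$, each factor $h^{j,d}_\ell(T_{c_j})$ acts as multiplication by the scalar $h^{j,d}_\ell(\zeta^{j,d}_i)$, which vanishes for $\ell \ge i$ by construction of the polynomials $h^{j,d}_\ell$. Hence on $H^{(j)}(v_i)$ the formula collapses to
\[
Q^{(j)}_d A|_{H^{(j)}(v_i)} = \sum_{\ell=0}^{i-1} h^{j,d}_\ell(\zeta^{j,d}_i)\, S^{j,d}_\ell(A)|_{H^{(j)}(v_i)}.
\]
Since the operators $S^{j,d}_\ell(A)$ lie in $\widetilde{\mathcal{T}}^{\theta-\epsilon_j}_\lambda$, i.e. they do not involve $T_{c_j}$, they commute with the decomposition into $H^{(j)}(v_i)$'s and moreover $S^{j,d}_\ell(A)|_{H^{(j)}(v_i)}$ can be transported via the isometries $R^{(j)}_{\alpha_{(j)},v_i}$ to an operator that does not depend on $i$ — this is exactly the content of Lemma \ref{norm tensor product} and the discussion preceding this lemma about the isometric identification of the $H^{(j)}(g)$. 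Running $i$ from $1$ to $n_{j,d}$ gives a triangular linear system (a Vandermonde-type system, since the matrix of coefficients $(h^{j,d}_\ell(\zeta^{j,d}_i))_{\ell < i}$ is lower-triangular with nonzero diagonal entries $h^{j,d}_{i-1}(\zeta^{j,d}_i) = \prod_{r<i}(\zeta^{j,d}_i - \zeta^{j,d}_r) \ne 0$) that expresses each $S^{j,d}_\ell(A)$, transported to a fixed model space, as a fixed linear combination of the operators $Q^{(j)}_d A|_{H^{(j)}(v_i)}$.

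From this I would extract the estimate: invert the triangular system to write $S^{j,d}_\ell(A) = \sum_i \lambda^{j,d}_{\ell,i}\, (\text{transported } Q^{(j)}_d A|_{H^{(j)}(v_i)})$ with universal constants $\lambda^{j,d}_{\ell,i}$ depending only on the eigenvalues $\zeta^{j,d}_r$ (not on $A$). Using that the transport maps $R^{(j)}_{\alpha_{(j)},v_i}$ are isometric on each summand and that $\|Q^{(j)}_d A|_{H^{(j)}(v_i)}\| \le \|Q^{(j)}_d\|\,\|A\| \le \|A\|$, combined with the orthogonal-sum norm identity $\|f\|^2 = \sum_{|\alpha_{(j)}|=d}\|R^{(j)}_{\alpha_{(j)},v_i}(f_{\alpha_{(j)}})\|^2$ recalled just before the lemma, one obtains $\|S^{j,d}_\ell(A)\| \le C^{j,d}_\ell\|A\|$ with $C^{j,d}_\ell = \sum_i |\lambda^{j,d}_{\ell,i}|$. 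Linearity of $A \mapsto S^{j,d}_\ell(A)$ is immediate from the uniqueness in the polynomial division algorithm used in Lemma \ref{polynomial decomposition} (division by a fixed monic polynomial is linear in the dividend). The continuity then lets us extend each $S^{j,d}_\ell$ to the closure $Q^{(j)}_d\mathcal{T}^\theta_\lambda$, and since $S^{j,d}_{n_{j,d}}(A)h^{j,d}_{n_{j,d}}(T_{c_j}) = Q^{(j)}_d A - \sum_{\ell<n_{j,d}} S^{j,d}_\ell(A)h^{j,d}_\ell(T_{c_j})$ is a finite linear combination of already-extended continuous maps, it too extends continuously.

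The main obstacle I anticipate is bookkeeping the isometric identifications carefully: the operators $S^{j,d}_\ell(A)$ a priori act on all of $\mathcal{A}^2_\lambda(\mathbb{B}^n)$ (or on $Q^{(j)}_d$-images), but to read them off from the scalar equations above one must first check that $S^{j,d}_\ell(A)$ genuinely commutes with the orthogonal decomposition $H^{(j)}_d = \bigoplus_{|\alpha_{(j)}|=d} H^{(j)}(e^{(j)}_{\alpha_{(j)}})$ and that, under the transport maps $R^{(j)}_{\alpha_{(j)},v_i}$, its restriction is independent of which eigenvector $v_i$ was inserted in the $j$-th tensor slot — this is where the hypothesis $S^{j,d}_\ell(A) \in \widetilde{\mathcal{T}}^{\theta-\epsilon_j}_\lambda$ (no $T_{c_j}$) is essential, since such operators act on the $j$-th tensor factor only through the quasi-radial part $\cT_\lambda(L^\infty_{k-qr})$, which is scalar on $\mathbf{H}^{(j)}_d$. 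Once that commutation and uniformity is nailed down, the Vandermonde inversion and the norm estimate are routine.
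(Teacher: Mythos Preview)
Your proposal is correct and follows essentially the same strategy as the paper's proof: restrict \eqref{polynomial decomposition formula} to subspaces $H^{(j)}(v_i)$ for eigenvectors $v_i$ of $\mathbf{T_j}|_{\mathbf{H}^{(j)}_d}$, use that the $h^{j,d}_\ell(T_{c_j})$ act by scalars there, and exploit that $S^{j,d}_\ell(A)\in\widetilde{\mathcal{T}}^{\theta-\epsilon_j}_\lambda$ commutes with the transport maps $R^{(j)}_{\alpha_{(j)},g}$ to read off norm bounds. The only cosmetic difference is that the paper carries out the triangular solve as an explicit induction on $\ell$ (forward substitution, using the eigenvector for $\zeta^{j,d}_{\ell+2}$ at step $\ell+1$ and the already-bounded map $S'(A)=Q^{(j)}_dA-\sum_{\ell'\le\ell}S^{j,d}_{\ell'}(A)h^{j,d}_{\ell'}(T_{c_j})$), whereas you phrase it as inverting the full lower-triangular Vandermonde-type system at once; the two are the same computation, and the paper's recursive constants $C^{j,d}_{\ell+1}$ are exactly what falls out of back-substitution.
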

\begin{proof}

We proceed by induction on $\ell$.
Let $g\in\mathbf{H}^{(j)}_d$ be a unimodular eigenvector such that
$\mathbf{T_j}g=\zeta^{j,d}_1g$
and $f\in H^{(j)}_d$
and we write as before
$f=\sum_{|\alpha_{(j)}|=d}f_{\alpha_{(j)}}$,
where $f_{\alpha_{(j)}}\in H^{(j)}(e^{(j)}_{\alpha_{(j)}})$,
for every $\alpha_{(j)}$ with $|\alpha_{(j)}|=d$.

By construction we have
\[
h^{j,d}_{\ell}(T_{c_j})R^{(j)}_{\alpha_{(j)},g}(f_{\alpha_{(j)}})
=0,\quad
\ell=1,\ldots,n_{j,d}
\]
and thus by \eqref{polynomial decomposition formula} we get
\[
S^{j,d}_0(A)(R^{(j)}_{\alpha_{(j)},g}(f_{\alpha_{(j)}}))=
Q^{(j)}_dA R^{(j)}_{\alpha_{(j)},g}(f_{\alpha_{(j)}}).
\]

Moreover, since the operator $T_{c_j}$ does not appear in the expression of $S^{j,d}_0(A)$,
we observe that $S^{j,d}_0(A)(f_{\alpha_{(j)}})\in H^{(j)}(e^{(j)}_{\alpha_{(j)}})$ and
\[
R^{(j)}_{\alpha_{(j)},g}(S^{j,d}_0(A)(f_{\alpha_{(j)}}))=S^{j,d}_0(A)(R^{(j)}_{\alpha_{(j)},g}(f_{\alpha_{(j)}})).
\]
In particular, by Lemma \ref{norm tensor product},
\[
\|S^{j,d}_0(A)(f_{\alpha_{(j)}})\|=\|S^{j,d}_0(A)(R^{(j)}_{\alpha_{(j)},g}(f_{\alpha_{(j)}}))\|
=\|Q^{(j)}_dA R^{(j)}_{\alpha_{(j)},g}(f_{\alpha_{(j)}})\|.
\]

We set
\[
C^{j,d}_0=\operatorname{dim}(\mathbf{H}^{(j)}_d)^{1/2}=
\left|\left\{\alpha_{(j)}\in\bZ_+^{k_j}\colon|\alpha_{(j)}|=d\right\}\right|^{1/2}.
\]
By the Cauchy-Schwarz inequality we have
\begin{align*}
\|S^{j,d}_0(A)f\|^2&=\|\sum_{|\alpha_{(j)}|=d}S^{j,d}_0(A)(f_{\alpha_{(j)}})\|^2\\
&\leq
(C^{j,d}_0)^2\sum_{|\alpha_{(j)}|=d}\|S^{j,d}_0(A)(f_{\alpha_{(j)}})\|^2\\
&=
(C^{j,d}_0)^2\sum_{|\alpha_{(j)}|=d}\|Q^{(j)}_dAR^{(j)}_{\alpha_{(j)},g}(f_{\alpha_{(j)}})\|^2\\
&\leq
(C^{j,d}_0)^2\|A\|^2\sum_{|\alpha_{(j)}|=d}\|R^{(j)}_{\alpha_{(j)},g}(f_{\alpha_{(j)}})\|^2\\
&=
(C^{j,d}_0)^2\|A\|^2\sum_{|\alpha_{(j)}|=d}\|f_{\alpha_{(j)}}\|^2\\
&=
(C^{j,d}_0)^2\|A\|^2\|f\|^2.
\end{align*}
Since $f$ was arbitrary, we conclude $\|S^{j,d}_0(A)\|\leq C^{j,d}_0 \|A\|$.
In particular, the assignment $A\mapsto S^{j,d}_0(A)$ is a well-defined
linear and bounded function.

Now suppose we have constructed the constants $C^{j,d}_{\ell'}$
with the desired properties for $\ell'=0,1,\ldots,\ell<n_{j,d}-1$.
Using the decomposition from Lemma \ref{polynomial decomposition},
we define the application $S'$ acting on $Q^{(j)}_d\widetilde{\mathcal{T}}_\lambda^{\theta}$
by
\begin{align*}
S'(A)&=Q^{(j)}_dA-S_0^{j,d}(A)-\cdots-S_{\ell}^{j,d}(A)h^{j,d}_{\ell}(T_{c_j})\\
&=
S^{j,d}_{n_{j,d}}(A)h^{j,d}_{n_{j,d}}(T_{c_j})
+\cdots+
S^{j,d}_{\ell+1}(A)h^{j,d}_{\ell+1}(T_{c_j}), 
\end{align*}
for every $A\in Q^{(j)}_d\widetilde{\mathcal{T}}_\lambda^{\theta}$.

By induction hypothesis, $S'$ is continuous with
$\|S'\|\leq 1+C^{j,d}_0+\cdots+C^{j,d}_{\ell}$.
Reasoning similarly as before, let $g\in\mathbf{H}^{(j)}_d$ be an unimodular eigenvector
such that
$\mathbf{T_j}g=\zeta^{j,d}_{\ell+2}g$
and $f\in H^{(j)}_d$ with $f=\sum_{|\alpha_{(j)}|=d}f_{\alpha_{(j)}}$, as above.

Since
\[
Q^{(j)}_dS'(A)
R^{(j)}_{\alpha_{(j)},g}
(f_{\alpha_{(j)}})
=
S^{j,d}_{\ell+1}(A)
(\zeta^{j,d}_{\ell+2}-\zeta^{j,d}_{\ell+1})
\cdots
(\zeta^{j,d}_{\ell+2}-\zeta^{j,d}_1)
R^{(j)}_{\alpha_{(j)},g}
(f_{\alpha_{(j)}})
\]
and the operator $T_{c_j}$ does not appear in the expression of $S^{j,d}_{\ell+1}(A)$,
we can use similar arguments as in the case $\ell=0$
and obtain
\begin{align*}
\|S^{j,d}_{\ell+1}(A)f\|^2&\leq
(C^{j,d}_{0})^2|\zeta^{j,d}_{\ell+2}-\zeta^{j,d}_{\ell+1}|^{-2}
\cdots
|\zeta^{j,d}_{\ell+2}-\zeta^{j,d}_1|^{-2}\\
&\times
\sum_{|\alpha_{(j)}|=d}\|Q^{(j)}_dS'(A)(R^{(j)}_{\alpha_{(j)},g}
(f_{\alpha_{(j)}}))\|^2\\
&\leq
(C^{j,d}_{\ell+1})^2\|A\|^2\|f\|^2,
\end{align*}
where
\[
C^{j,d}_{\ell+1}=
|\zeta^{j,d}_{\ell+2}-\zeta^{j,d}_{\ell+1}|^{-1}
\cdots
|\zeta^{j,d}_{\ell+2}-\zeta^{j,d}_1|^{-1}
C_0^{j,d}
\left[
\left(
1+C^{j,d}_{0}+\cdots+C^{j,d}_{\ell}
\right)
\right]^{1/2}.\qedhere
\]
\end{proof}

The next corollary helps us understand what the operators from
the radical $\operatorname{Rad}(\mathcal{T}_\lambda^{\theta})$ look like on the spaces $H^{(j)}_d$.

\begin{cor}\label{polynomial decomposition 2}
If $T\in\operatorname{Rad}(\mathcal{T}_\lambda^{\theta})$,
then there are unique operators 
$S(T)\in\operatorname{clos}(\widetilde{\mathcal{T}}_\lambda^{\theta}\cap \operatorname{Rad}(\mathcal{T}_\lambda^{\theta}))$
and $S^{j,d}_{\ell}(T)\in\operatorname{Rad}(\mathcal{T}_\lambda^{\theta-\epsilon_j}), \ell=0,\ldots,n_{j,d}-1$,
such that
\begin{align*}
Q^{(j)}_dT&=
S(T)\\
&+S^{j,d}_{n_{j,d}-1}(T)h^{j,d}_{n_{j,d}-1}(T_{c_j})\\
&\cdots\\
&+S^{j,d}_{1}(T)h^{j,d}_{1}(T_{c_j})\\
&+S^{j,d}_{0}(T).
\end{align*}
Here $S$ represents the extension of the map
$A\longmapsto S^{j,d}_{n_{j,d}}(A)h^{j,d}_{n_{j,d}}(T_{c_j})$
from Lemma \ref{polynomial decomposition}
to the algebra $Q^{(j)}_d
\mathcal{T}_\lambda^{\theta}$.
\end{cor}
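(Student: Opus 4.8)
The plan is to bootstrap from the dense subalgebra. Fix $j\in\Jt$ and $d\in\bZ_+$. First I would take a sequence $(A_k)\subset\widetilde{\mathcal{T}}_\lambda^{\theta}$ with $A_k\to T$ in norm and apply the polynomial decomposition \eqref{polynomial decomposition formula} of Lemma~\ref{polynomial decomposition} to each $Q^{(j)}_dA_k$. By the preceding lemma the maps $A\mapsto S^{j,d}_\ell(A)$ ($\ell=0,\ldots,n_{j,d}-1$) and $A\mapsto S^{j,d}_{n_{j,d}}(A)h^{j,d}_{n_{j,d}}(T_{c_j})$ extend to bounded linear maps on the closed algebra $Q^{(j)}_d\mathcal{T}_\lambda^{\theta}$; since $Q^{(j)}_dA_k\to Q^{(j)}_dT$, the limits $S^{j,d}_\ell(T)$ and $S(T)$ of the corresponding components exist, are independent of the chosen sequence, and yield the asserted decomposition of $Q^{(j)}_dT$ by continuity (here and throughout $h^{j,d}_0\equiv 1$). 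Two memberships are then automatic: each approximant $S^{j,d}_{n_{j,d}}(A_k)h^{j,d}_{n_{j,d}}(T_{c_j})$ lies in $\widetilde{\mathcal{T}}_\lambda^{\theta}\cap\operatorname{Rad}(\mathcal{T}_\lambda^{\theta})$ by Lemma~\ref{polynomial decomposition}, so $S(T)\in\operatorname{clos}(\widetilde{\mathcal{T}}_\lambda^{\theta}\cap\operatorname{Rad}(\mathcal{T}_\lambda^{\theta}))$, and each component $S^{j,d}_\ell(A_k)$ lies in $Q^{(j)}_d\widetilde{\mathcal{T}}_\lambda^{\theta-\epsilon_j}$, so $S^{j,d}_\ell(T)\in Q^{(j)}_d\mathcal{T}_\lambda^{\theta-\epsilon_j}$.

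The substantive step is to upgrade $S^{j,d}_\ell(T)\in\mathcal{T}_\lambda^{\theta-\epsilon_j}$ to $S^{j,d}_\ell(T)\in\operatorname{Rad}(\mathcal{T}_\lambda^{\theta-\epsilon_j})$ for $\ell<n_{j,d}$, i.e.\ to show $\varphi(S^{j,d}_\ell(T))=0$ for every $\varphi\in M(\mathcal{T}_\lambda^{\theta-\epsilon_j})$. Since $Q^{(j)}_d\in\mathcal{T}_\lambda(L^\infty_{k-qr})\subset\mathcal{T}_\lambda^{\theta-\epsilon_j}$ and $S^{j,d}_\ell(T)=Q^{(j)}_dS^{j,d}_\ell(T)$, the relation $\varphi(S^{j,d}_\ell(T))=\varphi(Q^{(j)}_d)\varphi(S^{j,d}_\ell(T))$ leaves only the functionals with $\varphi(Q^{(j)}_d)=1$ to handle. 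Both $\mathcal{T}_\lambda^{\theta-\epsilon_j}$ and $\mathcal{T}_\lambda^{\theta}$ are algebras of the form $\mathcal{T}_\lambda(\mathbf{d})$ (take the symbols $c_p$ with $p$ outside the relevant index set to be the constant function $1$, so that $T_{c_p}=I$ and the generated algebra is unchanged), so Theorem~\ref{Maximal ideal space Tdx} describes their maximal ideal spaces. Writing $\varphi=(\mu,\zeta')$, the condition $\mu(Q^{(j)}_d)=1$ forces $j$ to be a finite coordinate of $\mu$ with value $d$; hence for each $i\in\{1,\ldots,n_{j,d}\}$, inserting $\zeta^{j,d}_i\in\operatorname{sp}(\mathbf{T_j}|_{\mathbf{H}^{(j)}_d})$ in the $j$-th slot of $\varphi$ produces, by Theorem~\ref{Maximal ideal space Tdx}, a multiplicative functional $\psi_i$ on $\mathcal{T}_\lambda^{\theta}$ that restricts to $\varphi$ on $\mathcal{T}_\lambda^{\theta-\epsilon_j}$ and satisfies $\psi_i(T_{c_j})=\zeta^{j,d}_i$. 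Evaluating the decomposition of $Q^{(j)}_dT$ at $\psi_i$, with $\psi_i(T)=0$ (as $T\in\operatorname{Rad}(\mathcal{T}_\lambda^{\theta})$) and $\psi_i(S(T))=0$ (the radical being closed), gives
\[
\sum_{\ell=0}^{n_{j,d}-1}\varphi(S^{j,d}_\ell(T))\,h^{j,d}_\ell(\zeta^{j,d}_i)=0,\qquad i=1,\ldots,n_{j,d}.
\]
Since $h^{j,d}_\ell(\zeta^{j,d}_i)=0$ for $\ell\ge i$ while $h^{j,d}_{i-1}(\zeta^{j,d}_i)=\prod_{p<i}(\zeta^{j,d}_i-\zeta^{j,d}_p)\ne 0$, this is a triangular system with nonzero diagonal, and solving it from $i=1$ upward yields $\varphi(S^{j,d}_\ell(T))=0$ for all $\ell$.

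For uniqueness, I would note that $S^{j,d}_\ell(T)$ and $S(T)$ are, by construction, the images of $Q^{(j)}_dT$ under fixed sequence-independent continuous linear maps, so they are determined by $T$; and that any competing representation $Q^{(j)}_dT=S'+\sum_{\ell<n_{j,d}}S'_\ell h^{j,d}_\ell(T_{c_j})$ of the prescribed form must agree with it, because applying the extended map $S^{j,d}_\ell$ to the difference returns the $\ell$-th component and annihilates the others --- using the uniqueness of the polynomial division in Lemma~\ref{polynomial decomposition}, the fact that $S'_\ell$ does not involve $T_{c_j}$, and that $S'$ lies in the range of the extension of $A\mapsto S^{j,d}_{n_{j,d}}(A)h^{j,d}_{n_{j,d}}(T_{c_j})$.

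I expect the radical-membership step to be the main obstacle: it requires transporting the vanishing of $T$ under the larger maximal ideal space $M(\mathcal{T}_\lambda^{\theta})$ to vanishing of the components $S^{j,d}_\ell(T)$ under the smaller space $M(\mathcal{T}_\lambda^{\theta-\epsilon_j})$, and the mechanism that makes this work is that every functional of $\mathcal{T}_\lambda^{\theta-\epsilon_j}$ with $\varphi(Q^{(j)}_d)=1$ admits exactly $n_{j,d}$ lifts to $M(\mathcal{T}_\lambda^{\theta})$ --- one per eigenvalue of $\mathbf{T_j}|_{\mathbf{H}^{(j)}_d}$ --- which is what supplies the triangular system. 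A minor subtlety is confirming that these lifts indeed restrict back to $\varphi$ and that the ``$c_p\equiv 1$ padding'' really exhibits $\mathcal{T}_\lambda^{\theta}$ and $\mathcal{T}_\lambda^{\theta-\epsilon_j}$ as instances of Theorem~\ref{Maximal ideal space Tdx}.
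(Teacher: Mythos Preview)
Your proposal is correct and follows essentially the same approach as the paper: obtain the decomposition by continuity from the dense subalgebra, then verify the radical memberships by lifting functionals on $\mathcal{T}_\lambda^{\theta-\epsilon_j}$ with $\varphi(Q^{(j)}_d)=1$ to functionals on $\mathcal{T}_\lambda^{\theta}$ via the eigenvalues $\zeta^{j,d}_i$. The paper organizes the radical-membership step as an induction on $\ell$ (modify $\zeta_j$ to $\zeta^{j,d}_{\ell+1}$ and peel off one term at a time), while you package the same computation as forward substitution in a triangular linear system; these are the same argument, and your explicit treatment of uniqueness is a small addition the paper leaves implicit.
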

\begin{proof}
By the previous lemma, the maps $S^{j,d}_{\ell}$ are continuous and
so we obtain the formula from the lemma by approximating $T$ with elements of
the dense subalgebra $\widetilde{\mathcal{T}}_\lambda^{\theta}$.

By Lemma \ref{polynomial decomposition}, we clearly have
$S(T)\in\operatorname{clos}(\widetilde{\mathcal{T}}_\lambda^{\theta}\cap \operatorname{Rad}(\mathcal{T}_\lambda^{\theta}))$.
Thus we only need to show that
$S^{j,d}_\ell(T)\in\operatorname{Rad}(\mathcal{T}_\lambda^{\theta-\epsilon_j})$,
$\ell=1,\ldots,n_{j,d}-1$.
Note that for such $\ell$ we have $Q^{(j)}_dS^{j,d}_{\ell}(T)=S^{j,d}_{\ell}(T)$.
In particular, $\psi(S^{j,d}_{\ell}(T))=0$ for every functional $\psi=(\mu,\zeta)\in M(\cT_\lambda(\mathbf{d}))$ such that $\mu$ is reached by a net $(\ka^\alpha)$
such that $\ka^\alpha_j$ does not converge to $d$.
Hence it suffices to prove
that $\psi(S^{j,d}_{\ell}(T))=0$
where $\psi=(\mu,\zeta)\in M(\cT_\lambda(\mathbf{d}))$ and $\mu$
is reached by a net $(\ka^\alpha)$ with $\ka^\alpha=d$.
Denote by $M_{j,d}$ the set of all such multiplicative functionals $\psi$.
By Theorem \ref{Maximal ideal space Tdx}, if $(\mu,\zeta)\in M_{j,d}$,
then $\zeta_j\in\spc(\mathbf{T_j}|_{\mathbf{H}_{d}^{(j)}})$.

We proceed by induction.
Consider first the case $\ell=0$.
Let $\psi=(\mu,\zeta)\in M_{j,d}$
and let $\zeta'$ be equal $\zeta$ with the possible exception that
$\zeta'_j=\zeta^{j,d}_1$. Then $h^{j,d}_{\ell}(\zeta'_j)=0$,
for any $\ell\in\{1,\ldots,n_{j,d}\}$, and thus for the functional $\psi'=(\mu,\zeta')$ we have
$\psi'(S^{j,d}_0(T))=\psi'(Q^{(j)}_dT)=0$.

Since $S^{j,d}_0(T)\in\mathcal{T}_\lambda^{\theta-\epsilon_j}$,
the operator $S^{j,d}_0(T)$
does not depend on the operator $T_{c_j}$.
Then $\psi(S^{j,d}_0(T))$ does not depend on $\zeta_j$ and hence $\psi(S^{j,d}_0(T))=\psi'(S^{j,d}_0(T))=0$.

Suppose we have proved that
$\psi(S^{j,d}_{\ell}(T))=0$ for every $\psi\in M_{j,d}$ and
$0\leq\ell< L<n_{j,d}-1$.

Then, by subtracting the first $L-1$ summands in the expression for $Q^{(j)}_dT$,
we see that the operator
\[
T_L=S(T)+S^{j,d}_{n_{j,d}-1}(T)h^{j,d}_{n_{j,d}-1}(T_{c_j})
+\cdots+
S^{j,d}_{L}(T)h^{j,d}_{L}(T_{c_j})
\]
belongs to $\operatorname{Rad}(\mathcal{T}_\lambda^{\theta})$.

Let $\psi=(\mu,\zeta)\in M_{j,d}$ and define $\psi'=(\mu,\zeta')$
where $\zeta'$ equals $\zeta$
with the possible difference that $\zeta'_j=\zeta^{j,d}_{L+1}$.
Therefore we have
\[
(\zeta^{j,d}_{L+1}-\zeta^{j,d}_L)\cdots(\zeta^{j,d}_{L+1}-\zeta^{j,d}_1)\psi'(S^{j,d}_L(T))=\psi'(T_L)=0.
\]

Since, once again, $\psi(S^{j,d}_L(T))$ does not depend on $\zeta_j$,
we conclude that $\psi(S^{j,d}_L(T))=\psi'(S^{j,d}_L(T))=0$.
\end{proof}

Now we are able to prove the main result of this section.
\begin{thm}\label{radical density}
Let $\theta\in\Theta$.
Then $\widetilde{\mathcal{T}}_\lambda^{\theta}\cap\operatorname{Rad}(\mathcal{T}_\lambda^{\theta})$ is dense in
$\operatorname{Rad}(\mathcal{T}_\lambda^{\theta})$.
\end{thm}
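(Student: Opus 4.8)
The plan is to argue by induction on $|\Jt|$, the number of generators $T_{c_j}$ that appear in $\mathcal{T}_\lambda^{\theta}$. When $\Jt=\emptyset$ the algebra $\mathcal{T}_\lambda^{\theta}$ equals the $C^*$-algebra $\mathcal{T}_\lambda(L^\infty_{k-qr})$, which is semi-simple, so both sides of the claimed equality are $\{0\}$. For the inductive step I assume the statement for every $\theta-\epsilon_j$ with $j\in\Jt$, fix $T\in\operatorname{Rad}(\mathcal{T}_\lambda^{\theta})$, and aim to prove $T\in\operatorname{clos}(\widetilde{\mathcal{T}}_\lambda^{\theta}\cap\operatorname{Rad}(\mathcal{T}_\lambda^{\theta}))$. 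Two elementary facts will be used repeatedly: the set $\operatorname{clos}(\widetilde{\mathcal{T}}_\lambda^{\theta}\cap\operatorname{Rad}(\mathcal{T}_\lambda^{\theta}))$ is a closed subspace stable under multiplication by any element of $\widetilde{\mathcal{T}}_\lambda^{\theta}$ (since $\operatorname{Rad}$ is an ideal and $\widetilde{\mathcal{T}}_\lambda^{\theta}$ an algebra), and $\operatorname{Rad}(\mathcal{T}_\lambda^{\theta-\epsilon_j})\subset\operatorname{Rad}(\mathcal{T}_\lambda^{\theta})$ because every character of $\mathcal{T}_\lambda^{\theta}$ restricts to a character of the unital subalgebra $\mathcal{T}_\lambda^{\theta-\epsilon_j}$; in particular $\widetilde{\mathcal{T}}_\lambda^{\theta-\epsilon_j}\cap\operatorname{Rad}(\mathcal{T}_\lambda^{\theta-\epsilon_j})\subset\widetilde{\mathcal{T}}_\lambda^{\theta}\cap\operatorname{Rad}(\mathcal{T}_\lambda^{\theta})$.

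First I would localize $T$ on the subspaces $H^{(j)}_d$. Fix $j\in\Jt$ and $d\in\bZ_+$ and invoke Corollary \ref{polynomial decomposition 2}, which expresses $Q^{(j)}_dT$ as the sum of $S(T)\in\operatorname{clos}(\widetilde{\mathcal{T}}_\lambda^{\theta}\cap\operatorname{Rad}(\mathcal{T}_\lambda^{\theta}))$ and of finitely many terms $S^{j,d}_\ell(T)h^{j,d}_\ell(T_{c_j})$ with $S^{j,d}_\ell(T)\in\operatorname{Rad}(\mathcal{T}_\lambda^{\theta-\epsilon_j})$. By the inductive hypothesis each $S^{j,d}_\ell(T)$ is a norm limit of elements of $\widetilde{\mathcal{T}}_\lambda^{\theta-\epsilon_j}\cap\operatorname{Rad}(\mathcal{T}_\lambda^{\theta-\epsilon_j})\subset\widetilde{\mathcal{T}}_\lambda^{\theta}\cap\operatorname{Rad}(\mathcal{T}_\lambda^{\theta})$, and multiplying by $h^{j,d}_\ell(T_{c_j})\in\widetilde{\mathcal{T}}_\lambda^{\theta}$ keeps us inside $\operatorname{clos}(\widetilde{\mathcal{T}}_\lambda^{\theta}\cap\operatorname{Rad}(\mathcal{T}_\lambda^{\theta}))$. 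Hence $Q^{(j)}_dT$, and therefore $\widetilde{Q}^{(j)}_dT=\sum_{0\le d'\le d}Q^{(j)}_{d'}T$, lies in $\operatorname{clos}(\widetilde{\mathcal{T}}_\lambda^{\theta}\cap\operatorname{Rad}(\mathcal{T}_\lambda^{\theta}))$ for every $j\in\Jt$ and every $d\in\bZ_+$.

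Next I would globalize via the ideal $\mathcal{J}^\theta$. By Lemma \ref{Rad subset J} we have $T\in\mathcal{J}^\theta$, and $\mathcal{J}^\theta$ is the closure of $\sum_{j\in\Jt}\mathcal{J}^{(j)}$, where $\mathcal{J}^{(j)}$ is the closed ideal generated by the operators compact in the $j$-th tensor slot. For $R$ of the form $U^*(I\otimes\cdots\otimes K\otimes\cdots\otimes I)U$ the discussion preceding \eqref{projections QS def} gives $\|R-\widetilde{Q}^{(j)}_dR\|\to 0$, and an $\varepsilon/3$ argument using $\|\widetilde{Q}^{(j)}_d\|\le 1$ extends this to all $R\in\mathcal{J}^{(j)}$. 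Given $\varepsilon>0$, choose $R_j\in\mathcal{J}^{(j)}$ ($j\in\Jt$) with $\|T-\sum_{j}R_j\|$ small and then $d_j$ with $\|R_j-\widetilde{Q}^{(j)}_{d_j}R_j\|$ small, so that $\|T-\sum_{j}\widetilde{Q}^{(j)}_{d_j}R_j\|<\varepsilon$. Writing $\Jt=\{j_1,\dots,j_L\}$, let $P=P_1+\cdots+P_L$ be the orthogonal projection produced by Lemma \ref{projections QS} from $\widetilde{Q}^{(j_1)}_{d_{j_1}},\dots,\widetilde{Q}^{(j_L)}_{d_{j_L}}$. Its range contains the range of each $\widetilde{Q}^{(j)}_{d_j}$, so $P$ fixes $\sum_{j}\widetilde{Q}^{(j)}_{d_j}R_j$, whence $\|T-PT\|\le\|T-\sum_{j}\widetilde{Q}^{(j)}_{d_j}R_j\|+\|P\bigl(\sum_{j}\widetilde{Q}^{(j)}_{d_j}R_j-T\bigr)\|<2\varepsilon$. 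On the other hand $PT=\sum_{\ell=1}^{L}P_\ell T$, and each $P_\ell T$ lies in $\operatorname{clos}(\widetilde{\mathcal{T}}_\lambda^{\theta}\cap\operatorname{Rad}(\mathcal{T}_\lambda^{\theta}))$: this follows by induction on $\ell$ from \eqref{projections QS def}--\eqref{projections QS def2}, since $P_1T=\widetilde{Q}^{(j_1)}_{d_{j_1}}T$ is in that set by the previous paragraph, and at each step one only adds $\widetilde{Q}^{(j_{\ell+1})}_{d_{j_{\ell+1}}}T$ (again in it) and subtracts $\widetilde{Q}^{(j_{\ell+1})}_{d_{j_{\ell+1}}}(P_1+\cdots+P_\ell)T$, which stays in it because $\widetilde{Q}^{(j_{\ell+1})}_{d_{j_{\ell+1}}}\in\mathcal{T}_\lambda(L^\infty_{k-qr})\subset\widetilde{\mathcal{T}}_\lambda^{\theta}$. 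Consequently $PT\in\operatorname{clos}(\widetilde{\mathcal{T}}_\lambda^{\theta}\cap\operatorname{Rad}(\mathcal{T}_\lambda^{\theta}))$, so $\operatorname{dist}\bigl(T,\operatorname{clos}(\widetilde{\mathcal{T}}_\lambda^{\theta}\cap\operatorname{Rad}(\mathcal{T}_\lambda^{\theta}))\bigr)<2\varepsilon$; letting $\varepsilon\to 0$ completes the induction.

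The step I expect to be the main obstacle is precisely the globalization: Lemma \ref{Rad subset J} only lets us approximate $T$ by sums of operators each compact in a single slot $j\in\Jt$, and the associated truncations $\widetilde{Q}^{(j)}_{d_j}$ fail to be mutually orthogonal, so one cannot simply push the argument of the previous paragraph through summand by summand. Assembling the $\widetilde{Q}^{(j)}_{d_j}$ into a single bounded projection $P$ that simultaneously fixes all of $\sum_{j}\widetilde{Q}^{(j)}_{d_j}R_j$ and whose pieces $P_\ell T$ remain inside $\operatorname{clos}(\widetilde{\mathcal{T}}_\lambda^{\theta}\cap\operatorname{Rad}(\mathcal{T}_\lambda^{\theta}))$ is exactly the role of Lemma \ref{projections QS}, as anticipated in the remark following it. A secondary, purely technical point is the upgrade from norm convergence of the truncations on single-slot-compact operators to all of $\mathcal{J}^{(j)}$, obtained by density and the uniform bound $\|\widetilde{Q}^{(j)}_d\|\le 1$.
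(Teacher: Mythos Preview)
Your proposal is correct and follows essentially the same route as the paper's proof: induction on $|\Jt|$, the inclusion $\operatorname{Rad}(\mathcal{T}_\lambda^{\theta})\subset\mathcal{J}^\theta$ from Lemma~\ref{Rad subset J} to approximate $T$ by a sum $\sum_j\widetilde{Q}^{(j)}_{d_j}R_j$, the orthogonalized projections $P_\ell$ of Lemma~\ref{projections QS} to pass from $T$ to $\sum_\ell P_\ell T$, and Corollary~\ref{polynomial decomposition 2} together with the inductive hypothesis to handle each $Q^{(j)}_dT$. The only cosmetic differences are that you perform the localization step first and you justify $P_\ell T\in\operatorname{clos}(\widetilde{\mathcal{T}}_\lambda^{\theta}\cap\operatorname{Rad}(\mathcal{T}_\lambda^{\theta}))$ by a short induction on $\ell$, whereas the paper observes directly that each $P_\ell$ has the form $A\widetilde{Q}^{(j_\ell)}_{d_{j_\ell}}$ with $A\in\widetilde{\mathcal{T}}_\lambda^{\theta}$; both arguments are equivalent.
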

\begin{proof}
We proceed by induction on $|\theta|$.

Suppose $|\theta|=0$.
In this case $\mathcal{T}_\lambda^{\theta}=\mathcal{T}_\lambda(L^\infty_{k-qr})$,
which is a $C^*$-algebra and then $\operatorname{Rad}(\mathcal{T}_\lambda^{\theta})=\{0\}$,
from which the claim follows.

As induction hypothesis assume that
$\widetilde{\mathcal{T}}_\lambda^{\theta}\cap\operatorname{Rad}(\mathcal{T}_\lambda^{\theta})$
is dense in
$\operatorname{Rad}(\mathcal{T}_\lambda^{\theta})$ for every $\theta\in\Theta$ such that $|\theta|\leq m'<m$.

Let
$\theta\in\Theta$ with $|\theta|=m'+1$ and let $T\in\operatorname{Rad}(\mathcal{T}_\lambda^{\theta})$.
By Lemma \ref{Rad subset J},
we have $T\in\mathcal{J^\theta}$ and therefore there
is some operator $T'\in\mathcal{J^\theta}$ with
$\|T-T'\|<\varepsilon$
and
\[
T'=
\sum_{j\in J^\theta}
\widetilde{Q}^{(j)}_{\ka_j}S_j,
\]
for some operators $S_j\in\mathcal{L}(\mathcal{A}_\lambda^2(\mathbb{B}^{n}))$
and $\ka_j\in\bZ_+$.

Let $P_j$, $j=1,\ldots,m$, be the projections
defined by \eqref{projections QS def} and \eqref{projections QS def2}. By Lemma \ref{projections QS} we have $T'=\sum_{j\in J^\theta}P_j T'$.
Define now the projection $P_0=I-\sum_{j\in J^\theta}P_j$.
Note that this is the orthogonal projection from $\mathcal{A}_\lambda^2(\mathbb{B}^{n})$
onto the orthogonal complement of the space generated by the images of
the operators $\widetilde{Q}^{(j)}_{\ka_j}$, $j\in J^\theta$.

Then we have
\[
T'=(I-P_0)T'\quad\text{and}\quad P_0T'=0,
\]
and thus
\[
\|T-(I-P_0)T\|=\|P_0T\|=\|P_0(T-T')\|\leq\|T-T'\|<\varepsilon.
\]

Therefore, it suffices to show that
\[
(I-P_0)T=\sum_{j\in J^\theta}P_j
T\in\operatorname{clos}\left(\widetilde{\mathcal{T}}_\lambda^{\theta}\cap\operatorname{Rad}(\mathcal{T}_\lambda^{\theta})\right).
\]

Since the projections $P_j$ are mutually orthogonal,
each operator $P_j T$ is in the radical and, by the definition of $P_j$,
can be written in the form $A\widetilde{Q}^{(j)}_{\ka_j}T$ for some
$A\in \widetilde{\mathcal{T}}_\lambda^{\theta}$.
Thus we are done if we show that
\[
Q^{(j)}_{d}T\in\operatorname{clos}\left(\widetilde{\mathcal{T}}_\lambda^{\theta}\cap\operatorname{Rad}(\mathcal{T}_\lambda^{\theta})\right),
\]
for every $0\leq d \leq \ka_j$ and $j\in J^\theta$.

Fix some $j$ and $d$ as above.
By Corollary \ref{polynomial decomposition 2}, we have
\[
Q^{(j)}_dT=S(T)+S^{j,d}_{n_{j,d}-1}(T)h^{j,d}_{n_{j,d}-1}(T_{c_j})
+\cdots+
S^{j,d}_{1}(T)h^{j,d}_{1}(T_{c_j})
+S^{j,d}_{0}(T),
\]
where $S(T)\in\operatorname{clos}(\widetilde{\mathcal{T}}_\lambda^{\theta}\cap\operatorname{Rad}(\mathcal{T}_\lambda^{\theta}))$ and
$S^{j,d}_{\ell}(T)\in\operatorname{Rad}(\mathcal{T}_\lambda^{\theta-\epsilon_j})$
for every $\ell=0,\ldots,n_{j,d}-1$.
Since $|\theta-\epsilon_j|=m'$, by induction hypothesis
the operators $S^{j,d}_{\ell}(T)$ lie in the algebra
\[
\operatorname{Rad}(\mathcal{T}_\lambda^{\theta-\epsilon_j})
=
\operatorname{clos}\left(\widetilde{\mathcal{T}}_\lambda^{\theta-\epsilon_j}\cap\operatorname{Rad}(\mathcal{T}_\lambda^{\theta-\epsilon_j})\right)
\subset
\operatorname{clos}\left(\widetilde{\mathcal{T}}_\lambda^{\theta}\cap\operatorname{Rad}(\mathcal{T}_\lambda^{\theta})\right).\qedhere
\]
\end{proof}

Taking $\theta=\mathbf{1}$, we get
\begin{cor}
The algebra $\widetilde{\mathcal{T}(\mathbf{d})}\cap\operatorname{Rad}(\mathcal{T}(\mathbf{d}))$
is dense in $\operatorname{Rad}(\mathcal{T}(\mathbf{d}))$.
\end{cor}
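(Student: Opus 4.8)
The plan is to read off this corollary as the special case $\theta=\mathbf{1}$ of Theorem \ref{radical density}. First I would note that for $\theta=\mathbf{1}=(1,\ldots,1)$ we have $\Jt=\{1,\ldots,m\}$, so that the algebra $\mathcal{T}_\lambda^{\mathbf{1}}$, which by definition is generated by $\mathcal{T}_\lambda(L^\infty_{k-qr})$ together with those $T_{c_j}$ for which $\theta_j=1$, is in fact generated by $\mathcal{T}_\lambda(L^\infty_{k-qr})$ and \emph{all} of $T_{c_1},\ldots,T_{c_m}$; this is exactly the algebra $\cT_\lambda(\mathbf{d})$. In the same way the non-closed dense subalgebra $\widetilde{\mathcal{T}}_\lambda^{\mathbf{1}}$, consisting of finite sums of finite products of these generators, coincides with $\widetilde{\cT_\lambda(\mathbf{d})}$, the algebra of operators of the form $\sum_{\rho\in F}D_{\gamma_\rho}T^\rho$ introduced at the beginning of Section \ref{Pseudo-Gelfand}.

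With these (purely definitional) identifications in hand one has $\operatorname{Rad}(\mathcal{T}_\lambda^{\mathbf{1}})=\operatorname{Rad}(\cT_\lambda(\mathbf{d}))$ and $\widetilde{\mathcal{T}}_\lambda^{\mathbf{1}}\cap\operatorname{Rad}(\mathcal{T}_\lambda^{\mathbf{1}})=\widetilde{\cT_\lambda(\mathbf{d})}\cap\operatorname{Rad}(\cT_\lambda(\mathbf{d}))$, and then the density assertion of Theorem \ref{radical density} applied to $\theta=\mathbf{1}$ is verbatim the statement of the corollary. Accordingly there is no real obstacle to overcome here: the one point to verify is the identification of $\mathcal{T}_\lambda^{\mathbf{1}}$ and $\widetilde{\mathcal{T}}_\lambda^{\mathbf{1}}$ with $\cT_\lambda(\mathbf{d})$ and $\widetilde{\cT_\lambda(\mathbf{d})}$, after which the conclusion is immediate. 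I would close by observing that, combined with Proposition \ref{radical dense intersection Bergman}, this gives in each of the three cases listed there a fully explicit description of $\operatorname{Rad}(\cT_\lambda(\mathbf{d}))$ as the closure of the non-closed ideal generated by the operators \eqref{typical element radical Bergman}, thereby recovering and extending the corresponding results of \cite{Bauer2012,Bauer2013,Bauer2015}.
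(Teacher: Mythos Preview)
Your proposal is correct and matches the paper's own argument exactly: the corollary is stated immediately after Theorem \ref{radical density} with the single line ``Taking $\theta=\mathbf{1}$, we get'', and you have simply spelled out the (definitional) identification $\mathcal{T}_\lambda^{\mathbf{1}}=\cT_\lambda(\mathbf{d})$, $\widetilde{\mathcal{T}}_\lambda^{\mathbf{1}}=\widetilde{\cT_\lambda(\mathbf{d})}$ that makes this specialization work. Your closing remark also parallels the paper's concluding paragraph.
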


We recover as particular cases the characterizations of the radical for
the Banach algebras that were studied in
\cite{Bauer2012}, \cite{Bauer2015} (for the case $m=1$) and \cite{Rodriguez2021}, and
we obtain the corresponding missing result in
\cite{Garcia2015}.
Furthermore,
changing some notation and details, our method
can be adapted without much effort
to obtain an analogous characterization for the Banach algebras
generated by Toeplitz operators with quasi-radial quasi-homogeneous symbols
(for a general $m\geq1$),
giving thus a solution to the open problem mentioned at the end of Section 6 of
\cite{Bauer2015}.
In all these cases, the radical equals the closure of its intersection with
the non-closed dense subalgebra of finite sums of finite products of the generators.


\begin{thebibliography}{00000}

\bibitem{Bauer2013}
W. Bauer, N. Vasilevski, 
{\it On the structure of commutative Banach algebras
generated by Toeplitz operators on the unit ball.
Quasi-elliptic case. I: Generating subalgebras},
J. Funct. Anal. 265 (2013), 2956 - 2990.

\bibitem{Bauer2015} 
W. Bauer, N. Vasilevski, 
{\it On the structure of commutative Banach algebras generated by Toeplitz operators on the unit ball. Quasi-elliptic case. II: Gelfand Theory},  
Complex Anal. Oper. Theory 9 (2015), 593 - 630.

\bibitem{Bauer2012}
W. Bauer, N. Vasilevski, 
{\it On the structure of a commutative Banach algebra generated by Toeplitz
operators with quasi-radial quasi-homogeneous symbols}, 
Integr. Equ. Oper. Theory, 74 (2012), 199 - 231.




\bibitem{Bauer2012h}
W. Bauer, N. Vasilevski, 
{\it Banach algebras of commuting Toeplitz operators on the unit ball via the quasi-hyperbolic group},  
Oper. Theory Adv. Appl., 218 (2012) 155-175.

\bibitem{Bauer2012n}
W. Bauer, N. Vasilevski, 
{\it Commutative Toeplitz Banach algebras on the ball and quasi-nilpotent group action}, 
Integr. Equ. Oper. Theory 72 no. 2 (2012), 223 - 240.









\bibitem{Garcia2015}
A. Garcia, N. Vasilevski,
{\it Toeplitz Operators on the Weighted Bergman
Space over the Two-Dimensional Unit Ball}, J. Funct. Spaces (2015), Art. ID 306168, 10 pp. 






\bibitem{Vasilevski2007}
R. Quiroga-Barranco, N. Vasilevski, 
{\it Commutative ${C}^*$-algebras of Toeplitz operators on the unit ball, I.
Bargmann-type transforms and spectral representations of Toeplitz operators},
Integr. Equ. Oper. Theory (2007) 59, 379 - 419.


\bibitem{Rodriguez2021}
M. A. Rodriguez Rodriguez, N. Vasilveski,
{\it Toeplitz operators on the Hardy space with generalized pseudo-homogeneous symbols}, 
Complex Var. Elliptic Equ. 67 (2022), no. 3, 716 - 739.

\bibitem{Vasilevski2010}
N. Vasilevski, 
{\it Quasi-radial quasi-homogeneous symbols and commutative Banach algebras of Toeplitz operators},
Integr. Equ. Oper. Theory (2010) 66, 141 - 152.

\bibitem{Vasilevski20102}
N. Vasilevski, 
{\it Parabolic quasi-radial quasi-homogeneous symbols and commutative algebras of {T}oeplitz operators},
Operator Theory: Advances and Applications (2010) 202, 553-568.

\bibitem{Vasilevski2012}
N. Vasilevski,
{\it {B}anach algebras of commuting {T}oeplitz operators on the unit ball
via the quasi-hyperbolic group},
Operator Theory: Advances and Applications (2012) 218, 155-175.


\bibitem{Vasilevski2017}
N. Vasilevski,
{\it On Toeplitz operators with quasi-radial
and pseudo-homogeneous symbols}, 
Harmonic analysis, partial differential equations, Banach spaces, and operator theory. Vol. 2, 401 - 417, Assoc. Women Math. Ser., 5, Springer, Cham, (2017).


\end{thebibliography}

\end{document}